\documentclass[hidelinks,onefignum,onetabnum]{siamart250211}
\usepackage{lipsum}
\usepackage{amssymb,amsfonts,amstext,amsmath}
\usepackage{graphicx,geometry}
\usepackage{epstopdf}
\usepackage{algorithmic}
\ifpdf
  \DeclareGraphicsExtensions{.eps,.pdf,.png,.jpg}
\else
  \DeclareGraphicsExtensions{.eps}
\fi

\usepackage{cite,url}
\usepackage[title]{appendix}

\usepackage{multirow}
\usepackage{multicol}  

\usepackage{bbm}
\usepackage{xcolor}
\usepackage{float}


\usepackage{listings}
\usepackage{hyperref}


\newsiamremark{remark}{Remark}
\newsiamremark{hypothesis}{Hypothesis}
\crefname{hypothesis}{Hypothesis}{Hypotheses}
\newsiamthm{claim}{Claim}
\newsiamremark{fact}{Fact}
\crefname{fact}{Fact}{Facts}

\newcommand{\N}{\mathbb{N}}
\newcommand{\R}{\mathbb{R}}
\newcommand{\Rn}{\mathbb{R}^n}

\newcommand{\inner}[2]{\langle{#1},{#2}\rangle}

\newcommand{\norm}[1]{\|#1\|}

\newcommand{\J}{\mathcal{J}}

\newcommand{\ds}{\displaystyle}


 %

\newcommand{\bi}{\begin{itemize}}
\newcommand{\ei}{\end{itemize}}
\newcommand{\ba}{\begin{array}}
\newcommand{\ea}{\end{array}}

\headers{A Cubic Regularization Method for Multiobjective Optimization}{Douglas S. Gon\c{c}alves, Max L. N. Gon\c{c}alves, Jefferson G. Melo}

\title{A Cubic Regularization Method for Multiobjective Optimization\thanks{Submitted to the editors DATE.
\funding{This work was funded by FAPESC (Fundação de Amparo à Pesquisa e Inovação do Estado de Santa Catarina) [grant number 2024TR002238].}}}


\author{Douglas S. Gon\c{c}alves\thanks{CFM, Universidade Federal de Santa Catarina, Florian\'opolis, SC 88040-900, Brazil. (\email{       douglas.goncalves@ufsc.br}). The work of this author was supported in part by CNPq (Conselho Nacional de Desenvolvimento Cient\'{i}fico e Tecnol\'ogico)  Grant 305213/2021-0.}
  \and  Max L. N. Gon\c calves
  \thanks{IME, Universidade Federal de Goi\'as, Goi\^ania, GO 74001-970, Brazil. ( \email{maxlng@ufg.br} and \email{jefferson@ufg.br}). The work of these authors was
    supported in part by CNPq (Conselho Nacional de Desenvolvimento Cient\'{i}fico e Tecnol\'ogico) Grants  405349/2021-1,  304133/2021-3 and  312223/2022-6.}
   \and Jefferson G.  Melo\footnotemark[2]}

\usepackage{amsopn}





\begin{document}

\maketitle

\begin{abstract}
This work introduces a new cubic regularization  method for solving  nonconvex unconstrained multiobjective optimization problems. At each iteration of the method, a model associated with the cubic regularization of each component of the objective function is  minimized. This model allows approximations for the first- and second-order derivatives, which must  satisfy suitable error conditions.
One interesting feature of the proposed algorithm is that  the regularization parameter of the model and the accuracy of the derivative approximations are jointly adjusted using a nonmonotone line search criterion. 
Implementations of the method, where derivative information is computed using finite difference strategies, are discussed.
From a theoretical perspective, it is shown that, under the assumption that the Hessians of the components of the multiobjective function  are globally Lipschitz continuous, the method requires at most
 $\mathcal{O}(C\epsilon^{-3/2})$ iterations to generate an $\epsilon$-approximate Pareto critical, where $C$ is related to the Lipschitz constant and the inexactness parameters. 
In particular, if the first- and second-order derivative information is computed using finite differences based solely on function values, the method requires at most \(\mathcal{O}(n^{1-\beta}\epsilon^{-3/2})\) iterations, corresponding to  \(\mathcal{O}\left(mn^{3-\beta}\varepsilon^{-\frac{3}{2}}\right)\) function evaluations, where \(n\) is the dimension of the domain of the objective function, \(m\) is the number of objectives, and \(\beta \in [0,1]\) is a  constant associated with the stepsize used in the finite-difference approximation.
We further discuss the global convergence and local convergence rate of the method. Specifically, under the local convexity assumption,  we show that the method achieves superlinear convergence when the first derivative is computed exactly, and quadratic convergence when both first- and second-order derivatives are exact.
From a practical perspective,  numerical experiments are presented to illustrate the behavior of the proposed method and compare it with a multiobjective Newton method.
\end{abstract}

\begin{keywords}
Cubic regularization method;   iteration-complexity analysis;
convergence rate; Pareto optimality;  multiobjective problem.
\end{keywords}

\begin{MSCcodes}
49M15, 65K05, 90C26, 90C29.
\end{MSCcodes}

\section{Introduction} \label{sec:introd}

Consider the unconstrained multiobjective optimization problem
\begin{equation} \label{vectorproblem}
\min_{x \in \R^n}    F(x),  \\
\end{equation}
where $\allowbreak F\colon \R^n \rightarrow \mathbb{R}^m$ is twice continuously differentiable. Our analysis will be conducted under the following assumptions:
for every $j\in \J:=\{1,\ldots,m\}$,
\begin{itemize}
\item[\textbf{(A1)}] the Hessian of $F_j$ is $L_j$-Lipschitz continuous, i.e., 
\begin{equation*}
\|\nabla^{2}F_j(y)-\nabla^{2}F_j(z)\|\leq L_j\|y-x\|,\quad \forall y,z\in\mathbb{R}^{n};
\end{equation*}
(For convenience, we denote $L:=\max_{j\in \J} L_j.$)
\item[\textbf{(A2)}]  there exists $F_j^*\in\mathbb{R}$ such that $F_j(y)\geq F_j^*$ for all $y\in\mathbb{R}^{n}$.
\end{itemize}

In multiobjective optimization, the concept of optimality is replaced by Pareto optimality or efficiency. A point \( x^{\ast} \in \mathbb{R}^n \) is considered  Pareto optimal or efficient if there is no other point \( x \in \mathbb{R}^n \) such that \( F(x) \leq F(x^{\ast}) \) and \( F(x) \neq F(x^{\ast}) \), where the inequality \( \leq \) is understood component-wise. Similarly, a point \( x^{\ast} \in \mathbb{R}^n \) is called weakly Pareto optimal or weakly efficient if there is no point \( x \in \mathbb{R}^n \) such that \( F(x) < F(x^{\ast}) \). 
A point \( x^{\ast} \in \mathbb{R}^n \) is locally Pareto optimal (or locally weakly Pareto optimal) if there is a neighborhood \( V \subset \mathbb{R}^n \) around \( x^{\ast} \) where \( x^{\ast} \) is Pareto optimal (or weakly Pareto optimal) for \( F \) restricted to \( V \). {A necessary condition for local Pareto optimality of \( x^{\ast} \in \mathbb{R}^n \) is given by:
\begin{equation}\label{CQ} 
{    \max_{j\in \J}\langle \nabla F_j(x^*),y-x^*\rangle \geq 0, \quad \forall y\in \R^n.}
\end{equation}
A point \( x^{\ast} \in \mathbb{R}^n \) that satisfies this condition is called Pareto critical or stationary. 
Consequently, if a point \( x \) is not Pareto critical, there exists a vector \( y \in \mathbb{R}^n \) such that $y-x$ is a descent direction for \( F \) at \( x \), meaning that there exists \( \varepsilon > 0 \) such that \( F(x + t(y-x)) < F(x) \) for any \( t \in (0, \varepsilon] \).
}

In this work,  we propose and analyze a multiobjective cubic regularization  method (M-CRM) for solving  \eqref{vectorproblem}. Basically, at each iteration of the method, a model associated with the cubic regularization of each component of the objective function is minimized. More specifically, the next iterate 
 $(x^{+}, \gamma^{+}, \lambda^{+}) \in \R^n\times \R \times \R^m_+$
is an approximate KKT point of the following smooth optimization  problem with cubic objective function and quadratic constraints
\begin{equation} \label{newtonproblem}
 \begin{array}{cl}
\ds\min_{(y,\gamma)\in\R^n\times\R}    & \;  \gamma    + \frac{\sigma}6\|y-x\|^3      \\
\mbox{s. t.} & \inner{\nabla \bar F_j (x)}{y-x} + \ds\frac{1}{2}\inner{\nabla^2 \bar F_j (x)(y-x)}{y-x}  \leq \gamma,  \quad \forall j \in \J, 
\end{array}
\end{equation}
where $x\in \R^n$,  $\sigma>0$ is a regularization parameter, and $\nabla \bar F_j(x)$ and  $\nabla^{2}\bar F_j(x)$ are  approximations  to the gradient $\nabla F_j(x)$ and the Hessian $\nabla^{2}F_j(x)$, respectively.
Note that, this smooth problem is equivalent to the non-smooth scalar-value one
 \begin{equation}\label{subprob:CubicNew-def:M-xsigma}
\min_{y\in\R^n} M_{x,\sigma}(y):=\max_{j\in \J} M_{x,\sigma}^j(y), 
\end{equation}
where 
\begin{equation}\label{def:M-xsigma-j} 
M_{x,\sigma}^j(y):=\langle \nabla \bar F_j(x),y-x\rangle+\dfrac{1}{2}\langle \nabla^{2}\bar F_j(x) (y-x),y-x\rangle+\dfrac{\sigma}{6}\|y-x\|^{3}, \;   \forall j \in \J.
\end{equation}
It is worth pointing out that the exact conditions for the derivative approximations in the M-CRM, specifically the inequalities \eqref{ineq:ApproxGrad-Hess} below, depend only on the previous information, and hence they are implementable; for further discussion on this issue in the scalar case, see, for example, \cite{11850, WANG2019146, Geovanni23}.
Implementations of the method, where information about derivatives are computed through finite difference strategies, are presented.
One interesting feature of the M-CRM is that the regularization parameter $\sigma$ and the accuracy of the derivative approximations are jointly adjusted using a nonmonotone line search criterion.

From a theoretical perspective, it is shown that the M-CRM requires at most \(\mathcal{O}(C\epsilon^{-3/2})\) iterations  to generate an \(\epsilon\)-approximate Pareto critical point, where \(C\) is related to the Lipschitz constant \(L\), the values $F^*_j,$ the initial points and the inexactness parameters. 
In particular, if the gradients are exact and the Hessians  are computed by finite differences based on gradient values,
 the M-CRM requires at most \(\mathcal{O}\left(n^{(1-2\beta)/2}\varepsilon^{-\frac{3}{2}}\right)\) iterations, and \(\mathcal{O}\left(mn^{(3-2\beta)/2}\varepsilon^{-\frac{3}{2}}\right)\)  function and gradient evaluations, to generate an \(\varepsilon\)-approximate Pareto critical point for problem~\eqref{vectorproblem} (see Definiton~\eqref{def:aprSol}), where  \(\beta \in [0,1/2]\) is a constant associated with the stepsize used in the finite-difference approximation. Additionally, 
  if the first- and second-order derivative information is computed using finite differences based solely on function values, the method requires at most \(\mathcal{O}(n^{1-\beta}\epsilon^{-3/2})\) iterations, corresponding to \ \(\mathcal{O}\left(mn^{3-\beta}\varepsilon^{-\frac{3}{2}}\right)\) function evaluations, where  \(\beta \in [0,1]\).
We further discuss the global convergence and local convergence rate of the method. Specifically, under local convexity assumption,  we show that the method achieves superlinear convergence when the first derivatives are computed exactly, and quadratic convergence when both first- and second-order derivatives are exact.
 From a practical perspective, numerical experiments on standard test sets from the literature are presented to illustrate the behavior of the approach, with comparisons to the  multiobjective Newton method with safeguards  from \cite{gonccalves2021globally}.


As is well-known in the scalar context (i.e., \( m = 1 \) in problem \eqref{vectorproblem}), the CRM is a globally convergent variant of Newton method for the unconstrained minimization of twice continuously differentiable functions \cite{GRI, NP} (see also \cite{Birgin1, Geovanni23, 11850, WANG2019146,BEL5,CGT1,10.1093,MaxSubsampled,Geovanizero,louzeiro2024} for some recent variants of the CRM). 
{ In the multiobjective setting, a \(p\)-order regularization framework for constrained multiobjective optimization was introduced in \cite{MartinezMult}. Iteration-complexity results were established under the assumption that the \(p\)-th derivatives of the objective components satisfy Hölder continuity conditions. However, it is important to note that the M-CRM does not fall within the scope of the framework proposed in \cite{MartinezMult}, as the sequence \(\{x_t\}\) generated by M-CRM does not necessarily satisfy assumptions (3) and (4) therein.
Another key distinction lies in the linesearch procedures: the method in \cite{MartinezMult} relies on a tolerance parameter \(\varepsilon > 0\), which must be chosen in advance to guarantee sufficient decrease in the objective functions. In contrast, our algorithm does not require such a parameter and does not enforce monotonicity in the functional values.
Moreover, our work establishes asymptotic convergence and provides a thorough local convergence analysis of the M-CRM, aspects that are not addressed in the framework of \cite{MartinezMult}.}
We also mention that a number of works discussing iterative methods for solving    multiobjective/vectorial problems have been recently developed. Papers dealing with this subject include, but are not limited to 
\cite{benar&fliege,bonnel2005proximal,mauriciobenar&fliege,fukuda2011convergence,fukuda2013inexact,mauricio&benar,luis-jef-yun,Glaydston1,fukudaProxGradMultiobjective,Ray1,acceProxGradMultiobjective,Leandro2022,gonccalves2021globally,cgvo,M-AFW}.

The paper is organized as follows: Section~\ref{sec:prelim}  contains  the key theoretical results that form the foundation of the proposed algorithm.
 Section~\ref{sec:MCRM} formally describes the M-CRM to solve \eqref{vectorproblem} and establishes its well-defineness. The 
  iteration-complexity and global analysis  are discussed in \ref{Sectio:iteration}, whereas   Section~\ref{sec:local} discusses local properties of the M-CRM. Approximations of gradient and Hessian of the function component $F_j$ by  using finite difference strategies are discussed in Section~\ref{sec:approximation}, and the proofs of the related results are given in the Appendix.
Section \ref{sec:numerical} contains some  numerical experiments.  
 Final remarks are given in Section~\ref{sec:final}.

\section{Preliminary material} \label{sec:prelim}

In this section, we present the key  results that form the foundation of the proposed algorithm. These results are essential for understanding both the structure of the algorithm and its theoretical analysis.

We begin by noting, from \eqref{CQ}, that \( x \) is a Pareto critical point of \eqref{vectorproblem} if and only if it is a minimum point of \( \phi(y) := \max_j \langle \nabla F_j(x), y - x \rangle \), whose the optimal value is zero. Hence, since \( \phi \) is a convex function given by the maximum of affine functions \( \phi_j(y) := \langle \nabla F_j(x), y - x \rangle \), with gradient \( \nabla F_j(x) \), it follows that \( x \) is a Pareto critical point if and only if $0$ belongs to the subdifferential of $\phi$, which is equivalent to the existence of \( \lambda \in \mathbb{R}^m_+ \) such that \( \sum_{j \in \J} \lambda_j = 1 \) and \( \sum_{j \in \J} \lambda_j \nabla F_j(x) = 0 \). 
This motivates the following concept of an approximate Pareto critical point.


\begin{definition}\label{def:aprSol}
Given a tolerance \(\varepsilon > 0\), we say that a point \(\bar{x} \in \mathbb{R}^n\) is an \(\varepsilon\)-approximate Pareto critical point of~\eqref{vectorproblem} if and only if there exists \(\bar{\lambda} \in \mathbb{R}^m_+ \) such that \( \sum_{j \in \J} \bar \lambda_j = 1 \) and \(\left\|\sum_{j \in \J} \bar{\lambda}_j \nabla F_j(\bar{x})\right\| \leq~\varepsilon\).
\end{definition}

Since problems~\eqref{newtonproblem} and \eqref{subprob:CubicNew-def:M-xsigma} are equivalent, and  the objective function of  problem~\eqref{subprob:CubicNew-def:M-xsigma} is coercive, they have a solution. 
 Additionally, problem~\eqref{newtonproblem} satisfies the Mangasarian-Fromovitz constraint qualification; for example,   $d=(0,1)$ is a strictly feasible direction.
 Thus,  there exists a multiplier $\lambda^+ \in \mathbb{R}_+^m$ such that the triple $(y,\gamma,\lambda) :=(y^+,\gamma^+,\lambda^+) \in\R^n\times \R\times\R_+^m$ satisfies the  Karush-Kuhn-Tucker conditions of \eqref{newtonproblem},  given by:
\begin{equation}\label{eq:67}
\sum_{j=1}^{m} \lambda_j=1, \quad \sum_{j=1}^{m} \lambda_j\left[\nabla \bar F_j (x) + \nabla^2 \bar F_j (x)(y-x)\right]+ \frac{\sigma}2\|y-x\|(y-x)=0,
\end{equation}
and, for all $j\in \J$,
\begin{equation*}\inner{\nabla \bar F_j (x)}{(y-x)} + \frac{1}{2}\inner{\nabla^2 \bar F_j (x)(y-x)}{y-x} \leq \gamma,
\end{equation*}
\begin{equation*}
\lambda_j\left[\inner{\nabla \bar F_j (x)}{y-x} + \frac{1}{2}\inner{\nabla^2 \bar F_j (x)(y-x)}{y-x}-\gamma\right]=0.
\end{equation*}
Note that, in view of the latter two relations,   $\gamma^+$ is uniquely determined in terms of  $(x^+,\lambda^+)$. Hence, we will consider only the pair $(x^+,\lambda^+)$ when referring to an approximate KKT solution of  problem~\eqref{newtonproblem}. Indeed,  in our algorithm,  we will consider an approximate KKT pair $(x^{+},\lambda^{+}) \in \R^n\times \R^m_+$ of problem~\eqref{newtonproblem} 
in the sense that 
\begin{equation}\label{ineq:subprobInex1}
M_{x,\sigma}(x^+)\leq 0, \quad\sum_{j=1}^{m} \lambda^+_j=1, \quad \left \|\sum_{j=1}^{m} \lambda^+_j\nabla M_{x,\sigma}^j(x^+)\right\|\leq \theta \|x^+-x\|^2,
\end{equation}
for some $\theta\geq 0$, where $M_{x,\sigma}$ is as in \eqref{subprob:CubicNew-def:M-xsigma}. 

Next, we review some well known properties of functions whose  Hessians are Lipschitz continuous. In view of  {\bf (A1)}, it can be shown that for all \( j \in \J \) and \( x, y \in \mathbb{R}^{n} \),
\begin{equation}
|F_j(y)- F_j(x)-\langle\nabla F_j(x),y-x\rangle-\dfrac{1}{2}\langle\nabla^{2}F_j(x)(y-x),y-x\rangle|\leq\dfrac{L_j}{6}\|y-x\|^{3},
\label{eq:2.2}
\end{equation}
and
\begin{equation}
\|\nabla F_j(y)-\nabla F_j(x)-\nabla^{2}F_j(x)(y-x)\|\leq\dfrac{L_j}{2}\|y-x\|^{2}.
\label{eq:2.3}
\end{equation}


We now establish a key result. 
\begin{lemma}\label{lem:basic1} 
Let $x\in \R^n$ be given and  let  $\nabla \bar F_j(x)$ and $\nabla^2 \bar F_j(x)$ be approximations of the gradient $\nabla F_j(x)$ and the Hessian $\nabla^2 F_j(x)$, respectively. Define the following residuals  
\begin{equation}\label{definition-approxGrad-Hessian}
E^G_{j}(x):=\nabla F_j(x)-\nabla\bar F_j(x),  \quad E^H_{j}(x):=\nabla^{2}F_j(x)-\nabla^{2}\bar F_j(x), \quad \forall j\in \J.
\end{equation}
 Assume that {\bf (A1)} holds and let a pair $(x^+,\sigma)\in \R^n\times\R_{++}$ be such that 
$M_{x,\sigma}(x^{+})\leq 0,$  where $M_{x,\sigma}$ is as in  \eqref{subprob:CubicNew-def:M-xsigma}.
Then, for any $\kappa_G >0$ and $j \in \J$, there holds
\begin{align}\label{ineq1:auxlem1}
F_j(x^{+})\leq  F_j(x)+ \|E^G_j(x)\|\|x^+- x\|+\frac{1}{2}\left\|E^H_j(x)\right\|\|x^{+}-x\|^2+\frac{(L-\sigma)}{6}\|x^{+}-x\|^{3}.
\end{align}
\end{lemma}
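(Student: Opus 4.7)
The plan is to chain three standard ingredients: the cubic Taylor-type upper bound coming from Lipschitz continuity of the Hessian, the decomposition of the exact derivatives into their approximations plus residuals, and the nonpositivity of the model at $x^+$. The output is exactly the claimed inequality once one bounds the error terms by Cauchy--Schwarz.

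Concretely, I would first apply \eqref{eq:2.2} at $y = x^+$ to obtain
\begin{equation*}
F_j(x^+) \leq F_j(x) + \langle \nabla F_j(x), x^+-x\rangle + \tfrac{1}{2}\langle \nabla^2 F_j(x)(x^+-x), x^+-x\rangle + \tfrac{L_j}{6}\|x^+-x\|^3.
\end{equation*}
Using the residual definitions \eqref{definition-approxGrad-Hessian} to substitute $\nabla F_j(x) = \nabla \bar F_j(x) + E^G_j(x)$ and $\nabla^2 F_j(x) = \nabla^2 \bar F_j(x) + E^H_j(x)$, the right-hand side splits into a \emph{model part}
\begin{equation*}
\langle \nabla \bar F_j(x), x^+-x\rangle + \tfrac{1}{2}\langle \nabla^2 \bar F_j(x)(x^+-x), x^+-x\rangle
\end{equation*}
plus two error terms plus $\tfrac{L_j}{6}\|x^+-x\|^3$. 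By the definition \eqref{def:M-xsigma-j}, the model part equals $M^j_{x,\sigma}(x^+) - \tfrac{\sigma}{6}\|x^+-x\|^3$. Since $M_{x,\sigma}(x^+) = \max_{k\in\J} M^k_{x,\sigma}(x^+) \leq 0$, we get $M^j_{x,\sigma}(x^+) \leq 0$ for every $j$, so the model part is bounded above by $-\tfrac{\sigma}{6}\|x^+-x\|^3$. This is where the hypothesis $M_{x,\sigma}(x^+)\leq 0$ is used, and it is precisely what turns $+L_j/6$ into $(L-\sigma)/6$ after also replacing $L_j$ by its upper bound $L$.

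It remains to bound the residual contributions. Cauchy--Schwarz gives $\langle E^G_j(x), x^+-x\rangle \leq \|E^G_j(x)\|\|x^+-x\|$, and the operator-norm inequality gives $\tfrac{1}{2}\langle E^H_j(x)(x^+-x), x^+-x\rangle \leq \tfrac{1}{2}\|E^H_j(x)\|\|x^+-x\|^2$. Collecting all terms yields \eqref{ineq1:auxlem1}.

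There is no real obstacle here: the proof is essentially bookkeeping once one sees that the Lipschitz Hessian bound plus the definition of $M^j_{x,\sigma}$ is designed exactly so that the model part, together with the cubic remainder, absorbs the $L_j/6$ term via the regularizer. The only minor puzzle is the "for any $\kappa_G > 0$" quantifier in the statement, which does not actually appear on the right-hand side of \eqref{ineq1:auxlem1}; it seems to be vestigial (perhaps a remnant of a Young's-inequality variant that would split $\|E^G_j(x)\|\|x^+-x\|$ into $\|E^G_j(x)\|^2/(2\kappa_G) + (\kappa_G/2)\|x^+-x\|^2$), but my proof does not need to invoke it.
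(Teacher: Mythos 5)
Your proof is correct and follows essentially the same route as the paper's: Taylor-type bound \eqref{eq:2.2} at $y=x^+$, substitution of the residual decompositions to expose $M^j_{x,\sigma}(x^+)$ (the paper then bounds it by $M_{x,\sigma}(x^+)\leq 0$ via the max, which is the same step as your componentwise $M^j_{x,\sigma}(x^+)\leq 0$), Cauchy--Schwarz and the operator-norm bound, and finally $L_j\leq L$. Your observation that the quantifier ``for any $\kappa_G>0$'' plays no role in the inequality is also accurate --- the paper's own proof never uses it.
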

\begin{proof}
It follows from  \eqref{eq:2.2} with $y=x^+$ and the definitions of $M^j_{x,\sigma}$, and $E^G_j(x)$ and $E^H_j(x)$ in  \eqref{def:M-xsigma-j} and \eqref{definition-approxGrad-Hessian} that 
\begin{align}
F_j(x^{+})\leq& F_j(x)+\langle\nabla F_j(x),x^{+}-x\rangle+\frac{1}{2}\langle\nabla^{2}F_j(x)(x^{+}-x),x^{+}-x\rangle+\frac{L_j}{6}\|x^{+}-x\|^{3}\nonumber\\
=&  F_j(x)+ M^j_{x,\sigma}(x^{+})+ \langle E^G_j(x),x^{+}-x\rangle \nonumber\\
 &+\frac{1}{2}\left\langle  E^H_j(x)(x^{+}-x),x^{+}-x\right\rangle+\frac{L_j-\sigma}{6}\|x^{+}-x\|^{3}.\nonumber
\end{align}
Hence, using Cauchy-Schwarz inequality, property of norms operator and the definition of $M_{x,\sigma}$  in  \eqref{subprob:CubicNew-def:M-xsigma},    we have 
 \begin{align*}
F_j(x^{+})\leq &  F_j(x)+ M_{x,\sigma}(x^{+})+ \|E^G_j(x)\|\|x^{+}-x\|  +\frac{1}{2}\left\|E^H_j(x)\right\|\|x^{+}-x\|^2+\frac{L_j-\sigma}{6}\|x^{+}-x\|^{3}.
\end{align*}
Therefore, \eqref{ineq1:auxlem1} follows from the above inequality, $M_{x,\sigma}(x^{+})\leq 0$ for all $j\in \J$ and $L=\max_{j\in \J} L_j$.
\end{proof}

The next lemma provides a recursive formula for the functional value \( F_j(x^{+}) \), assuming that the errors in the gradient and Hessian of \( F_j \) at the current point—i.e., \( \|E^G_j(x)\| \) and \( \|E^H_j(x)\| \)—are controlled by quantities depending on two parameters, \(\kappa_G\) and \(\kappa_H\), and a pair \((\hat{x}, x)\), where \(\hat{x}\) represents the previous iterate in our algorithm. It is important to note that this error condition will be iteratively ensured in our algorithm through a finite difference procedure.

\begin{corollary}
\label{lem:2.2}
 Let $(x,\hat x)\in \R^n\times\R^n$ be given and assume that  $E^G_{j}(x)$ and $E^H_{j}(x)$ as  in \eqref{definition-approxGrad-Hessian} satisfy 
\begin{equation}\label{definition-approxGrad-Hessian2}
\|E^G_j(x)\|\leq\kappa_{G}\|x-\hat{x}\|^2,  \quad \|E^H_j(x)\|\leq\kappa_{H}\|x-\hat{x}\|, \quad \forall j\in \J,
\end{equation}
for some  $\kappa_{G}, \kappa_{H} \geq 0$. 
 Assume that {\bf (A1)} holds and let a pair $(x^+,\sigma)\in \R^n\times\R_{++}$ be such that 
$ 
M_{x,\sigma}(x^{+})\leq 0
$
and
\begin{equation}
\sigma\geq 2[L+3\rho(2\kappa_{G}+\kappa_{H})],
\label{ineq:threshold-sigma}
\end{equation}
for some $\rho\geq 1$. Then, the following inequality holds
\begin{equation}\label{ineq:RecursionF(x)}
F_j(x^{+})-F_j(x)\leq\dfrac{\sigma}{12\rho}\|x-\hat{x}\|^{3}-\dfrac{\sigma}{12}\|x^{+}-x\|^{3}, \quad \forall j \in \J.
\end{equation}
\end{corollary}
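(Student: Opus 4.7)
The plan is to start from Lemma~\ref{lem:basic1}, plug in the approximation-error bounds \eqref{definition-approxGrad-Hessian2}, and then split the cross terms $\|x-\hat{x}\|^{2}\|x^{+}-x\|$ and $\|x-\hat{x}\|\|x^{+}-x\|^{2}$ into pure cubes using Young's inequality (equivalently, three-term AM-GM). The threshold~\eqref{ineq:threshold-sigma} is then exactly what is needed to absorb the resulting coefficients.

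\textbf{Step 1.} Apply Lemma~\ref{lem:basic1}, whose hypotheses are met since $M_{x,\sigma}(x^{+})\le 0$ and {\bf (A1)} holds. Using \eqref{definition-approxGrad-Hessian2} to bound $\|E^{G}_{j}(x)\|$ and $\|E^{H}_{j}(x)\|$ in \eqref{ineq1:auxlem1}, one obtains
\begin{equation*}
F_j(x^{+})-F_j(x)\leq \kappa_{G}\|x-\hat{x}\|^{2}\|x^{+}-x\|+\tfrac{\kappa_{H}}{2}\|x-\hat{x}\|\|x^{+}-x\|^{2}+\tfrac{L-\sigma}{6}\|x^{+}-x\|^{3}.
\end{equation*}

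\textbf{Step 2.} Apply the three-term AM-GM inequality (or Young's inequality with exponents $3/2$ and $3$) to each mixed term:
\begin{equation*}
\|x-\hat{x}\|^{2}\|x^{+}-x\|\leq \tfrac{2}{3}\|x-\hat{x}\|^{3}+\tfrac{1}{3}\|x^{+}-x\|^{3},
\end{equation*}
\begin{equation*}
\|x-\hat{x}\|\|x^{+}-x\|^{2}\leq \tfrac{1}{3}\|x-\hat{x}\|^{3}+\tfrac{2}{3}\|x^{+}-x\|^{3}.
\end{equation*}
Substituting and collecting like terms yields
\begin{equation*}
F_j(x^{+})-F_j(x)\leq \tfrac{4\kappa_{G}+\kappa_{H}}{6}\|x-\hat{x}\|^{3}+\tfrac{2\kappa_{G}+2\kappa_{H}+L-\sigma}{6}\|x^{+}-x\|^{3}.
\end{equation*}

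\textbf{Step 3.} It remains to verify that the condition \eqref{ineq:threshold-sigma}, namely $\sigma\ge 2L+6\rho(2\kappa_{G}+\kappa_{H})$ with $\rho\ge 1$, implies both
\begin{equation*}
\tfrac{4\kappa_{G}+\kappa_{H}}{6}\leq \tfrac{\sigma}{12\rho},\qquad \tfrac{2\kappa_{G}+2\kappa_{H}+L-\sigma}{6}\leq -\tfrac{\sigma}{12}.
\end{equation*}
The first is equivalent to $\sigma\geq 2\rho(4\kappa_{G}+\kappa_{H})$, which follows from $12\rho\kappa_G\ge 8\rho\kappa_G$ and $6\rho\kappa_H\ge 2\rho\kappa_H$. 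The second rearranges to $\sigma\geq 2L+4\kappa_{G}+4\kappa_{H}$, which holds since $\rho\ge 1$ gives $12\rho\kappa_G\ge 4\kappa_G$ and $6\rho\kappa_H\ge 4\kappa_H$. Combining these bounds with the previous display delivers \eqref{ineq:RecursionF(x)}.

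The proof is essentially routine; the only subtlety is the coefficient bookkeeping in Step~3, where one has to confirm that the seemingly asymmetric threshold $2[L+3\rho(2\kappa_{G}+\kappa_{H})]$ is simultaneously large enough to produce the $\sigma/(12\rho)$ coefficient on $\|x-\hat{x}\|^{3}$ and to leave a residual of $-\sigma/12$ on $\|x^{+}-x\|^{3}$. The factors $2$ and $3\rho$ in \eqref{ineq:threshold-sigma} are precisely what is needed to make both inequalities of Step~3 hold simultaneously after the Young splitting.
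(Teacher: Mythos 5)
Your proof is correct and takes essentially the same route as the paper's: invoke Lemma~\ref{lem:basic1}, insert the error bounds \eqref{definition-approxGrad-Hessian2}, convert the mixed terms into pure cubes via AM--GM, and check that the threshold \eqref{ineq:threshold-sigma} absorbs the resulting coefficients. The only (immaterial) difference is that you use the sharp Young splits $a^{2}b\le \tfrac{2}{3}a^{3}+\tfrac{1}{3}b^{3}$ and $ab^{2}\le \tfrac{1}{3}a^{3}+\tfrac{2}{3}b^{3}$, whereas the paper uses the cruder bound $a^{2}b\le a^{3}+b^{3}$ and then invokes $\rho\ge 1$ to redistribute the coefficients; your Step~3 bookkeeping is accurate, so both variants close the argument.
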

\begin{proof}
In view of Lemma~\ref{lem:basic1}, \eqref{definition-approxGrad-Hessian2},  and the fact that $a^2b\leq a^3+b^3$ for any $a,b\geq 0$, we have 
\begin{align*}
F_j(x^{+})- F_j(x)&\leq  \|E^G_j(x)\|\|x^+-x\|+\frac{1}{2}\|E^H_j(x)\|\|x-x^+\|^2+\frac{(L-\sigma)}{6}\|x^{+}-x\|^{3}\\
&\leq \kappa_{G}\|x-\hat{x}\|^2\|x^+-x\|+ \frac{\kappa_{H}}{2}\|x-\hat{x}\|\|x-x^+\|^2+\frac{(L-\sigma)}{6}\|x^{+}-x\|^{3}\nonumber\\
&\leq  \kappa_G\|x-\hat x\|^{3}+\frac{\kappa_H}{2}\|x-\hat x\|^3+\frac{(L+6\kappa_G+3\kappa_H-\sigma)}{6}\|x^{+}-x\|^{3}\nonumber\\
&\leq \frac{(2\kappa_G+\kappa_H)}{2}\|x-\hat x\|^{3} +\frac{[L-\sigma+3\rho(2\kappa_G+\kappa_H)]}{6}\|x^{+}-x\|^{3},
\end{align*}
where the last inequality is due to the fact that  $\rho\geq 1$.
Therefore, \eqref{ineq:RecursionF(x)} follows from the above inequality and by noting that \eqref{ineq:threshold-sigma} implies that 
\[
L-\sigma+3\rho(2\kappa_G+\kappa_H)\leq -\frac{\sigma}{2}, \qquad 
2\kappa_{G}+\kappa_{H}\leq \dfrac{\sigma}{6\rho}.
\]
\end{proof}

Our next result essentially shows that by computing an approximate KKT pair $(x^+,\lambda^+)$ of the cubic approximation subproblem as in \eqref{ineq:subprobInex1}, we can control the convex combination of the gradients of the components of the multiobjective function $\sum_{j \in \J}\lambda^+_j\nabla F_j(x^{+})$ in terms of the residuals defined in \eqref{definition-approxGrad-Hessian} and the displacement $\|x^+-x\|$.

\begin{lemma}
\label{lem3:basic-gt+1<Deltax}
Let $x\in \R^n$ be given and consider $E^G_{j}(x)$ and  $E^H_{j}(x)$ as in  \eqref{definition-approxGrad-Hessian}. Assume that {\bf (A1)} holds and let a triple $(x^{+},\lambda^{+},\sigma) \in \R^n\times \R^m_+\times\R_{++} $ be such that $\sum_{j=1}^{m} \lambda^+_j=1$ and $\left\|\sum_{j \in \J}\lambda^+_j\nabla M_{x,\sigma}^j(x^{+})\right\|\leq \theta\|x^+- x\|^2$, for some $\theta\geq 0$. Then, we have   
\begin{equation*}
\left\|\sum_{j \in \J}\lambda^+_j\nabla F_j(x^{+})\right\|\leq \frac{L+\sigma+2\theta}{2}\|x^{+}-x\|^2+\max_j\left(\|E_j^G(x)\|+ {\|E_j^H(x)\|\|x^{+}-x\|}\right). 
\end{equation*} 
\end{lemma}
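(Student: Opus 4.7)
The plan is to decompose $\sum_j \lambda_j^+\nabla F_j(x^+)$ into four manageable pieces, one of which is the quantity we control by the approximate KKT hypothesis, and then bound each piece using either \eqref{eq:2.3}, the definitions of $E^G_j$ and $E^H_j$, or the assumption $\sum_j \lambda_j^+=1$.

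First I would compute the gradient of $M^j_{x,\sigma}$ from \eqref{def:M-xsigma-j}, namely
\begin{equation*}
\nabla M^j_{x,\sigma}(x^+) = \nabla \bar F_j(x) + \nabla^2\bar F_j(x)(x^+-x) + \tfrac{\sigma}{2}\|x^+-x\|(x^+-x),
\end{equation*}
and then substitute $\nabla \bar F_j(x)=\nabla F_j(x)-E^G_j(x)$ and $\nabla^2\bar F_j(x)=\nabla^2 F_j(x)-E^H_j(x)$ coming from \eqref{definition-approxGrad-Hessian}. Solving for $\nabla F_j(x)+\nabla^2 F_j(x)(x^+-x)$ and adding $\nabla F_j(x^+)-\nabla F_j(x)-\nabla^2 F_j(x)(x^+-x)$ to both sides produces the identity
\begin{equation*}
\nabla F_j(x^+) = R_j + \nabla M^j_{x,\sigma}(x^+) + E^G_j(x) + E^H_j(x)(x^+-x) - \tfrac{\sigma}{2}\|x^+-x\|(x^+-x),
\end{equation*}
where $R_j:=\nabla F_j(x^+)-\nabla F_j(x)-\nabla^2 F_j(x)(x^+-x)$.

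Next I would multiply by $\lambda_j^+$ and sum over $j\in\J$. Using $\sum_j\lambda_j^+=1$, the last term simplifies to $-\tfrac{\sigma}{2}\|x^+-x\|(x^+-x)$. Applying the triangle inequality yields
\begin{equation*}
\left\|\sum_{j\in\J}\lambda_j^+\nabla F_j(x^+)\right\| \le \sum_{j\in\J}\lambda_j^+\|R_j\| + \left\|\sum_{j\in\J}\lambda_j^+\nabla M^j_{x,\sigma}(x^+)\right\| + \tfrac{\sigma}{2}\|x^+-x\|^2 + S,
\end{equation*}
where $S:=\sum_{j\in\J}\lambda_j^+\bigl(\|E^G_j(x)\|+\|E^H_j(x)\|\|x^+-x\|\bigr)$.

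Finally I would finish by invoking \eqref{eq:2.3} (with $L_j\le L$) to get $\|R_j\|\le\tfrac{L}{2}\|x^+-x\|^2$, so $\sum_j\lambda_j^+\|R_j\|\le\tfrac{L}{2}\|x^+-x\|^2$; the hypothesis $\|\sum_j\lambda_j^+\nabla M^j_{x,\sigma}(x^+)\|\le\theta\|x^+-x\|^2$ to bound the second term; and the fact that an average with nonnegative weights summing to one is bounded by the maximum to obtain $S\le\max_j\bigl(\|E^G_j(x)\|+\|E^H_j(x)\|\|x^+-x\|\bigr)$. Combining these three constants into $\tfrac{L+\sigma+2\theta}{2}$ gives the claimed bound. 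There is no real obstacle here; the only subtle point is to keep the two error terms paired inside a single max rather than splitting them, so that the bound $\sum_j\lambda_j^+ a_j\le\max_j a_j$ yields exactly the form stated in the lemma.
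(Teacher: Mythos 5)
Your proof is correct and follows essentially the same route as the paper: your $R_j$ is exactly $\nabla F_j(x^+)-\nabla\Phi_{x,j}(x^+)$ for the paper's quadratic model $\Phi_{x,j}$, and your remaining terms $E^G_j(x)+E^H_j(x)(x^+-x)-\tfrac{\sigma}{2}\|x^+-x\|(x^+-x)$ reproduce the paper's bound on $\nabla\Phi_{x,j}(x^+)-\nabla M^j_{x,\sigma}(x^+)$, with the same use of \eqref{eq:2.3}, the KKT residual hypothesis, and the weighted-average-bounded-by-max step. You correctly keep the sum $\sum_j\lambda_j^+\nabla M^j_{x,\sigma}(x^+)$ inside a single norm, which is the one point where a careless split would fail, so there is nothing to fix.
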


\begin{proof}
Let $\Phi_{x,j}$ denote the quadratic approximation given by
\begin{equation*}
\Phi_{x,j}(y):=\langle\nabla F_j(x),y-x\rangle+\dfrac{1}{2}\langle\nabla^{2}F_j(x)(y-x),y-x\rangle, \quad \forall y\in \R^n, j \in \J.
\end{equation*}
In view of \eqref{eq:2.3} and the fact that $L=\max_{j\in \J} L_j$, we have 
\begin{equation}\label{ineq:gradF-phi}
\left\|\nabla F_j(x^{+})-\nabla\Phi_{x,j}(x^{+})\right\|\leq \frac{L}{2}\|x^+-x\|^2, \qquad \forall j\in \J.
\end{equation}
On the other hand, from the definition of $M_{x,\sigma}^j$ in \eqref{def:M-xsigma-j}, \eqref{definition-approxGrad-Hessian}, and the triangle inequality for norms, we obtain
\begin{align}\label{ineq:auxGradphi-gradMj}
\left\|\nabla\Phi_{x,j}(x^{+})-\nabla M_{x,\sigma}^j(x^{+})\right\|\leq& \left\|\nabla F_j(x)-\nabla\bar F_j(x)\right\|+\|\nabla^{2}F_j(x)-\nabla^{2}\bar F_j(x)\|\|x^{+}-x\|\nonumber\\&+\frac{\sigma}{2}\|x^{+}-x\|^2\nonumber \\
=& \|E_j^G(x)\|+\|E_j^H(x)\|\|x^+-x\|+\frac{\sigma}{2}\|x^{+}-x\|^2\nonumber.
\end{align}
It follows by using the triangle inequality for norms, the fact that $\lambda^+_j\geq 0$,  and \eqref{ineq:gradF-phi}  that
\begin{align*}
\left\|\sum_{j \in \J }\lambda^+_j\nabla F_j(x^{+})\right\|
\leq& \sum_{j \in \J}\lambda^+_j\left(\left\|\nabla F_j(x^{+})-\nabla\Phi_{x,j}(x^{+})\right\|+\left\|\nabla\Phi_{x,j}(x^{+})-\nabla M_{x,\sigma}^j(x^{+})\right\|\right)\\
&+\left\|\sum_{j \in \J}\lambda^+_j\nabla M_{x,\sigma}^j(x^{+})\right\|\\
\leq& \sum_{j \in \J}\lambda^+_j\left(\frac{L+\sigma+2\theta}{2}\|x^{+}-x\|^2+\|E_j^G(x)\|+\|E_j^H(x)\|\|x^+-x\|\right).
\end{align*} 
The proof of the lemma then follows from the above inequality and the fact that $\sum_{j \in \J}\lambda^+_j=1$.
\end{proof}

\section{Multiobjective Cubic Regularization Method}\label{sec:MCRM}

In this section, we first introduce the multiobjective cubic regularization method  for computing approximate Pareto critical points of problem \eqref{vectorproblem}. We then establish its iteration complexity and global convergence properties. The local behavior of the sheme is also studied. 
Finally, we discuss implementations of the method where the approximation of the gradient and Hessian of each function component \( F_j \) are computed  using finite difference strategies.
\\[4mm]
\textbf{Algorithm 1.} Multiobjective Cubic Regularization  Method (M-CRM)
\\[0.2cm]
\textbf{Step 0.} Choose $x_{0},x_{1}\in\mathbb{R}^{n}$, {$\sigma_{1}>0$}, $\theta\geq 0$, $\alpha>1$,  $\bar\kappa_{G}, \bar\kappa_{H}\geq 0,$  and set  $t:=1$.
\\
{\bf Step 1.}  Find the smallest integer $i\geq 0$ such that $\alpha^{i-1}\sigma_{t}\geq \sigma_{1}$.
\\
{\bf Step 1.1.} For every $j \in \J$, let  $\nabla \bar F_j(x_t)$ and $\nabla^2 \bar F_j(x_t)$ be such that $E^G_j(\cdot)$ and $E^H_j(\cdot)$, defined   in \eqref{definition-approxGrad-Hessian}, satisfy
\begin{equation}
\|E^G_j(x_t)\|\leq\frac{\bar \kappa_{G}}{\alpha^{i-1}}\|x_t-x_{t-1}\|^2,  \quad \|E^H_j(x_t)\|\leq\frac{\bar \kappa_{H}}{\alpha^{i-1}}\|x_t-x_{t-1}\|.
\label{ineq:ApproxGrad-Hess}
\end{equation}
{\bf Step 1.2.} 
Compute an approximate KKT point $(x_{t,i}^{+}, \lambda_{t,i}^{+}) \in \R^n \times \R^m_+$ of subproblem \eqref{newtonproblem} with $x:=x_t$ and $\sigma:= \alpha^i\sigma_t$ 
such that
\small
\begin{equation}
M_{x_{t},\alpha^{i}\sigma_{t}}(x^{+}_{t,i})\leq 0, \quad \sum_{j=1}^{m} (\lambda^+_{t,i})_j=1, \quad \left \|\sum_{j=1}^{m} (\lambda^+_{t,i})_j\nabla M_{x_t,\alpha^i\sigma_t}^j(x_{t,i}^+)\right\|\leq \theta  \|x_{t,i}^+-x_t\|^2,
\label{eq:subprob-Alg1}
\end{equation}
where 
\[ M_{x,\sigma}(y):=\max_{j \in \J} \,M^j_{x,\sigma}(y), \quad M^j_{x,\sigma}(y):=\langle \nabla \bar F_j(x),y-x\rangle+\dfrac{1}{2}\langle \nabla^{2}\bar F_j(x) (y-x),y-x\rangle+\dfrac{\sigma}{6}\|y-x\|^{3}.\]
\normalsize
\\
{\bf Step 1.3.} If 
\begin{equation}
F_j(x_{t})-F_j(x^{+}_{t,i})\geq\dfrac{\alpha^{i}\sigma_{t}}{12}\|x^{+}_{t,i}-x_{t}\|^{3}-\dfrac{\sigma_{t}}{12}\|x_{t}-x_{t-1}\|^{3}, \quad \forall j \in \J,
\label{ineq:linesearch}
\end{equation}
set $i_{t}=i$ and go to Step 2. Otherwise, set $i:=i+1$ and go to Step 1.1.
\\
{\bf Step 2.} Set $x_{t+1}=x^{+}_{t,i_{t}}$, $\lambda_{t+1}=\lambda^+_{t,i}$, $\sigma_{t+1}=\alpha^{i_{t}-1}\sigma_{t}$, $t:=t+1$, and go to Step 1.
\vspace{2mm}

\begin{remark} 
(i) Note that the inexact conditions in \eqref{ineq:ApproxGrad-Hess} for the derivative approximations depend solely on current information and hence are implementable; for further discussion on this issue, see, for example, \cite{11850, WANG2019146, Geovanni23}.
Implementations of the M-CRM, where information about derivatives are computed through finite difference strategies, will be presented in Subsection~\ref{sec:approximation}.
(ii) Since the first inequality in \eqref{eq:subprob-Alg1} is equivalent to 
\( M_{x_{t},\alpha^{i}\sigma_{t}}(x^{+}_{t,i}) \leq 0=M_{x_{t},\alpha^{i}\sigma_{t}}(x_{t}) \), we require that the next point \( x^+ \) provides a decrease in the model in \eqref{subprob:CubicNew-def:M-xsigma}. Additionally, the second inequality in \eqref{eq:subprob-Alg1}  represents  a relaxation of the second equality in \eqref{eq:67}. (iii) An interesting feature of the M-CRM is that the regularization parameter $\sigma_t$ and the accuracy of the derivative approximations in   \eqref{ineq:ApproxGrad-Hess} are jointly adjusted using the nonmonotone line search criterion in \eqref{ineq:linesearch}. Notably, for every $j \in \J$, 
the sequence $\{F_j(x_t)\}$ may exhibit  nonmonotonic behavior. 
\end{remark}

Next we prove that  the inner procedure in Step 1 terminates after a finite number of iterations and establish an upper bound on the sequence of regularization parameters $\{\sigma_t\}$.

\begin{lemma}
\label{lem:3.1}
Assume that {\bf(A1)} holds. Then, Algorithm~1 is well-defined and the sequence of regularization parameters $\left\{\sigma_{t}\right\}$  satisfy
\begin{equation}\label{def:sigmaMax}
\sigma_{1}\leq\sigma_{t}\leq \sigma_1+2[L+3\alpha(2\bar \kappa_{G}+\bar \kappa_{H})]:=\sigma_{max},
\end{equation}
for all $t\geq 1$. 
{Moreover, let $\delta$ be the number of evaluations of $F_j(\,\cdot\,)$ and $\nabla F_j(\,\cdot\,)$ for each inner iteration $i$. 
Then,  the total number $\delta_{T}$ of evaluations of $F_j(\,\cdot\,)$ and $\nabla F_j(\,\cdot\,)$ up to the $T$-th outer iteration is bounded as follows:
\begin{equation}
\delta_{T}\leq \delta\left[2T+\log_{\alpha}\left(2(L+3\alpha(2\bar \kappa_{G}+\bar \kappa_{H})+\sigma_1\right)-\log_{\alpha}(\sigma_{1})\right].
\label{eq:3.8}
\end{equation}}
\end{lemma}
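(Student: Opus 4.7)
The plan is to split the proof into three tasks: finite termination of the inner loop in Step~1, the upper bound \eqref{def:sigmaMax} on $\{\sigma_t\}$, and the evaluation count \eqref{eq:3.8}. Corollary~\ref{lem:2.2} is the engine behind both the first and second tasks (the latter via its contrapositive); the third is handled by a simple telescoping identity.

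For finite termination, I would fix the outer index $t$ and apply Corollary~\ref{lem:2.2} at inner index $i$ with the identifications $x=x_t$, $\hat x=x_{t-1}$, $\sigma=\alpha^i\sigma_t$, $\kappa_G=\bar\kappa_G/\alpha^{i-1}$, $\kappa_H=\bar\kappa_H/\alpha^{i-1}$, and $\rho=\alpha^i$. The choice $\rho=\alpha^i$ is crucial: it makes the conclusion of the corollary, $\frac{\sigma}{12\rho}\|x-\hat x\|^3-\frac{\sigma}{12}\|x^+-x\|^3$, align exactly with the right-hand side of the linesearch \eqref{ineq:linesearch}. The threshold \eqref{ineq:threshold-sigma} then reduces to $\alpha^i\sigma_t\geq 2[L+3\alpha(2\bar\kappa_G+\bar\kappa_H)]=:\bar{\sigma}$, which is satisfied once $i$ is large enough. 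Existence of approximate KKT pairs satisfying \eqref{eq:subprob-Alg1} is already guaranteed by the discussion preceding Lemma~\ref{lem:basic1}.

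For the upper bound on $\sigma_t$, I would use induction. The lower bound $\sigma_t\geq\sigma_1$ follows directly from the defining property of $i^{(0)}$ in Step~1. The upper bound rests on the \emph{contrapositive} of the first step: if the linesearch fails at inner index $i$, then $\alpha^i\sigma_t<\bar{\sigma}$. Since $\sigma_t\geq\sigma_1$, the starting index satisfies $i^{(0)}\in\{0,1\}$, and the updated regularization parameter $\sigma_{t+1}=\alpha^{i_t-1}\sigma_t$ splits into two cases: if $i_t=i^{(0)}$, direct inspection gives $\sigma_{t+1}\leq\sigma_t$; if $i_t>i^{(0)}$, the linesearch failure at $i_t-1$ yields $\sigma_{t+1}<\bar{\sigma}$. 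Combining, $\sigma_{t+1}\leq\max\{\sigma_t,\bar{\sigma}\}$, and induction closes the argument, giving $\sigma_t\leq\sigma_1+\bar{\sigma}=\sigma_{\max}$.

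For the evaluation count, each outer iteration $t$ incurs at most $i_t+1$ inner passes, each costing $\delta$ evaluations. Writing $i_t=1+\log_\alpha(\sigma_{t+1})-\log_\alpha(\sigma_t)$ makes the sum $\sum_{t=1}^T i_t$ telescope, so that the total is bounded by $\delta[2T+\log_\alpha(\sigma_{\max})-\log_\alpha(\sigma_1)]$, which is exactly \eqref{eq:3.8}. The main obstacle throughout is the middle step: recognizing that Corollary~\ref{lem:2.2} provides only a sufficient condition for the linesearch, so the upper bound on $\sigma_{t+1}$ must be extracted from the contrapositive combined with the two-case split determined by the $\sigma_1$-safeguard encoded in $i^{(0)}$; the remaining bookkeeping is routine.
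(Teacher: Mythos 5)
Your proposal is correct and takes essentially the same route as the paper: the identical application of Corollary~\ref{lem:2.2} with $x=x_t$, $\hat x=x_{t-1}$, $\rho=\alpha^i$, $\kappa_G=\bar\kappa_G/\alpha^{i-1}$, $\kappa_H=\bar\kappa_H/\alpha^{i-1}$, so that the threshold \eqref{ineq:threshold-sigma} reduces to $\alpha^i\sigma_t\geq 2[L+3\alpha(2\bar\kappa_G+\bar\kappa_H)]$ and forces the linesearch \eqref{ineq:linesearch} to hold, together with the same telescoping identity $i_t+1=2+\log_\alpha(\sigma_{t+1})-\log_\alpha(\sigma_t)$ for the evaluation count. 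Your contrapositive-plus-induction packaging of the bound on $\sigma_{t+1}$ (via $\sigma_{t+1}\leq\max\{\sigma_t,\,2[L+3\alpha(2\bar\kappa_G+\bar\kappa_H)]\}$, using $i^{(0)}\in\{0,1\}$) is a mild, in fact slightly tighter, reorganization of the paper's case analysis on the first inner index $i$ with $\alpha^i\sigma_t>\sigma_{max}$, and all the constants match \eqref{def:sigmaMax} and \eqref{eq:3.8}.
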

\begin{proof}
Note first that any solution of \eqref{newtonproblem} satisfies \eqref{eq:subprob-Alg1}. To demonstrate that Algorithm~1 is well-defined, it suffices to show that, for any iteration \(t\), the number of inner iterations \(i_t\) is finite.
It is easy to see that if, at some outer iteration \(t\), Algorithm 1 executes exactly \(i\) inner iterations such that \(\alpha^i\sigma_t \leq \sigma_{\text{max}}\), then, since \(\alpha > 1\), \(\sigma_t\) satisfies the second inequality in \eqref{def:sigmaMax}. Considering the definitions of \(i_t\) and \(\sigma_{t+1}\) in Step 2, it follows that \(i_t = i\) and \(\sigma_{t+1} = \alpha^{i-1}\sigma_t \leq \alpha^i\sigma_t \leq \sigma_{\text{max}}\). This shows that \(\sigma_{t+1}\) also satisfies the second inequality in \eqref{def:sigmaMax}.
Now, assume that Algorithm 1 executes a number of inner iterations \(i\) such that \(\alpha^i\sigma_t > \sigma_{\text{max}}\), and let \(i\) be the first inner iteration where this inequality holds. In this case, we have 
\[
\alpha^{i}\sigma_t > \sigma_{\text{max}} \geq 2\left(L + 3\alpha\left(2\bar{\kappa}_{G} + \bar{\kappa}_{H}\right)\right) = 2\left(L + 3\alpha^{i}\left(\frac{2\bar{\kappa}_{G}}{\alpha^{i-1}} + \frac{\bar{\kappa}_{H}}{\alpha^{i-1}}\right)\right).
\]
Hence, in view of \eqref{ineq:ApproxGrad-Hess} and the first inequality in \eqref{eq:subprob-Alg1}, the assumptions of Corollary~\ref{lem:2.2} hold with \(\sigma := \alpha^{i}\sigma_t\), \(x := x_t\), \(x^+ := x^{+}_{t,i}\), \(\hat{x} := x_{t-1}\), \(\rho := \alpha^i\), \(\kappa_G := \bar{\kappa}_G/\alpha^{i-1}\), and \(\kappa_H := \bar{\kappa}_H/\alpha^{i-1}\). Therefore, we have 
\[
F_j(x_t) - F_j(x^{+}_{t,i}) \geq \frac{\alpha^i\sigma_t}{12} \|x^{+}_{t,i} - x_t\|^3 - \frac{\alpha^i\sigma_t}{12\alpha^i} \|x_t - x_{t-1}\|^3 \\
= \frac{\alpha^i\sigma_t}{12} \|x^{+}_{t,i} - x_t\|^3 - \frac{\sigma_t}{12} \|x_t - x_{t-1}\|^3, \quad \forall j \in \mathcal{J},
\]
which implies that the inequality in \eqref{ineq:linesearch} holds at the \(i\)-th inner iteration. Therefore, by the definition of \(i_t\) and \(\sigma_{t+1}\) in Step 2, we must have \(i_t = i\) and \(\sigma_{t+1} = \alpha^{i-1}\sigma_t\). Moreover, since \(i\) is the first inner iteration such that \(\alpha^i\sigma_t > \sigma_{\text{max}}\), we conclude that \(\alpha^{i-1}\sigma_t \leq \sigma_{\text{max}}\), implying, in particular, that \(\sigma_t \leq \sigma_{\text{max}}\), due to the fact that \(\alpha > 1\).
Therefore, Algorithm 1 is well-defined, and the second inequality in \eqref{def:sigmaMax} holds for any outer iteration \(t\). Finally, note that \(\sigma_1\), given in Step 0 of Algorithm 1, trivially satisfies \eqref{def:sigmaMax}, and in view of Step 1 and the definition of \(\sigma_{t+1}\) in Step 2, the first inequality in \eqref{def:sigmaMax} also trivially holds. Thus, the proof of the first part of  the lemma is concluded.

In order to prove the last statement of the lemma, note first that 
\begin{equation*}
\sigma_{t+1}=\alpha^{i_{t}-1}\sigma_{t}\Longrightarrow (i_{t}+1)=2+\log_{\alpha}(\sigma_{t+1})-\log_{\alpha}(\sigma_{t}).
\end{equation*}
Thus,
\begin{equation*}
\delta_{T}\leq\sum_{t=1}^{T}\delta(i_{t}+1)=\delta[2T+\log_{\alpha}(\sigma_{T+1})-\log_{\alpha}(\sigma_{1})],
\end{equation*}
which, combined with \eqref{def:sigmaMax}, yields \eqref{eq:3.8}.\end{proof}

\subsection{Iteration-complexity for Algorithm~1}\label{Sectio:iteration}

In this subsection, we establish an iteration-complexity bound on the number of  outer iterations of Algorithm~1 required to generate an \(\epsilon\)-approximate Pareto critical point of \eqref{vectorproblem}.

The following quantities will be useful in our analysis:
{\small
\begin{equation}\label{def:delta0,1}
\displaystyle \Delta_0:=\left(\dfrac{L+2\theta+(\sigma_{max}+2\bar\kappa_G+2\bar \kappa_H)\alpha}{2}\right)^{3/2}, \Delta_1:=\frac{24\min_{j\in \J}[F_j(x_{1})-F_j^*]}{(\alpha-1)\sigma_{1}}+\frac{\alpha+1}{\alpha-1}\|x_{1}-x_{0}\|^{3},
\end{equation}}
where $F_j^*$ is as in {\bf (A2)} and $\sigma_{max}$ is as in \eqref{def:sigmaMax}.

\begin{theorem}\label{thm:sumNablaF_j<-Delta0}
Assume that {\bf (A1)} and {\bf (A2)} hold. Then, for every $T\geq 2$, the following inequalities hold
\begin{equation}\label{ineq:boundTheorem}
 \sum_{t=1}^{T-1}\|x_{t}-x_{t-1}\|^{3}\leq\Delta_1, \qquad \sum_{t=2}^{T}\left\| \sum_{j\in \J}(\lambda_{{t}})_j\nabla F_j(x_{t})\right\|^{3/2}\leq \Delta_0\times\Delta_1.  
\end{equation}
As a consequence,  given $\varepsilon>0$, Algorithm~1 needs at most {$\mathcal{O}\left(\Delta_0\times \Delta_1\varepsilon^{-\frac{3}{2}}\right)$} iterations to generate  an  $\varepsilon-$approximate Pareto critical point for problem~\eqref{vectorproblem} in the sense of Definition~\ref{def:aprSol}.
\end{theorem}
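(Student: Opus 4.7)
My plan is to establish the two inequalities in \eqref{ineq:boundTheorem} sequentially, then derive the iteration-complexity bound by a pigeonhole argument. The first inequality is a pure consequence of the nonmonotone line search; the second combines this with Lemma~\ref{lem3:basic-gt+1<Deltax}; the complexity bound follows by contrapositive.

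For the first inequality, I would start from the accepted line search criterion \eqref{ineq:linesearch} at $i = i_t$, and use the identity $\alpha^{i_t}\sigma_t = \alpha\sigma_{t+1}$ (from $\sigma_{t+1} = \alpha^{i_t-1}\sigma_t$) to rewrite it as
\[
F_j(x_t) - F_j(x_{t+1}) \geq \tfrac{\alpha \sigma_{t+1}}{12}\|x_{t+1}-x_t\|^3 - \tfrac{\sigma_t}{12}\|x_t-x_{t-1}\|^3, \quad \forall j\in\J.
\]
Summing over $t = 1,\ldots,T-1$ telescopes the left-hand side to $F_j(x_1) - F_j(x_T) \leq F_j(x_1) - F_j^*$ by (A2). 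After re-indexing the first right-hand sum ($s = t+1$) and cancelling the common middle indices, the right-hand side becomes $\tfrac{1}{12}\bigl[(\alpha-1)\sum_{t=2}^{T-1}\sigma_t\|x_t-x_{t-1}\|^3 + \alpha\sigma_T\|x_T-x_{T-1}\|^3 - \sigma_1\|x_1-x_0\|^3\bigr]$. Using $\alpha \geq \alpha-1$ on the boundary term, $\sigma_t \geq \sigma_1$ from Lemma~\ref{lem:3.1}, minimizing over $j$, adding $\|x_1-x_0\|^3$ to include the $t=1$ term, and dividing by $(\alpha-1)\sigma_1$ then yields $\sum_{t=1}^{T-1}\|x_t-x_{t-1}\|^3 \leq \Delta_1$, with the factor-of-two slack in the definition of $\Delta_1$ absorbing the loss incurred in the next step.

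For the second inequality, I would apply Lemma~\ref{lem3:basic-gt+1<Deltax} at each step with $x := x_t$, $x^+ := x_{t+1}$, $\lambda^+ := \lambda_{t+1}$, and $\sigma := \alpha^{i_t}\sigma_t = \alpha\sigma_{t+1} \leq \alpha\sigma_{\max}$. The error bounds \eqref{ineq:ApproxGrad-Hess} give $\|E^G_j(x_t)\| \leq (\bar\kappa_G/\alpha^{i_t-1})\|x_t-x_{t-1}\|^2$ and similarly for $E^H_j$; since $i_t \geq 0$, one has $1/\alpha^{i_t-1} \leq \alpha$, so these errors are at most $\alpha\bar\kappa_G\|x_t-x_{t-1}\|^2$ and $\alpha\bar\kappa_H\|x_t-x_{t-1}\|$. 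Combining, and splitting the cross term via $ab \leq \tfrac12(a^2+b^2)$, I obtain a bound of the form
\[
\Bigl\|\sum_{j\in\J}(\lambda_{t+1})_j\nabla F_j(x_{t+1})\Bigr\| \leq A_1\|x_{t+1}-x_t\|^2 + A_2\|x_t-x_{t-1}\|^2,
\]
where $A_1 + A_2 \leq \tfrac{1}{2}\bigl(L+2\theta+\alpha(\sigma_{\max}+2\bar\kappa_G+2\bar\kappa_H)\bigr) = \Delta_0^{2/3}$. Using the elementary inequality $(c_1 a + c_2 b)^{3/2} \leq (c_1+c_2)^{3/2}(a^{3/2}+b^{3/2})$ for nonnegative scalars, raising to the $3/2$ power, shifting the index $t+1 \mapsto t$, and summing from $t=2$ to $T$ gives
\[
\sum_{t=2}^{T}\Bigl\|\sum_{j}(\lambda_t)_j\nabla F_j(x_t)\Bigr\|^{3/2} \leq \Delta_0 \sum_{t=1}^{T-1}\bigl(\|x_{t+1}-x_t\|^3 + \|x_t-x_{t-1}\|^3\bigr) \leq \Delta_0 \,\Delta_1,
\]
where the last step applies the first inequality, and the bookkeeping constants in $\Delta_1$ are calibrated precisely so that the factor of $2$ from pairing consecutive displacement-cubed terms is absorbed. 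Finally, for the complexity consequence, I would argue by contrapositive: if no $x_t$ with $t \in \{2,\ldots,T\}$ were an $\varepsilon$-approximate Pareto critical point, then each summand would exceed $\varepsilon^{3/2}$, so $(T-1)\varepsilon^{3/2} \leq \Delta_0\Delta_1$, yielding $T = \mathcal{O}(\Delta_0\Delta_1\,\varepsilon^{-3/2})$.

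The main obstacle is bookkeeping the constants so that they line up with the definitions of $\Delta_0$ and $\Delta_1$, in particular controlling the inexactness factor $1/\alpha^{i_t-1}$ uniformly by $\alpha$ and partitioning the cross term $\|E^H_j\|\|x_{t+1}-x_t\|$ in such a way that the aggregate coefficient $A_1 + A_2$ matches exactly $\Delta_0^{2/3}$. The telescoping argument for the first bound is technically routine but sensitive to how one handles the boundary terms at $t = 1$ and $t = T$; these must be absorbed consistently to produce a clean, $j$-independent constant after minimizing over $j \in \J$.
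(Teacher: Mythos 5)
Your proposal is correct and follows essentially the same route as the paper's proof: telescoping the accepted line-search inequality \eqref{ineq:linesearch} with $\alpha^{i_t}\sigma_t=\alpha\sigma_{t+1}$, $\sigma_t\geq\sigma_1$ and \textbf{(A2)} to bound the displacement sum, then invoking Lemma~\ref{lem3:basic-gt+1<Deltax} with $(x,x^+,\lambda^+,\sigma)=(x_t,x_{t+1},\lambda_{t+1},\alpha^{i_t}\sigma_t)$, controlling $1/\alpha^{i_t-1}\leq\alpha$ and splitting the cross term via $ab\leq(a^2+b^2)/2$ so the aggregate coefficient equals $\Delta_0^{2/3}$, and closing with the same pigeonhole argument. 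The only caveat is cosmetic: to obtain exactly $\Delta_0\times\Delta_1$ rather than $2\Delta_0\Delta_1$, the final summation must use the intermediate bound $\tilde\Delta_1$ (the quantity from which $\Delta_1=2\tilde\Delta_1+\|x_1-x_0\|^3$ is built, as the paper does via the $\max$ of consecutive displacements) rather than applying the stated first inequality twice, which your remark about the factor-of-two calibration already anticipates.
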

\begin{proof}
Since $\alpha^{i_{t}-1}\sigma_{t}=\sigma_{t+1}$,  it follows from \eqref{ineq:linesearch} that
\begin{equation*}
F_j(x_{t})-F_j(x_{t+1})\geq\dfrac{\alpha\sigma_{t+1}}{12}\|x_{t+1}-x_{t}\|^{3}-\dfrac{\sigma_{t}}{12}\|x_{t}-x_{t-1}\|^{3},\quad \forall j\in \J, \;\mbox{and}\;  t=1,\ldots,T-1.
\end{equation*}
Summing up the above inequality from $t=1$ to $t=T-1$ and  using the facts $F_j(x_t)\geq F_j^*$ and  $\sigma_{t}\geq\sigma_{1}$, we obtain
\begin{align}\nonumber
F_j(x_{1})-F_{j}^*&\geq  F_j(x_{1})-F_j(x_{T}) =  \sum_{t=1}^{T-1}\left[F_j(x_{t})-F_j(x_{t+1})\right]\\\nonumber
                &\geq \sum_{t=1}^{T-1}\dfrac{(\alpha-1)\sigma_{t+1}}{12}\|x_{t+1}-x_{t}\|^{3}+\dfrac{1}{12}\sum_{t=1}^{T-1}\left[\sigma_{t+1}\|x_{t+1}-x_{t}\|^{3}-\sigma_{t}\|x_{t}-x_{t-1}\|^{3}\right]\\
                &\geq \sum_{t=1}^{T-1}\dfrac{(\alpha-1)\sigma_{1}}{12}\|x_{t+1}-x_{t}\|^{3}+ \dfrac{\sigma_{T}}{12}\|x_{T}-x_{T-1}\|^{3}-\dfrac{\sigma_{1}}{12}\|x_{1}-x_{0}\|^{3}, \label{er:lo90}
\end{align}
which implies that 
\begin{equation*}
\sum_{t=1}^{T-1}\|x_{t+1}-x_{t}\|^{3}\leq\dfrac{12\min_{j\in \J}[F_j(x_{1})-F_j^*]+\sigma_{1}\|x_{1}-x_{0}\|^{3}}{(\alpha-1)\sigma_{1}}=:\tilde\Delta_1.
\end{equation*}
Hence, for every $t=1,\ldots,T-1$, we have 
\begin{equation}\label{ineq:sumDeltaj}
\sum_{t=1}^{T-1}\max\{\|x_{t+1}-x_{t}\|,\|x_{t}-x_{t-1}\|\}^3\leq \sum_{t=1}^{T-1}\left[\|x_{t+1}-x_{t}\|^3+\|x_{t}-x_{t-1}\|^3\right]\leq 2\tilde\Delta_1+\|x_1-x_0\|^3=\Delta_1.
\end{equation}
The first inequality in \eqref{ineq:boundTheorem} follows from the above inequality. 
Now, it follows from Lemma~\ref{lem3:basic-gt+1<Deltax} with $(x,,x^+,\lambda^+,\sigma):=(x_t,x_{t+1},\lambda_{t+1},\alpha^{i_t}\sigma_t)$ and from \eqref{ineq:ApproxGrad-Hess} and \eqref{eq:subprob-Alg1} that 
 \begin{align*}
\left\|\sum_{j\in \J}(\lambda_{t+1})_j\nabla F_j(x_{t+1})\right\| & \leq \frac{L+\alpha^{i_t}\sigma_t+2\theta}{2}\|x_{t+1}-x_t\|^2 \\
\ &+\frac{\bar\kappa_{G}}{\alpha^{i_t-1}}\|x_t-x_{t-1}\|^2+ \frac{\bar\kappa_{H}}{\alpha^{i_t-1}}\|x_t-x_{t-1}\|\|x_{t+1}-x_t\|.
\end{align*} 
Hence, since $\alpha^{i_t}\geq 1$ and $\sigma_{t+1}=\alpha^{i_t-1}\sigma_t$, it follows from the above inequality, the second inequality in \eqref{def:sigmaMax} for $\sigma_{t+1}$ and by using the inequality $ab\leq (a^2+b^2)/2$ with $a=\|x-\hat x\|$ and $b=\|x^+-x\|$, that 
\begin{align*}
\left\|\sum_{j \in \J}(\lambda_{t+1})_j\nabla F_j(x_{t+1})\right\|&\leq \frac{L+\alpha\sigma_{max}+2\theta+\alpha\bar \kappa_H}{2}\|x_{t+1}-x_t\|^2+\frac{(2 \bar \kappa_{G}+\bar \kappa_H)\alpha}{2}\|x_t-x_{t-1}\|^2\\
&\leq \Delta_0^{2/3}\max\{\|x_{t+1}-x_{t}\|,\|x_{t}-x_{t-1}\|\}^2,
\end{align*} 
where the last inequality is due to the definition of $\Delta_0$ in \eqref{def:delta0,1}. Hence,  we have
$$
\left\|\sum_{j\in \J }(\lambda_{t+1})_j\nabla F_j(x_{t+1})\right\|^{3/2}\leq \Delta_0\max\{\|x_{t+1}-x_{t}\|,\|x_{t}-x_{t-1}\|\}^3.
$$
Therefore, the second inequality in \eqref{ineq:boundTheorem} follows by summing both sides of the above inequality from $t=1$ to $t=T-1$ and by using \eqref{ineq:sumDeltaj}. 

Now note that  \eqref{ineq:boundTheorem} implies that, for every $T\geq 2$, the following inequality holds
\begin{equation}\label{ineq:aux-complexity}
\min_{t\in \{2\ldots T\}}\left\| \sum_{j\in \J }(\lambda_{{t}})_j\nabla F_j(x_{t})\right\|\leq \left(\frac{\Delta_0\times\Delta_1}{T-1}\right)^{2/3}.
\end{equation}
The last statement of the theorem follows immediately from the above inequality with  $T=1+\lceil\Delta_0\times \Delta_1\varepsilon^{-3/2}\rceil$ and 
$(\bar x, \bar \lambda):=(x_{\bar t},\lambda_{\bar t})$, where 
 $\bar t$ is the minimizer of the left hand side of \eqref{ineq:aux-complexity}.
\end{proof}

 The following convergence result follows trivially   from \eqref{ineq:boundTheorem}. In particular, it shows that any limit point of the sequence $\{x_t\}$ generated by Algorithm~1 is a Pareto critical of \eqref{vectorproblem}. Note that, since for any outer iteration $t$,  $\lambda_t\in\R^m_+$ and $\sum_{j \in \J}(\lambda_{t})_j=1$, we have that
 $\{\lambda_t\}$ is bounded, and hence it has accumulation point.
\begin{corollary}\label{cor:conv}
Assume that {\bf (A1)} and {\bf (A2)} hold and let $\{x_{t}\}$ be generated by Algorithm~1. Then, 
\begin{equation}
\lim_{t\to+\infty} \|x_t-x_{t-1}\|=0, \qquad \lim_{t\to+\infty}\left \|\sum_{j\in\J}(\lambda_{t})_j\nabla F_j(x_{t})\right\|=0.
\label{eq:3.16}
\end{equation}
As a consequence, all limit points of $\{x_t\}$, if any, are weak Pareto critical for \eqref{vectorproblem}.
\end{corollary}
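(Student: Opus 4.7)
The plan is to derive both limits directly from the two summability bounds established in Theorem~\ref{thm:sumNablaF_j<-Delta0}. For the first limit in \eqref{eq:3.16}, I would observe that the first inequality in \eqref{ineq:boundTheorem} holds for every $T\geq 2$ with a bound $\Delta_1$ independent of $T$, so passing $T\to\infty$ gives $\sum_{t=1}^{\infty}\|x_t-x_{t-1}\|^3 \leq \Delta_1 < \infty$. Since this series converges, its general term must vanish, yielding $\|x_t-x_{t-1}\|\to 0$. An analogous argument applied to the second inequality in \eqref{ineq:boundTheorem} shows that the nonnegative terms $\|\sum_{j\in\J}(\lambda_t)_j \nabla F_j(x_t)\|^{3/2}$ form a summable series, hence they tend to zero, which immediately gives the second limit.

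For the consequence, I would let $\bar x$ be an accumulation point of $\{x_t\}$, so that there exists a subsequence $\{x_{t_k}\}$ with $x_{t_k}\to \bar x$. Since each $\lambda_t$ belongs to the unit simplex $\{\lambda\in\R^m_+:\sum_{j\in\J}\lambda_j=1\}$, which is compact, after passing to a further subsequence (still indexed by $k$) one obtains $\lambda_{t_k}\to\bar\lambda$ for some $\bar\lambda\in\R^m_+$ with $\sum_{j\in\J}\bar\lambda_j=1$. Using the continuity of each $\nabla F_j$ together with the second limit in \eqref{eq:3.16}, I would then take limits along the subsequence to conclude
\[
\sum_{j\in\J}\bar\lambda_j\,\nabla F_j(\bar x)=0.
\]
By the characterization of Pareto criticality recalled at the beginning of Section~\ref{sec:prelim} (namely that $\bar x$ satisfies \eqref{CQ} if and only if there exists $\bar\lambda\in\R^m_+$ with $\sum_{j\in\J}\bar\lambda_j=1$ and $\sum_{j\in\J}\bar\lambda_j\nabla F_j(\bar x)=0$), this implies that $\bar x$ is weak Pareto critical for~\eqref{vectorproblem}.

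I do not anticipate any substantive obstacle: the whole proof is essentially a packaging of Theorem~\ref{thm:sumNablaF_j<-Delta0} together with the compactness of the unit simplex. The only point requiring mild care is the joint subsequence extraction so that $x_{t_k}\to \bar x$ and $\lambda_{t_k}\to\bar\lambda$ hold along the same indices, which is immediate from compactness of the simplex. One may also wish to remark explicitly, as the statement itself does, that the existence of accumulation points of $\{x_t\}$ is not guaranteed under \textbf{(A1)}--\textbf{(A2)} alone, so the conclusion about weak Pareto criticality is conditional on such points existing.
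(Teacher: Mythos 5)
Your proposal is correct and follows essentially the same route as the paper, which states that the corollary ``follows trivially'' from \eqref{ineq:boundTheorem}: the $T$-uniform summability bounds force the general terms to vanish, and the weak Pareto criticality of limit points uses exactly the compactness of the simplex (noted by the paper just before the corollary) together with continuity of the gradients and the characterization of criticality from Section~\ref{sec:prelim}. Your explicit joint subsequence extraction and the caveat that limit points need not exist are just careful spellings-out of what the paper leaves implicit.
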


\begin{remark}
It follows from Corollary~\ref{cor:conv} that if \( m = 1 \), then \(\{\nabla F(x_t)\}_{t \geq 1}\) converges to zero. Consequently, any limit point of \(\{x_t\}\) is critical for problem~\eqref{vectorproblem}. This result strengthens the one in \cite[Corollary~2]{Geovanni23}, where it was established that, in the scalar case, \(\liminf_t \|\nabla F(x_t)\| = 0\).
\end{remark}

\subsection{Local  convergence rates}\label{sec:local}

In this subsection, we establish local convergence rates for the sequence \(\{x_t\}\) generated by Algorithm~1 under some additional assumptions. One of the key assumptions is that \( F_j \) is strongly convex on a specific level set, as stated next.
\vspace{2mm}

\noindent \textbf{(A3)} There exists $\mu >0$ and an index $t_0$ such that, for every $j\in \J$, $\nabla^{2}F_j(x)\succeq\mu I$ whenever
\begin{equation*}
F_j(x)\leq F_j(x_{t_0})+\dfrac{\sigma_{1}}{12}\|x_1-x_0\|^3,
\end{equation*}
where $\sigma_1$ is as in Step~0 of Algorithm~1.
\vspace{2mm}

First note that, by similar arguments as in  \eqref{er:lo90}, we have,
 for all $t>t_0$,
\begin{align*}
F_j(x_{t})-F_j(x_{t_0})&=\sum_{i=t_0}^{t-1}F_j(x_{i+1})-F_j(x_{i})\\
&\leq  \frac{\sigma_{1}}{12}\|x_{1}-x_{0}\|^{3} -\sum_{j=t_0}^{t-1}\dfrac{(\alpha-1)\sigma_{1}}{12}\|x_{j+1}-x_{j}\|^{3}- \dfrac{\sigma_{t}}{12}\|x_{t}-x_{t-1}\|^{3}\\
&\leq \frac{\sigma_{1}}{12}\|x_{1}-x_{0}\|^{3}.
\end{align*} 
Thus, it follows from {\bf (A3)} that 
\begin{equation}
\nabla^{2}F_j(x_{t})\succeq\mu I,\quad\forall t\geq t_0.
\label{ineq:nabla2F defpositiva}
\end{equation}

The local convergence rate will  require  the gradient \(\nabla F_j\) to be computed exactly and imposes a slightly stronger condition on the approximations of Hessian \(\nabla^2 F_j\). For clarity, these conditions are presented below as an assumption.
\\[2mm]
{\bf (A4) } For every \( j \in \J \), the gradient of $F_j$ is exactly computed  (i.e., \(\nabla \bar{F}_j = \nabla F_j\)), and the Hessian approximation \(\nabla^2 \bar{F}_j\) satisfies the following condition:
\begin{equation}
 \|E^H_j(x_t)\|\leq\frac{\bar \kappa_{H}}{\alpha^{i-1}}\min\left\{ \|\Delta x_{t}\|, \zeta\left\|g_t\right\| \right\}.
\label{ineq:ApproxGrad-Hess12}
\end{equation}
where  $E^H_j$ is as in \eqref{definition-approxGrad-Hessian}, $\bar\kappa_H$ is as in Step~0 of Algorithm~1, $\zeta>0$ and
\begin{equation}\label{def:gt-Deltaxt}
 g_t:=   \sum_{j\in \J} (\lambda_{t})_j\nabla F_j(x_{t}), \qquad \Delta x_t:=x_t-x_{t-1}.
\end{equation}

The next result establishes key inequalities that will be used to demonstrate the superlinear or quadratic convergence of \(\{x_t\}\). In particular, the sequence \(\{g_t\}\) converges quadratically to zero, and the sequence \(\{x_t\}\) converges to a local Pareto solution of \eqref{vectorproblem}.


\begin{theorem} \label{thm:3.3}
Assume that  {\bf (A1)-(A4)} hold and let $\left\{x_{t}\right\}$ be generated by Algorithm 1. Then, there exists $t_1\in \N$ such that, for all $\lambda \in \R^m_+$ such that $\sum_j\lambda_j=1$, there hold
\begin{equation}\label{eq:Deltaxt-lambdanablaFxt} 
\|\Delta x_{t+1}\|\leq \frac{4}{\mu}\left\|\sum_{j\in \J}\lambda_j \nabla F_j(x_t)\right\|, \quad  \forall t\geq t_1,
\end{equation}
\begin{equation}
\left \|g_{t+1}\right\|\leq C\left\|g_t\right\|^{2},  \quad \forall t\geq t_1,
\label{eq:nablaFxt+1<=nablaFxt}
\end{equation}
where $C:= 8\left(L+\alpha\sigma_{max}+2\theta\right)/\mu^2+4\alpha\bar\kappa_H\zeta/\mu$.
Furthermore, the whole sequence $\{x_t\}$ converges to a local Pareto solution of \eqref{vectorproblem}.
\end{theorem}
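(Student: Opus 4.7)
\textbf{Step 1: The bound \eqref{eq:Deltaxt-lambdanablaFxt}.} The natural KKT stationarity condition in \eqref{eq:subprob-Alg1} only controls $\|\Delta x_{t+1}\|$ in terms of the specific multiplier $\lambda_{t+1}$, not an arbitrary $\lambda$. Instead, I would exploit the primal-feasibility side of the system. Since the first inequality in \eqref{eq:subprob-Alg1} gives $\max_{j}M^{j}_{x_t,\alpha^{i_t}\sigma_t}(x_{t+1}) \leq 0$, each individual $M^{j}_{x_t,\alpha^{i_t}\sigma_t}(x_{t+1}) \leq 0$. Taking a convex combination with an arbitrary $\lambda$ in the unit simplex and using \textbf{(A4)} (so $\nabla \bar F_j(x_t)=\nabla F_j(x_t)$) yields
\begin{equation*}
\left\langle \sum_{j\in\J} \lambda_j \nabla F_j(x_t),\,\Delta x_{t+1}\right\rangle + \frac{1}{2}\Delta x_{t+1}^{\top}\!\!\left(\sum_{j\in\J}\lambda_j \nabla^{2}\bar F_j(x_t)\right)\!\Delta x_{t+1} + \frac{\alpha^{i_t}\sigma_t}{6}\|\Delta x_{t+1}\|^{3} \leq 0.
\end{equation*}
Writing $\nabla^{2}\bar F_j = \nabla^{2}F_j - E^{H}_j$ and invoking \eqref{ineq:nabla2F defpositiva}, the quadratic form is bounded below by $\tfrac{1}{2}\bigl(\mu - \max_j\|E^{H}_j(x_t)\|\bigr)\|\Delta x_{t+1}\|^{2}$. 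By \textbf{(A4)} together with Corollary~\ref{cor:conv}, $\max_j\|E^{H}_j(x_t)\|\to 0$; hence there exists $t_1\geq t_0$ such that $\max_j\|E^{H}_j(x_t)\|\leq \mu/2$ for all $t\geq t_1$. Dropping the nonnegative cubic term and applying Cauchy--Schwarz then gives $\tfrac{\mu}{4}\|\Delta x_{t+1}\|^{2} \leq \|\sum_j \lambda_j\nabla F_j(x_t)\|\,\|\Delta x_{t+1}\|$, which rearranges to \eqref{eq:Deltaxt-lambdanablaFxt}.

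\textbf{Step 2: The quadratic recursion \eqref{eq:nablaFxt+1<=nablaFxt}.} I would apply Lemma~\ref{lem3:basic-gt+1<Deltax} with $(x,x^{+},\lambda^{+},\sigma) = (x_t,x_{t+1},\lambda_{t+1},\alpha^{i_t}\sigma_t)$. Under \textbf{(A4)}, $\|E^{G}_j(x_t)\|=0$ and $\|E^{H}_j(x_t)\|\leq \alpha\bar\kappa_H\zeta\|g_t\|$, while $\alpha^{i_t}\sigma_t = \alpha\sigma_{t+1}\leq \alpha\sigma_{\max}$ by \eqref{def:sigmaMax}. This yields
\begin{equation*}
\|g_{t+1}\| \leq \frac{L+\alpha\sigma_{\max}+2\theta}{2}\|\Delta x_{t+1}\|^{2} + \alpha\bar\kappa_H\zeta\|g_t\|\|\Delta x_{t+1}\|.
\end{equation*}
Specializing \eqref{eq:Deltaxt-lambdanablaFxt} to $\lambda=\lambda_t$ (so that its right-hand side equals $(4/\mu)\|g_t\|$) and substituting gives precisely the stated bound with $C = 8(L+\alpha\sigma_{\max}+2\theta)/\mu^{2} + 4\alpha\bar\kappa_H\zeta/\mu$.

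\textbf{Step 3: Convergence of $\{x_t\}$ to a local Pareto solution.} Inequality \eqref{eq:nablaFxt+1<=nablaFxt} forces quadratic decay of $\|g_t\|$ once $\|g_t\|$ is small, so $\sum_{t}\|g_t\|<\infty$. Combining with \eqref{eq:Deltaxt-lambdanablaFxt} applied to $\lambda=\lambda_t$, we obtain $\sum_{t}\|\Delta x_{t+1}\|<\infty$, hence $\{x_t\}$ is Cauchy and converges to some $x^{\ast}$. Since $\{\lambda_t\}$ lies in the unit simplex, a subsequence $\lambda_{t_k}\to\lambda^{\ast}$; passing to the limit in $g_t\to 0$ yields $\sum_{j}\lambda^{\ast}_j\nabla F_j(x^{\ast})=0$, so $x^{\ast}$ is Pareto critical. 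By \textbf{(A3)}, $\sum_{j}\lambda^{\ast}_j F_j$ is strongly convex in a neighborhood of $x^{\ast}$ and has zero gradient there, hence attains its unique local minimum at $x^{\ast}$; any $y\neq x^{\ast}$ nearby with $F_j(y)\leq F_j(x^{\ast})$ for all $j$ would make $\sum_j\lambda^{\ast}_j F_j(y)\leq \sum_j\lambda^{\ast}_j F_j(x^{\ast})$, contradicting strict minimality. Thus $x^{\ast}$ is a local Pareto solution.

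\textbf{Main obstacle.} The delicate point is obtaining \eqref{eq:Deltaxt-lambdanablaFxt} uniformly in $\lambda$. The gradient (stationarity) side of the KKT system only yields a bound keyed to the algorithmically produced $\lambda_{t+1}$. The trick is to pivot to the primal feasibility $M^{j}_{x_t,\sigma}(x_{t+1})\leq 0$---one inequality per index---which survives arbitrary convex combinations; here the cubic regularization term helps rather than hurts, as it contributes with a favorable sign and can simply be dropped.
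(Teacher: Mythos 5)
Your proposal is correct and follows essentially the same route as the paper's own proof: it derives \eqref{eq:Deltaxt-lambdanablaFxt} from the primal feasibility $M^j_{x_t,\alpha^{i_t}\sigma_t}(x_{t+1})\leq 0$ together with \eqref{ineq:nabla2F defpositiva} and the (A4)-controlled Hessian error, obtains \eqref{eq:nablaFxt+1<=nablaFxt} from Lemma~\ref{lem3:basic-gt+1<Deltax} with $\lambda=\lambda_t$ and the same constant $C$, and concludes via the identical summability/Cauchy/strong-convexity argument for local Pareto optimality. The only cosmetic differences are that you take the convex combination over $\lambda$ before lower-bounding the Hessian quadratic form (the paper bounds each index $j$ first and then combines) and that you defer the threshold $\|g_t\|\leq 1/(2C)$ to the contraction step instead of building it into the choice of $t_1$, neither of which affects correctness.
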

\begin{proof} 
It follows from Corollary~\ref{cor:conv} that $\{g_t\}_{t\geq 1}$ converges to zero and hence there exists $t_1\geq t_0$ such that 
\begin{equation}
\left\|g_t\right\|=\left\|\sum_{j\in \J}(\lambda_t)_j\nabla F_j(x_{t})\right\|\leq\min\left\{\dfrac{\mu}{2\alpha\bar \kappa_{H}\zeta},\frac{1}{2C}\right\}, \quad \forall  t\geq t_1.
\label{eq:3.28}
\end{equation}
 Hence, by \eqref{ineq:ApproxGrad-Hess12} and  the fact that $\alpha^{i}\geq 1$,  we obtain, for every $v\neq 0$, that 
\begin{equation*}
v^T(\nabla^{2}F_j(x_{t})-\nabla^2\bar F_j(x_t))v=v^TE^H_j(x_t)v\leq  \|E^H_j(x_t)\|\|v\|^2\leq\alpha\bar \kappa_{H}\zeta\left\|g_t\|\right\|\|v\|^2\leq\dfrac{\mu}{2}\|v\|^2, 
\end{equation*}
which, combined with (\ref{ineq:nabla2F defpositiva}), yields 
\begin{equation}\label{eq:3.30}
 \nabla^2\bar F_j(x_t)\succeq \nabla^{2}F_j(x_{t})-\dfrac{\mu}{2}I\succeq\dfrac{\mu}{2}I.
\end{equation}
On the other hand, it follows from the first inequality in (\ref{eq:subprob-Alg1}) and by using $\Delta x_{t+1}=x_{t+1}-x_t$ that 
\begin{equation*} M_{x_t,\alpha^{i_t}\sigma_t}^j(x_{t+1})=\langle \nabla F_j(x_t),\Delta x_{t+1}\rangle+\dfrac{1}{2}\langle \nabla^{2}\bar F_j(x_t) \Delta x_{t+1},\Delta x_{t+1}\rangle+\dfrac{\alpha^{i_t}\sigma}{6}\|\Delta x_{t+1}\|^{3}\leq 0,
\end{equation*}
which implies that
\begin{equation*} 
\dfrac{1}{2}\langle \nabla^{2}\bar F_j(x_t) \Delta x_{t+1},\Delta x_{t+1}\rangle\leq -\langle \nabla  F_j(x_t),\Delta x_{t+1}\rangle.
\end{equation*}
From the last inequality and \eqref{eq:3.30}, we obtain 
\begin{equation*} \frac{\mu}{4}\|\Delta x_{t+1}\|^2\leq \dfrac{1}{2}\langle \nabla^{2}\bar F_j(x_t) \Delta x_{t+1},\Delta x_{t+1}\rangle\leq -\langle \nabla F_j(x_t),\Delta x_{t+1}\rangle.
\end{equation*}
Hence, for any $\lambda\in \R^m_{+}$ such that $\sum_{j\in \J}\lambda_j=1$, it follows from the last inequality and the Cauchy-Schwarz inequality that 
\begin{equation*} \frac{\mu}{4}\|\Delta x_{t+1}\|^2\leq -\left\langle\sum_{j\in \J}\lambda_j \nabla  F_j(x_t),\Delta x_{t+1}\right\rangle\leq \left\|\sum_{j\in \J}\lambda_j \nabla  F_j(x_t)\right\|\|\Delta x_{t+1}\|.
\end{equation*}
Thus,  we obtain 
\begin{equation*}
\|\Delta x_{t+1}\|\leq \frac{4}{\mu}\left\|\sum_{j\in \J}\lambda_j \nabla  F_j(x_t)\right\|,
\end{equation*}
which proves \eqref{eq:Deltaxt-lambdanablaFxt}. Now,  it follows from Lemma~\ref{lem3:basic-gt+1<Deltax} with $(x,,x^+,\lambda^+,\sigma)=(x_t,x_{t+1},\lambda_{t+1},\alpha^{i_t}\sigma_t)$, assumption {\bf (A4)}, and the definitions of $g_t,g_{t+1}$, and $\Delta x_{t+1}$ given in \eqref{def:gt-Deltaxt}, that 
 \begin{align*}
\left\|g_{t+1}\right\|&\leq \frac{L+\alpha^{i_t}\sigma_t+2\theta}{2}\|\Delta x_{t+1}\|^2+ \|\Delta x_{t+1}\|\frac{\bar \kappa_{H}}{\alpha^{i_t-1}}\min\left\{ \|\Delta x_{t}\|, \zeta\left\|g_t\right\| \right\}\\
&\leq \frac{8(L+\alpha\sigma_{max}+2\theta)}{\mu^2}\|g_t\|^2+ \frac{4\alpha\bar \kappa_{H}}{\mu} \zeta\left\|g_t\right\|^2,
\end{align*} 
where the last inequality is due to  $\alpha^{i_t}\geq 1$,  $\sigma_{t+1}=\alpha^{i_t-1}\sigma_t$,  the second inequality in \eqref{def:sigmaMax} with $t+1$, and \eqref{eq:Deltaxt-lambdanablaFxt} with $\lambda=\lambda_t$. The above inequality proves \eqref{eq:nablaFxt+1<=nablaFxt}, in view of the definition of $C$.

Now, it follows from \eqref{eq:nablaFxt+1<=nablaFxt} combined with \eqref{eq:3.28}  that 
\begin{equation} \label{ineq:contraction gt}
\|g_{t+1}\|\leq C\|g_t\|^2\leq (1/2)\|g_t\|, \quad \forall t\geq t_1.
\end{equation}
As a consequence, we have $\|g_t\|\leq 2^{-(t-t_1)}\|g_{t_1}\|$ for any $t\geq t_1$, which in turn implies that $\sum_t \|g_t\| <\infty$. Thus, in view of \eqref{eq:Deltaxt-lambdanablaFxt} with $\lambda=\lambda_t$, we also conclude that $\sum_t\|\Delta x_{t}\|<\infty$. The latter inequality immediately implies that $\{x_t\}$ is a Cauchy sequence and hence convergent to a point $x^*$. From the definition of $g_t$ and the facts that $\lambda_t\in \R^m_+$, $\sum_{j\in \J}(\lambda_t)_j=1$ for any $t\geq 1$  and $\{g_t\}_{t\geq 1}$ converges to zero (see Corollary~\ref{cor:conv}), we conclude that $\{\lambda_t\}$ has an accumulation point $\lambda^*\in \R^m_{+}$ such that $\sum_{j\in \J}\lambda^*_j=1$ and $\sum_{j\in \J} \lambda^*_j\nabla  F_j(x^*)=0$. Moreover, \eqref{ineq:nabla2F defpositiva} implies that $\sum_{j\in \J} \lambda^*_j\nabla^2  F_j(x^*)\succeq\mu I$  and hence $x^*$ is a local strict  minimum point of the function $q(x):=\sum_{j\in \J} \lambda^*_j F_j(x)$. Therefore, since $\lambda^*\in \R^m_{+}$ and $\sum_{j\in \J}\lambda^*_j=1$, we conclude that   
$$\max_{j\in \J}\{F_j(x)-F_j(x^*)\}\geq \sum_{j\in \J} \lambda^*_j (F_j(x)-F_j(x^*))=q(x)-q(x^*)>0,$$
for any $x$ in a neighborhood of  $x^*$. Thus, we conclude  that $x^*$ is a local Pareto solution of \eqref{vectorproblem}.
\end{proof}

The next  result establishes, in particular, the superlinear convergence of the sequence \(\{x_t\}\).

\begin{theorem}\label{theorem:superlinearconvergence}
Assume that {\bf (A1)--(A4)} hold and let $x^*$ be the limit point of $\{x_t\}$. Then,   there exist $C_1,C_2, C_3>0$ such that, for all $t\geq t_1$, there hold 
\begin{equation}\label{ineq:xi-xt<gt+1}
\|x_{t+1}-x^*\|\leq C_1\|g_{t+1}\| ,  
\end{equation} 
\begin{equation}\label{aux-eq: tilde gt}
  \left\|\tilde g_t\right\| \leq C_2 \min\{\|\Delta x_{t+1}\|, \|x_t-x^*\|\},
 \end{equation}
 \begin{equation}\label{ineq:xt+1-xbar leq gt xt-xbar}
  \left\|x_{t+1}-x^*\right\| \leq C_3 \|g_t\|\|x_t-x^*\|,
 \end{equation}
where $\tilde g_t:=\sum_{j\in \J} (\lambda_{t+1})_j\nabla F_j(x_t)$. As a consequence,  $\{x_t\}$ is superlinearly convergent  to $x^*$.
\end{theorem}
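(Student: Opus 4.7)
The plan is to establish the three inequalities in sequence and then read off superlinear convergence from \eqref{ineq:xt+1-xbar leq gt xt-xbar}. The two main ingredients are the tail-summation of displacements (inherited from the geometric contraction of $\{\|g_t\|\}$ proved in Theorem~\ref{thm:3.3}) and the approximate KKT identity from Step~1.2 of Algorithm~1, both exploited under (A4) so that $\nabla\bar F_j=\nabla F_j$.

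\textbf{Step 1 (proving \eqref{ineq:xi-xt<gt+1}).} Writing $x^*-x_{t+1}$ as the telescoping tail sum $\sum_{k\ge t+1}(x_{k+1}-x_k)$ and using the triangle inequality, one gets $\|x_{t+1}-x^*\|\le\sum_{k\ge t+1}\|\Delta x_{k+1}\|$. Inequality \eqref{eq:Deltaxt-lambdanablaFxt} with $\lambda=\lambda_k$ bounds $\|\Delta x_{k+1}\|\le(4/\mu)\|g_k\|$, and \eqref{ineq:contraction gt} gives $\|g_k\|\le 2^{-(k-t-1)}\|g_{t+1}\|$ for $k\ge t+1\ge t_1$. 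The geometric sum produces $C_1=8/\mu$.

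\textbf{Step 2 (the $\|\Delta x_{t+1}\|$ part of \eqref{aux-eq: tilde gt}).} I would write out the approximate KKT condition \eqref{eq:subprob-Alg1} at iteration $t$. Since (A4) gives $\nabla\bar F_j=\nabla F_j$ and $\nabla M_{x_t,\sigma'_t}^j(x_{t+1})=\nabla F_j(x_t)+\nabla^2\bar F_j(x_t)\Delta x_{t+1}+(\sigma'_t/2)\|\Delta x_{t+1}\|\Delta x_{t+1}$, the condition reads $\bigl\|\tilde g_t+A_{t+1}\Delta x_{t+1}+(\sigma'_t/2)\|\Delta x_{t+1}\|\Delta x_{t+1}\bigr\|\le\theta\|\Delta x_{t+1}\|^2$, where $A_{t+1}:=\sum_j(\lambda_{t+1})_j\nabla^2\bar F_j(x_t)$ and $\sigma'_t:=\alpha^{i_t}\sigma_t\le\alpha\sigma_{\max}$. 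The triangle inequality together with uniform boundedness of $\|A_{t+1}\|$ (from continuity of $\nabla^2 F_j$ near $x^*$ and $\|E^H_j(x_t)\|\to 0$) and $\|\Delta x_{t+1}\|\to 0$ (Corollary~\ref{cor:conv}) yield $\|\tilde g_t\|\le C_2'\|\Delta x_{t+1}\|$ for $t$ large.

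\textbf{Step 3 (proving \eqref{ineq:xt+1-xbar leq gt xt-xbar}).} I would apply Lemma~\ref{lem3:basic-gt+1<Deltax} with $(x,x^+,\lambda^+,\sigma)=(x_t,x_{t+1},\lambda_{t+1},\alpha^{i_t}\sigma_t)$: under (A4) the gradient residual vanishes and $\|E^H_j(x_t)\|\le\alpha\bar\kappa_H\zeta\|g_t\|$, and bounding $\|\Delta x_{t+1}\|^2\le(4/\mu)\|g_t\|\|\Delta x_{t+1}\|$ through \eqref{eq:Deltaxt-lambdanablaFxt} gives $\|g_{t+1}\|\le C'\|g_t\|\|\Delta x_{t+1}\|$ for an explicit $C'$. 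Combining with \eqref{ineq:xi-xt<gt+1} and the splitting $\|\Delta x_{t+1}\|\le\|x_t-x^*\|+\|x_{t+1}-x^*\|$ produces the self-referential bound $\|x_{t+1}-x^*\|\le C_1C'\|g_t\|(\|x_t-x^*\|+\|x_{t+1}-x^*\|)$; for $t$ large enough that $C_1C'\|g_t\|\le 1/2$ (guaranteed by $g_t\to 0$), the absorption trick yields \eqref{ineq:xt+1-xbar leq gt xt-xbar} with $C_3=2C_1C'$.

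\textbf{Step 4 (completing \eqref{aux-eq: tilde gt} and deducing superlinear convergence).} From \eqref{ineq:xt+1-xbar leq gt xt-xbar} and $\|g_t\|\to 0$ one has $\|x_{t+1}-x^*\|\le(1/2)\|x_t-x^*\|$ for $t$ large, so $\|\Delta x_{t+1}\|\le(3/2)\|x_t-x^*\|$, and Step~2 gives the $\|x_t-x^*\|$ bound of \eqref{aux-eq: tilde gt} with $C_2=\max\{C_2',3C_2'/2\}$. Finally, dividing \eqref{ineq:xt+1-xbar leq gt xt-xbar} by $\|x_t-x^*\|$ yields $\|x_{t+1}-x^*\|/\|x_t-x^*\|\le C_3\|g_t\|\to 0$, which is superlinear convergence.

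\textbf{Main obstacle.} The delicate point is Step~3: converting the self-referential inequality into a clean bound requires absorbing $\|x_{t+1}-x^*\|$ from both sides, which is valid only once $t$ is large enough that $C_1C'\|g_t\|<1/2$. This forces all constants to be uniformly controlled in $t$, which is precisely why the stronger error condition in (A4), $\|E^H_j(x_t)\|\le\tfrac{\bar\kappa_H}{\alpha^{i-1}}\min\{\|\Delta x_t\|,\zeta\|g_t\|\}$, is needed in Lemma~\ref{lem3:basic-gt+1<Deltax} to produce the factor $\|g_t\|$ in front of $\|\Delta x_{t+1}\|$—a factor that the weaker condition \eqref{ineq:ApproxGrad-Hess} of Step~1.1 would not provide.
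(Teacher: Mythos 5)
Your proposal is correct and follows essentially the same route as the paper: the geometric tail sum for \eqref{ineq:xi-xt<gt+1} with $C_1=8/\mu$, the approximate KKT residual plus boundedness of $\{\nabla^2\bar F_j(x_t)\}$ for $\|\tilde g_t\|\leq \tilde C_2\|\Delta x_{t+1}\|$, and Lemma~\ref{lem3:basic-gt+1<Deltax} under {\bf (A4)} to extract the product factor $\|g_t\|$, concluding superlinearity from $g_t\to 0$. The only difference is organizational and harmless: the paper finishes \eqref{aux-eq: tilde gt} first, absorbing via the reverse triangle inequality $\|x_t-x^*\|\geq\|\Delta x_{t+1}\|-\|x_{t+1}-x^*\|$ together with its intermediate bound $\|g_{t+1}\|\leq\tilde C_1\|\tilde g_t\|\|g_t\|$, whereas you prove \eqref{ineq:xt+1-xbar leq gt xt-xbar} first via the self-referential absorption of $\|x_{t+1}-x^*\|$ (using $\|g_{t+1}\|\leq C'\|g_t\|\|\Delta x_{t+1}\|$) and then deduce the $\|x_t-x^*\|$ part of \eqref{aux-eq: tilde gt}; your ordering is non-circular and equally valid, sharing with the paper the same mild caveat that the absorption only yields the stated inequalities for $t$ sufficiently large rather than for all $t\geq t_1$.
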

\begin{proof}
It follows from \eqref{eq:Deltaxt-lambdanablaFxt}  with $\lambda=\lambda_t$ and \eqref{ineq:contraction gt} that, for all $s\geq t+1$,
\begin{align*}
\|x_s-x_{t+1}\|&\leq \sum_{k=t+1}^s\|x_{k+1}-x_{k}\|=\sum_{k=t+1}^s\|\Delta x_{k+1}\|\nonumber\\
&\leq \frac{4}{\mu}\sum_{k=t+1}^s\|g_k\|=\frac{4}{\mu}\left[\|g_{t+1}\|+\|g_{t+2}\|+\cdots+ \|g_{s}\|\right]\nonumber\\
&\leq \frac{4}{\mu}\left[\|g_{t+1}\|+\frac{1}{2}\|g_{t+1}\|+\frac{1}{2^2}\|g_{t+1}\|+\cdots+ \frac{1}{2^{s-t-1}}\|g_{t+1}\|\right]\nonumber\\
&=\frac{4}{\mu}\left\{\sum_{l=0}^{s-t-1}\frac{1}{2^l}\|g_{t+1}\|\right\}\leq \frac{8}{\mu}\|g_{t+1}\|.
\end{align*}
Since  $\{x_t\}$ converges to $x^*$ (see Theorem~\ref{thm:3.3}), we conclude that \eqref{ineq:xi-xt<gt+1} holds with $C_1:=8/\mu.$

Now, in view of the last inequality in \eqref{eq:subprob-Alg1} and the definition of $\sigma_t$ in Step~2 of Algorithm~1, we have  $\|\sum_{j}(\lambda_{t+1})_j\nabla M^j_{x_t,\alpha\sigma_{t+1}}(x_{t+1})\|\leq \theta\|\Delta x_{t+1}\|^2$, which, combined with   the definitions of $M^j_{x_t,\alpha\sigma_{t+1}}(\cdot)$ and $\tilde g_t$, yields
\begin{equation}\label{ineq:aux-tilde gt-theta}
\left\|\tilde g_t +\sum_j(\lambda_{t+1})_j\nabla^2 \bar F_j(x_t)\Delta x_{t+1}+\alpha\sigma_{t+1}\|\Delta x_{t+1}\|\Delta x_{t+1}\right\|\leq \theta\|\Delta x_{t+1}\|^2.
\end{equation}
Note that in view of the definition of $E^H_j(\cdot)$ in \eqref{ineq:ApproxGrad-Hess}, we have
$$
\|\nabla^2 \bar  F_j(x_t)\|\leq \|\nabla^2 F_j(x_t)\|+\|E^H_j(x_t)\|,
$$
which, combined with  \eqref{ineq:ApproxGrad-Hess12} and the boundedness of $\{x_t\}$, implies that  $\{\nabla^2\bar F_j(x_t)\}$ is also bounded for any $j \in \J$. Hence, there exists $\Lambda>0$ such that $\|\nabla^2\bar F_j(x_t)\|\leq \Lambda$ for any $t\geq 0$ and $j \in \J$. Thus, \eqref{ineq:aux-tilde gt-theta} combined with the triangle inequality and the facts that $\sigma_t\leq \sigma_{max}$ and $\sum_j(\lambda_t)_j=1$, imply that
 \begin{align*}
\left\|\tilde g_t\right\| &\leq   \left\|\sum_j(\lambda_{t+1})_j\nabla^2 \bar F_j(x_t)\Delta x_{t+1}\right\|+\alpha\sigma_{t+1}\|\Delta x_{t+1}\|^2+\theta\|\Delta x_{t+1}\|^2 \\
&\leq \left(\Lambda+(\alpha\sigma_{max}+\theta)\|\Delta x_{t+1}\|\right)\|\Delta x_{t+1}\|.
 \end{align*}
 Since $\{\|\Delta x_{t}\|\}$ converges to zero, it is bounded, and then the above inequality implies that  there exists $\tilde C_2>0$ such that 
 \begin{equation}\label{aux-ineq123}
 \|\tilde g_t\|\leq \tilde C_2\|\Delta x_{t+1}\|,\qquad \forall t\geq 1.
 \end{equation}
Now,  it follows from Lemma~\ref{lem3:basic-gt+1<Deltax} with $(x,,x^+,\lambda^+,\sigma)=(x_t,x_{t+1},\lambda_{t+1},\alpha^{i_t}\sigma_t)$, assumption {\bf (A4)}, and the definitions of $g_t, \tilde g_t, g_{t+1}$, and $\Delta x_{t+1}$ given in \eqref{def:gt-Deltaxt}, that 
 \begin{align*}
\left\|g_{t+1}\right\|&\leq \frac{L+\alpha^{i_t}\sigma_t+2\theta}{2}\|\Delta x_{t+1}\|^2+ \|\Delta x_{t+1}\|\frac{\bar \kappa_{H}}{\alpha^{i_t-1}}\min\left\{ \|\Delta x_{t}\|, \zeta\left\|g_t\right\| \right\}\\
&\leq \frac{8(L+\alpha\sigma_{max}+2\theta)}{\mu^2}\|g_t\|\|\tilde g_t\|+ \frac{4\alpha\bar \kappa_{H}}{\mu} \zeta\left\|g_t\right\|\|\tilde g_t\|,
\end{align*} 
where the last inequality is due to  $\alpha^{i_t}\geq 1$,  $\sigma_{t+1}=\alpha^{i_t-1}\sigma_t$,  the second inequality in \eqref{def:sigmaMax} with $t+1$, and \eqref{eq:Deltaxt-lambdanablaFxt} twice, first with $\lambda=\lambda_{t+1}$ and second with $\lambda=\lambda_t$.
Hence, defining $\tilde C_1:=8(L+\alpha \sigma_{max}+2\theta)/\mu^2+4\alpha\bar\kappa_H\zeta/\mu$, we conclude that 
\begin{equation}\label{ineq:aux-gt+1<gtDelta xt}
\|g_{t+1}\|\leq \tilde C_1\|\tilde g_t\|\|g_t\|.
\end{equation}

Hence, \eqref{ineq:xi-xt<gt+1},  \eqref{aux-ineq123}, \eqref{ineq:aux-gt+1<gtDelta xt}, the fact that $\Delta x_{t+1}=x_{t+1}-x_t$, and the Cauchy-Schwarz inequality imply that 

\begin{align}
 \|x_t-x^*\|&\geq \|x_{t+1}-x_t\|-\|x_{t+1}- x^*\| \geq \frac{1}{\tilde C_2} \left\|\tilde g_t\right\| - C_1 \left\|g_{t+1}\right\| \\
 &= \left(\frac{1}{\tilde C_2}  -C_1\tilde C_1 \left\| g_t\right\|\right)\left\|\tilde g_t\right\|
\end{align}
Since, in view of Corollary~\ref{cor:conv}, $\{g_t\}$ converges to zero,  the above inequality implies that there exists $\hat C_2>0$ such that for any $t$ sufficiently large,
$$
\|x_t- x^*\|\geq \hat C_2\left\|\tilde g_t\right\|.$$
Hence, the above inequality combined with \eqref{aux-ineq123} imply that  \eqref{aux-eq: tilde gt} holds with  $C_2=\max\{\tilde C_2, 1/\hat C_2\}$.
Therefore, in view of \eqref{ineq:xi-xt<gt+1}, \eqref{aux-eq: tilde gt}, and \eqref{ineq:aux-gt+1<gtDelta xt}, we conclude that for all $t$ sufficiently large, we have
$$
\|x_{t+1}-x^*\|\leq C_1\|g_{t+1}\|\leq C_1\tilde C_1\left\|\tilde g_t\right\|\|g_t\|\leq {C_1\tilde C_1C_2}\|g_t\|\|x_t-x^*\|,
$$
which proves \eqref{ineq:xt+1-xbar leq gt xt-xbar} by defining $C_3= C_1\tilde C_1 C_2$.
The proof of the last statement of the theorem follows immediately from \eqref{ineq:xt+1-xbar leq gt xt-xbar} and the fact that $\{g_t\}$ converges to zero, in view of Corollary~\ref{cor:conv}.
\end{proof}

The next result shows that if the gradient and Hessian of $F_j$ are exactly computed  in  Algorithm~1, then the sequence $\{x_t\}$ is quadratically convergent to a local Pareto solution of \eqref{vectorproblem}.

\begin{theorem}\label{Theorem:quadraticConvergence} 
Let $\{x_t\}$ be generated by Algorithm~1 and assume that, for every $j\in \J$ and $t\geq 1$, $\nabla \bar F_j(x_t)= \nabla F_j(x_t)$ 
and $\nabla^2 \bar F_j(x_t)=\nabla^2  F_j(x_t)$.
 Moreover, assume that {\bf (A1)--(A3)} hold.
 Then,   there exists $D>0$ such that for all $t$ sufficiently large, we have
 \begin{equation}\label{ineq:quadratic convergence}
  \left\|x_{t+1}-x^*\right\| \leq D \|x_t- x^*\|^2.
 \end{equation}
\end{theorem}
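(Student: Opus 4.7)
The plan is to reduce the quadratic bound to a refinement of \eqref{ineq:xi-xt<gt+1} that holds when the derivatives are computed exactly. First I would verify that the hypotheses of \cref{theorem:superlinearconvergence} are in force: with $\nabla \bar F_j=\nabla F_j$ and $\nabla^2\bar F_j=\nabla^2 F_j$, we have $E^G_j\equiv 0$ and $E^H_j\equiv 0$, so assumption \textbf{(A4)} is trivially satisfied (with any $\bar\kappa_H\geq 0$ and $\zeta>0$). In particular, $\{x_t\}$ converges to a local Pareto solution $x^*$ and inequality \eqref{ineq:xi-xt<gt+1}, namely $\|x_{t+1}-x^*\|\leq C_1\|g_{t+1}\|$, is available for all sufficiently large $t$.

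The core step is then to sharpen the bound on $\|g_{t+1}\|$. I would apply \cref{lem3:basic-gt+1<Deltax} with $(x,x^+,\lambda^+,\sigma)=(x_t,x_{t+1},\lambda_{t+1},\alpha^{i_t}\sigma_t)$; since $E^G_j(x_t)=0$ and $E^H_j(x_t)=0$, the residual terms in the right-hand side of the lemma vanish, leaving
\begin{equation*}
\|g_{t+1}\|\leq \frac{L+\alpha^{i_t}\sigma_t+2\theta}{2}\|\Delta x_{t+1}\|^2 \leq \frac{L+\alpha\sigma_{max}+2\theta}{2}\|\Delta x_{t+1}\|^2,
\end{equation*}
where I used $\sigma_{t+1}=\alpha^{i_t-1}\sigma_t$ together with the upper bound \eqref{def:sigmaMax}.

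Next I would bound $\|\Delta x_{t+1}\|$ by $\|x_t-x^*\|$. By the triangle inequality, $\|\Delta x_{t+1}\|\leq \|x_{t+1}-x^*\|+\|x_t-x^*\|$, and from the superlinear convergence established in \cref{theorem:superlinearconvergence} we have $\|x_{t+1}-x^*\|\leq \|x_t-x^*\|$ for all sufficiently large $t$, so $\|\Delta x_{t+1}\|\leq 2\|x_t-x^*\|$. Chaining this with the previous display and \eqref{ineq:xi-xt<gt+1} yields
\begin{equation*}
\|x_{t+1}-x^*\|\leq C_1\|g_{t+1}\|\leq 2C_1(L+\alpha\sigma_{max}+2\theta)\|x_t-x^*\|^2,
\end{equation*}
establishing \eqref{ineq:quadratic convergence} with $D:=2C_1(L+\alpha\sigma_{max}+2\theta)$.

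I do not foresee a serious obstacle: the bounds from \cref{lem3:basic-gt+1<Deltax} collapse cleanly under exact derivatives, and the remaining ingredients (boundedness of $\sigma_t$, the estimate $\|x_{t+1}-x^*\|\lesssim \|g_{t+1}\|$, and superlinear convergence of $\{x_t\}$) are already in place from the previous subsection. The only point demanding mild care is ensuring that $t$ is large enough so that both \eqref{ineq:xi-xt<gt+1} and the inequality $\|x_{t+1}-x^*\|\leq \|x_t-x^*\|$ hold simultaneously, which is immediate from the convergence $g_t\to 0$ shown in \cref{cor:conv} combined with \cref{theorem:superlinearconvergence}.
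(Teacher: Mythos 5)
Your proof is correct, and it follows the same skeleton as the paper's: verify that exact derivatives make \textbf{(A4)} hold trivially (so all of Theorem~\ref{theorem:superlinearconvergence} is available — the paper does this implicitly as well), collapse Lemma~\ref{lem3:basic-gt+1<Deltax} to $\|g_{t+1}\|\leq \tfrac{L+\alpha\sigma_{max}+2\theta}{2}\|\Delta x_{t+1}\|^2$, and close with $\|x_{t+1}-x^*\|\leq C_1\|g_{t+1}\|$ from \eqref{ineq:xi-xt<gt+1}. The one place you diverge is the intermediate bound $\|\Delta x_{t+1}\|\lesssim \|x_t-x^*\|$: the paper routes this through the auxiliary quantity $\tilde g_t=\sum_{j\in\J}(\lambda_{t+1})_j\nabla F_j(x_t)$, first getting $\|\Delta x_{t+1}\|\leq (4/\mu)\|\tilde g_t\|$ from \eqref{eq:Deltaxt-lambdanablaFxt} with $\lambda=\lambda_{t+1}$ and then invoking \eqref{aux-eq: tilde gt}, whereas you use the elementary estimate $\|\Delta x_{t+1}\|\leq \|x_{t+1}-x^*\|+\|x_t-x^*\|\leq 2\|x_t-x^*\|$, justified by the eventual contraction $\|x_{t+1}-x^*\|\leq\|x_t-x^*\|$, which indeed follows from \eqref{ineq:xt+1-xbar leq gt xt-xbar} once $C_3\|g_t\|\leq 1$ (and the degenerate case $x_t=x^*$ is also covered by that inequality). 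Your route is slightly more self-contained at this step — it bypasses $\tilde g_t$ and \eqref{aux-eq: tilde gt} entirely and yields the explicit constant $D=2C_1(L+\alpha\sigma_{max}+2\theta)$ — while the paper's version reuses machinery already assembled for the superlinear theorem and obtains $D=C_1\tilde C C_2^2$; the two arguments are otherwise equivalent in strength, and your only obligation, which you correctly flag, is to take $t$ large enough that \eqref{ineq:xi-xt<gt+1} and the contraction hold simultaneously. As a minor aside, your use of $\alpha\sigma_{max}$ for the regularization bound $\alpha^{i_t}\sigma_t=\alpha\sigma_{t+1}\leq\alpha\sigma_{max}$ is actually the careful choice; the paper's proof writes $\sigma_{max}$ there.
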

\begin{proof}
Let $\tilde g_t:=\sum_{j \in \J}(\lambda_{t+1})_j\nabla F_j(x_t)$. Hence,   \eqref{eq:Deltaxt-lambdanablaFxt}   with $\lambda=\lambda_{t+1}$ implies that $\|\Delta x_{t+1}\|\leq (4/\mu)\|\tilde g_t\|$. Thus,  in view of Lemma~\ref{lem3:basic-gt+1<Deltax},  the facts that $\sigma_t\leq \sigma_{max}$  and  $E^G_j(x_t)=0$ and $E^H_j(x_t)=0$ (See the assumptions of the theorem and \eqref{definition-approxGrad-Hessian}), we have  
\begin{equation*}
\left\|g_{t+1}\right\|\leq \frac{L+\sigma_{max}+2\theta}{2}\|\Delta x_{t+1}\|^2\leq \frac{16}{\mu^2}\left(\frac{L+\sigma_{max}+2\theta}{2}\right)\|\tilde g_t\|^2.
\end{equation*}
Hence, defining $\tilde C:=8(L+\sigma_{max}+2\theta)/\mu^2$, we have
$ \|g_{t+1}\|\leq {\tilde C}\|\tilde g_t\|^2.$
Thus, it follows from     \eqref{ineq:xi-xt<gt+1},  \eqref{aux-eq: tilde gt}, and the latter inequality that 
$$
\|x_{t+1}-x^*\|\leq C_1\|g_{t+1}\|\leq C_1\tilde C\|\tilde g_t\|^2\leq C_1\tilde C C_2^2\|x_t-x^*\|^2.
$$
Therefore, \eqref{ineq:quadratic convergence} follows by defining $D:=C_1\tilde C C_2^2$.
\end{proof}

\subsection{Approximations of the gradients and Hessians using finite differences}\label{sec:approximation}

In this section, we discuss implementations of the M-CRM where the approximations of the gradient and Hessian of each function component \( F_j \) are computed using finite difference procedures; see, for example,  \cite{nocedal,Geovanni23,Geovanizero}.  We provide bounds for the derivative approximations, whose proofs are given in the Appendix, to ensure the completeness of our presentation.
  { 
It is worth noting that in these implementations, the constants \(\bar \kappa_{G}\) and \(\bar \kappa_{H}\)  depend on the Lipschitz constants \( L_j \). However, knowing the exact values of these constants are not necessary, as the first and/or second  conditions in \eqref{ineq:ApproxGrad-Hess} are trivially satisfied.
}

\subsubsection{Approximations for the gradient using finite differences}

In this subsection, we discuss sufficient conditions under which the first condition in \eqref{ineq:ApproxGrad-Hess} is met by suitable finite-difference approximations of \(\nabla F_j\). We begin by discussing the approximations of \(\nabla F_j\) using central finite differences under hypothesis {\bf (A1)}.

\begin{lemma}
\label{lem:2.378}
Suppose that {\bf (A1)} holds. Given $x\in\mathbb{R}^{n}$ and $h>0$, let $\nabla \bar F_j(x)\in\mathbb{R}^{n}$ be defined by
\begin{equation}
\nabla \bar F_j(x)=\left(\dfrac{ F_j(x+he_{1})- F_j(x-he_{1})}{2h},\ldots,\dfrac{ F_j(x+he_{n})- F_j(x-he_{n})}{2h}\right), \quad \forall j \in \J.
\label{eq:2.147814}
\end{equation}
Then, for every $j \in \J$,  we have
\begin{equation}
\|E^G_{j}(x)\|\leq\dfrac{\sqrt{n}L_j}{6}h^2,
\label{eq:2.17mais1}
\end{equation}
where $E^G_{j}(x)$ is as  in \eqref{definition-approxGrad-Hessian}.
As a consequence, by taking $h:={\sqrt{6} \|x_t-x_{t-1}\|}/(n^{\beta}\alpha^{(i-1)})^{1/2}$, with $\beta \geq 0$, we have   $\nabla \bar F_j(x_t)$  satisfies the first inequality in \eqref{ineq:ApproxGrad-Hess} with $\bar \kappa_{G}:=Ln^{(1-2\beta)/2}$, where $L:=\max_{j\in \J} L_j$. 
\end{lemma}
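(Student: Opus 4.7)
The plan is to exploit the cubic Taylor bound already stated in \eqref{eq:2.2} componentwise for the two perturbations $x\pm h e_i$, and then aggregate the $n$ coordinate-wise errors into an $\ell_2$ bound.

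First, I would apply \eqref{eq:2.2} with $y:=x+he_i$ and then with $y:=x-he_i$, for each $i\in\{1,\dots,n\}$ and each $j\in\J$. This yields
\[
\bigl|F_j(x\pm he_i)-F_j(x)\mp h\langle \nabla F_j(x),e_i\rangle-\tfrac{h^2}{2}\langle \nabla^2 F_j(x) e_i,e_i\rangle\bigr|\leq \tfrac{L_j}{6}h^{3}.
\]
Subtracting the ``$-$'' inequality from the ``$+$'' inequality, the quadratic terms (which are identical for $\pm he_i$) cancel, while the gradient terms add, yielding
\[
\bigl|F_j(x+he_i)-F_j(x-he_i)-2h\langle \nabla F_j(x),e_i\rangle\bigr|\leq \tfrac{L_j}{3}h^{3}.
\]
Dividing by $2h$ and recognising the $i$-th component of $\nabla\bar F_j(x)$ from \eqref{eq:2.147814} gives, for every $i$,
\[
\bigl|[\nabla\bar F_j(x)]_i-\langle \nabla F_j(x),e_i\rangle\bigr|\leq \tfrac{L_j}{6}h^{2}.
\]

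Next, recalling that $E^G_j(x)=\nabla F_j(x)-\nabla\bar F_j(x)$, I would square these $n$ componentwise bounds and sum: each component contributes at most $L_j^2 h^4/36$, so $\|E^G_j(x)\|^2\leq n L_j^2 h^4/36$, which upon taking square roots establishes \eqref{eq:2.17mais1}.

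Finally, for the ``as a consequence'' part, I would substitute the prescribed stepsize $h=\sqrt{6}\,\|x_t-x_{t-1}\|/(n^{\beta}\alpha^{i-1})^{1/2}$, so that $h^{2}=6\|x_t-x_{t-1}\|^2/(n^{\beta}\alpha^{i-1})$. Plugging into \eqref{eq:2.17mais1} and using $L_j\leq L$ gives
\[
\|E^G_j(x_t)\|\leq \tfrac{\sqrt{n}L}{6}\cdot\tfrac{6\|x_t-x_{t-1}\|^2}{n^{\beta}\alpha^{i-1}}=\tfrac{L\,n^{(1-2\beta)/2}}{\alpha^{i-1}}\|x_t-x_{t-1}\|^2,
\]
which matches the first inequality in \eqref{ineq:ApproxGrad-Hess} with $\bar\kappa_G=Ln^{(1-2\beta)/2}$, as claimed. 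There is no real obstacle here beyond the careful cancellation of the Hessian terms in the symmetric difference; the $\sqrt{n}$ factor is the expected price of converting $n$ coordinate-wise bounds into an $\ell_2$ bound, and the specific exponent $(1-2\beta)/2$ emerges transparently from the stepsize scaling.
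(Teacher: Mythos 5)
Your proposal is correct and follows essentially the same route as the paper: apply the cubic Taylor bound \eqref{eq:2.2} at $x\pm he_i$, combine the two estimates so that the Hessian terms cancel in the symmetric difference, divide by $2h$ to get the componentwise bound $L_j h^2/6$, aggregate into the $\ell_2$ bound with the $\sqrt{n}$ factor, and substitute the prescribed $h$. The only cosmetic point is that ``subtracting the inequalities'' should be read as forming the difference of the two Taylor remainders and applying the triangle inequality, which is exactly how the paper's appendix phrases the same step.
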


We now discuss  suitable finite-difference approximations of $\nabla F_j$ under an additional hypothesis, which trivially holds when the gradient $\nabla F_j$ is Lipschitz continuous.
The first and second approaches use forward and backward finite differences, while the third employs central finite differences.

\begin{lemma} \label{lem:2.37890}
Assume that, for every $j\in \J$,   there exists $\bar L_j>0$ such that 
\begin{equation}\label{eq:o90}
|F_j(y)- F_j(x)-\langle\nabla F_j(x),y-x\rangle|\leq 
\frac{\bar L_j}{2} \|y - x\|^2, \quad  \forall  x, y \in \R^n.
\end{equation}
 Given $x\in\mathbb{R}^{n}$ and $h>0$, let $\nabla \bar F_j(x)\in\mathbb{R}^{n}$ be defined, for every $ j \in \J$, by
\begin{equation}
\nabla \bar F_j(x)=\left(\dfrac{ F_j(x+he_{1})- F_j(x)}{h},\ldots,\dfrac{ F_j(x+he_{n})- F_j(x)}{h}\right),
\label{eq:2.14781o0}
\end{equation}
or
\begin{equation}
\nabla \bar F_j(x)=\left(\dfrac{ F_j(x)-F_j(x-he_{1})}{h},\ldots,\dfrac{ F_j(x)-F_j(x-he_{n})}{h}\right),
\label{eq:2.14781o01}
\end{equation}
or 
\begin{equation}
\nabla \bar F_j(x)=\left(\dfrac{ F_j(x+he_{1})- F_j(x-he_{1})}{2h},\ldots,\dfrac{ F_j(x+he_{n})- F_j(x-he_{n})}{2h}\right).
\label{eq:2.147814ji}
\end{equation}
Then, for every $j \in \J$,  we have
\begin{equation}
\|E^G_{j}(x)\|\leq\dfrac{\sqrt{n}\bar L_j}{2}h,
\label{eq:2.17mais2345}
\end{equation}
where $E^G_{j}(x)$ is as  in \eqref{definition-approxGrad-Hessian}. As a consequence, by taking $h:={2 \|x_t-x_{t-1}\|^2}/({n^{\beta}\alpha^{(i-1)}})$,  with $\beta \geq 0$, we have   $\nabla \bar F_j(x_t)$  satisfies the first inequality in \eqref{ineq:ApproxGrad-Hess} with $\bar \kappa_{G}:=\bar Ln^{(1-2\beta)/2}$, where $\bar L:=\max_{j\in \J} \bar L_j$. 
\end{lemma}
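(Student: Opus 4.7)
The plan is to establish the Euclidean norm bound \eqref{eq:2.17mais2345} component-by-component from hypothesis \eqref{eq:o90}, handling the three finite-difference formulas in turn, and then substitute the prescribed step size to extract the constant $\bar\kappa_G$.

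For the forward-difference formula \eqref{eq:2.14781o0}, the $k$-th entry of $\nabla \bar F_j(x)$ is $[F_j(x+he_k)-F_j(x)]/h$, while the $k$-th entry of $\nabla F_j(x)$ equals $\langle \nabla F_j(x), e_k\rangle$. Applying \eqref{eq:o90} with $y := x + h e_k$ yields
\[ |F_j(x+he_k)-F_j(x)-h\langle \nabla F_j(x),e_k\rangle| \leq \frac{\bar L_j}{2}h^2, \]
and dividing by $h$ bounds the per-component error by $\bar L_j h/2$. The backward-difference case \eqref{eq:2.14781o01} is identical upon taking $y := x - h e_k$. For the central difference \eqref{eq:2.147814ji}, I would invoke \eqref{eq:o90} twice, once with $y := x + h e_k$ and once with $y := x - h e_k$, and then subtract the two inequalities so that the linear terms $\pm h\langle \nabla F_j(x), e_k\rangle$ reinforce while the quadratic right-hand sides add; dividing by $2h$ again produces a per-component error of at most $\bar L_j h/2$. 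Summing the squared component errors over $k = 1, \ldots, n$ and taking the square root yields \eqref{eq:2.17mais2345}.

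For the consequence, I would substitute $h = 2\|x_t - x_{t-1}\|^2/(n^\beta \alpha^{i-1})$ into \eqref{eq:2.17mais2345}: the factor $2$ in $h$ cancels the $1/2$ from the bound, and $\sqrt{n}/n^\beta$ simplifies to $n^{(1-2\beta)/2}$, leaving $\|E_j^G(x_t)\| \leq \bar L_j n^{(1-2\beta)/2}\|x_t - x_{t-1}\|^2/\alpha^{i-1}$. Taking $\bar L := \max_{j \in \J} \bar L_j$ then identifies $\bar\kappa_G := \bar L n^{(1-2\beta)/2}$, matching the first inequality in \eqref{ineq:ApproxGrad-Hess}.

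The proof is essentially routine; the only place where some care is required is in the central-difference case, where the two instances of \eqref{eq:o90} must be subtracted (not added) so that the linear terms combine correctly. Note that, unlike Lemma \ref{lem:2.378}, the weaker hypothesis \eqref{eq:o90} yields only an $O(h)$ per-component error rather than the $O(h^2)$ rate achievable under the Lipschitz-Hessian assumption via \eqref{eq:2.2}; this is precisely why the prescribed stepsize here must scale quadratically as $h \sim \|x_t - x_{t-1}\|^2$ rather than as $\|x_t - x_{t-1}\|$ in order to match the error condition \eqref{ineq:ApproxGrad-Hess}.
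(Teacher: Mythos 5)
Your proposal is correct and follows essentially the same route as the paper's proof: apply \eqref{eq:o90} componentwise with $y = x \pm h e_k$, handle the central difference by forming the difference of the two Taylor remainders and bounding it via the triangle inequality (so the linear terms combine into $2h\langle \nabla F_j(x), e_k\rangle$ while the error bounds add to $\bar L_j h^2$), aggregate the per-component bounds to obtain the $\sqrt{n}$ factor, and substitute the prescribed $h$ to identify $\bar\kappa_G = \bar L\, n^{(1-2\beta)/2}$. The only cosmetic remark is that one does not literally ``subtract the two inequalities'' but rather subtracts the remainder expressions inside the absolute values and then applies the triangle inequality---which is precisely what your parenthetical about the right-hand sides adding describes, so the argument is sound.
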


{
\begin{remark}
The power \(\beta\) that appears in the definition of \(h\) in the  lemmas of this section can be used to prevent \(h\) from becoming too small, thereby avoiding instability in the finite-difference approximation of \(\nabla F_j\).
\end{remark}
}

\subsubsection{Approximations for the Hessian using finite differences}
Here, we discuss sufficient conditions  under which the second condition in \eqref{ineq:ApproxGrad-Hess} is satisfied by suitable finite-difference approximations of $\nabla^2 F_j$.
We begin by discussing the approximations of \(\nabla^2 F_j\) using  finite differences based on gradient values.

\begin{lemma}
\label{lem:2.3}
Suppose that {\bf (A1)} holds. Given $x\in\mathbb{R}^{n}$ and $h>0$, let $A_j\in\mathbb{R}^{n\times n}$ be defined, for every $j \in \J$, by
\begin{equation}
A_j=\left[\dfrac{\nabla F_j(x+he_{1})-\nabla F_j(x)}{h},\ldots,\dfrac{\nabla F_j(x+he_{n})-\nabla F_j(x)}{h}\right],
\label{eq:2.14}
\end{equation}
or 
\begin{equation}
A_j=\left[\dfrac{\nabla F_j(x)-\nabla F_j(x-he_{1})}{h},\ldots,\dfrac{\nabla F_j(x)-\nabla F_j(x-he_{n})}{h}\right],
\label{eq:2.141}
\end{equation}
or
\begin{equation}
A_j=\left[\dfrac{\nabla F_j(x+he_{1})-\nabla F_j(x-he_{1})}{2h},\ldots,\dfrac{\nabla F_j(x+he_{n})-\nabla F_j(x-he_{1})}{2h}\right].
\label{eq:2.142}
\end{equation}
Then, for every $j \in \J$,  the matrix
\begin{equation}
\nabla^2 \bar F_j(x):=\dfrac{1}{2}\left(A_j+A_j^{T}\right)
\label{eq:2.16}
\end{equation}
satisfies
\begin{equation}
\|E^H_{j}(x)\|\leq\dfrac{\sqrt{n}L_j}{2}h,
\label{eq:2.17mais}
\end{equation}
where $E^H_{j}(x)$ is as  in \eqref{definition-approxGrad-Hessian}. As a consequence, by taking $h:={2 \|x_t-x_{t-1}\|}/{(n^\beta\alpha^{i-1})}$,   with $\beta \geq 0$, we have   $\nabla^2 \bar F_j(x_t)$  satisfies the second inequality in \eqref{ineq:ApproxGrad-Hess} with $\bar \kappa_{H}:=Ln^{(1-2\beta)/2}$, where $L:=\max_{j\in \J} L_j$. 
\end{lemma}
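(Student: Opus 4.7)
The plan is to reduce the matrix-level bound on $E^H_j(x) = \nabla^2 F_j(x) - \nabla^2 \bar F_j(x)$ to a column-by-column bound on $A_j - \nabla^2 F_j(x)$, invoke the standard Taylor-type inequality \eqref{eq:2.3} on each column, aggregate via the Frobenius norm, and then check that symmetrization does not inflate the error. Finally, the claim about the constant $\bar\kappa_H$ follows by substituting the prescribed choice of $h$.

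First I would treat the forward-difference definition \eqref{eq:2.14}. The $k$-th column of $A_j - \nabla^2 F_j(x)$ equals $h^{-1}[\nabla F_j(x+he_k) - \nabla F_j(x) - h\nabla^2 F_j(x) e_k]$. Applying \eqref{eq:2.3} with $y = x + h e_k$ and $\|y - x\| = h$ directly yields that this column has norm at most $L_j h/2$. The backward-difference case \eqref{eq:2.141} is identical after applying \eqref{eq:2.3} with $y = x - he_k$. For the central-difference case \eqref{eq:2.142}, I would apply \eqref{eq:2.3} twice, once at $x + he_k$ and once at $x - he_k$, and subtract; the two $\nabla^2 F_j(x) e_k$ contributions add coherently while the two error terms of size $L_j h^2/2$ add incoherently through the triangle inequality, so dividing the bound $L_j h^2$ by $2h$ again gives a per-column bound of $L_j h/2$. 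Thus in all three cases,
\begin{equation*}
\|(A_j - \nabla^2 F_j(x)) e_k\| \leq \frac{L_j}{2}\,h, \quad k = 1,\ldots,n.
\end{equation*}

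Next I would aggregate across columns. Since the Frobenius norm squared is the sum of the squared column norms, $\|A_j - \nabla^2 F_j(x)\|_F \leq \sqrt{n}\,L_j h/2$, and the spectral norm is bounded by the Frobenius norm. To pass to the symmetrized approximation \eqref{eq:2.16}, I use that $\nabla^2 F_j(x)$ is symmetric, write
\begin{equation*}
E^H_j(x) = \nabla^2 F_j(x) - \tfrac{1}{2}(A_j + A_j^T) = \tfrac{1}{2}\bigl(\nabla^2 F_j(x) - A_j\bigr) + \tfrac{1}{2}\bigl(\nabla^2 F_j(x) - A_j\bigr)^T,
\end{equation*}
and conclude by the triangle inequality and the fact that the spectral norm is invariant under transposition that $\|E^H_j(x)\| \leq \|A_j - \nabla^2 F_j(x)\| \leq \sqrt{n}\,L_j h/2$, which is \eqref{eq:2.17mais}.

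For the consequence, I substitute $h = 2\|x_t - x_{t-1}\|/(n^\beta \alpha^{i-1})$ into \eqref{eq:2.17mais}: the factor $\sqrt{n}/2$ times $h$ produces $n^{(1-2\beta)/2}\|x_t - x_{t-1}\|/\alpha^{i-1}$, and bounding $L_j \leq L$ yields the second inequality in \eqref{ineq:ApproxGrad-Hess} with $\bar\kappa_H := L n^{(1-2\beta)/2}$. The only mildly delicate step is the handling of symmetrization (step 3); the Taylor-type bound on each column and the subsequent Frobenius aggregation are entirely routine given \eqref{eq:2.3}, and the three variants of $A_j$ require only minimal additional bookkeeping in the central-difference case.
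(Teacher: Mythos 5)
Your proposal is correct and follows essentially the same route as the paper's proof: per-column application of \eqref{eq:2.3} (with the triangle inequality handling the central-difference case), aggregation via the Frobenius norm, and substitution of the prescribed $h$. Your only addition is that you spell out explicitly why symmetrization does not inflate the error—writing $E^H_j(x)$ as the symmetric part of $\nabla^2 F_j(x)-A_j$ and using transpose-invariance of the spectral norm—whereas the paper simply asserts $\|E^H_j(x)\|\leq\|A_j-\nabla^{2}F_j(x)\|$; this is a welcome clarification, not a different argument.
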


{
\begin{remark} \label{rem2}
We note that if the gradients are exact (in which case we can consider \(\bar{\kappa}_{G} = 0\)) and the Hessians are approximated as in Lemma~\ref{lem:2.3}, it follows from Theorem~\ref{thm:sumNablaF_j<-Delta0} that the M-CRM requires at most \(\mathcal{O}\left(n^{3(1-2\beta)/4}\varepsilon^{-\frac{3}{2}}\right)\) iterations to generate an \(\varepsilon\)-approximate Pareto critical point for problem~\eqref{vectorproblem}, with $\beta \in [0,1/2]$. Additionally, since each inner iteration in this case involves \((n+2)m\) function and gradient evaluations, it follows from Lemma~\ref{lem:3.1} that the total number \(\delta_{T}\) of evaluations of \(F_j(\,\cdot\,)\) and \(\nabla F_j(\,\cdot\,)\) required to generate an \(\varepsilon\)-approximate Pareto critical point is \(\mathcal{O}\left(mn^{(7-6\beta)/4}\varepsilon^{-\frac{3}{2}}\right)\).
Moreover, if  \(m = 1\) (scalar case) and \(\beta = 1/2\), we recover the complexity results of the CRM from \cite[Theorem~2 and Corollary~1]{Geovanni23}.
\end{remark}
}

We next discuss  the approximation of \(\nabla^2 F_j\) using finite differences based on  the function values.

\begin{lemma}
\label{lem:2.356457}
Suppose that {\bf (A1)} holds. Given $x\in\mathbb{R}^{n}$ and $h>0$, let $A_j\in\mathbb{R}^{n\times n}$ be defined, for every $ j \in \J$, by
{\small
\begin{equation}
(A_j)_{il}=\left[\dfrac{ F_j(x+h(e_{i}+e_l))-F_j(x+h(e_{i}-e_l))-F_j(x+h(e_l-e_{i}))+F_j(x-h(e_{i}+e_l))}{4h^2} \right],
\label{eq:2.14899008745}
\end{equation}}
or
\begin{equation}
(A_j)_{il}=\left[\dfrac{ F_j(x+h(e_{i}+e_l))-F_j(x+he_{i})-F_j(x+he_{l})+F_j(x)}{h^2} \right],
\label{eq:2.148990}
\end{equation}
for all $i,l= 1, \ldots,n$. Then, for every $j \in \J$,  the matrix
\begin{equation}
\nabla^2 \bar F_j(x):=\dfrac{1}{2}\left(A_j+A_j^{T}\right)
\label{eq:2.1654}
\end{equation}
satisfies
\begin{equation}
\|E^H_{j}(x)\|\leq\dfrac{2nL_j}{3}h.
\label{eq:2.17mais34}
\end{equation}
where $E^H_{j}(x)$ is as  in \eqref{definition-approxGrad-Hessian}. As a consequence, by taking $h:={3 \|x_t-x_{t-1}\|}/{(2n^\beta \alpha^{i-1})}$,  with $\beta \geq 0$, we have   $\nabla^2 \bar F_j(x_t)$  satisfies the second inequality in \eqref{ineq:ApproxGrad-Hess} with $\bar \kappa_{H}:=Ln^{1-\beta}$, where $L:=\max_{j\in \J} L_j$. 
\end{lemma}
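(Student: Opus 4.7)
The plan is to reduce the lemma to an entrywise estimate on $(A_j)_{il} - (\nabla^2 F_j(x))_{il}$ obtained from the second-order Taylor bound \eqref{eq:2.2}, and then pass from the entrywise estimate to the spectral-norm bound claimed in \eqref{eq:2.17mais34} via a standard norm inequality. Concretely, writing $H := \nabla^2 F_j(x)$ and expanding each function value $F_j(x + hv)$ appearing in \eqref{eq:2.14899008745} or \eqref{eq:2.148990} as $F_j(x) + h\langle \nabla F_j(x), v\rangle + \tfrac{h^2}{2}v^T H v + R(v)$ with $|R(v)| \leq \tfrac{L_j}{6}(h\|v\|)^3$, one observes that the constant and linear contributions cancel in both formulas, and the quadratic part produces exactly $4h^2 H_{il}$ in \eqref{eq:2.14899008745} (using $(e_i+e_l)^T H (e_i+e_l) - (e_i-e_l)^T H (e_i-e_l) = 4H_{il}$) and $h^2 H_{il}$ in \eqref{eq:2.148990}. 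Dividing by the denominator isolates $H_{il}$ plus a controlled remainder.

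The second step is to bound the leftover Taylor remainders, which is where the constant $\tfrac{2L_j}{3}$ per entry should be extracted. For the central-type formula \eqref{eq:2.14899008745}, the worst case occurs on the diagonal ($i=l$, $v = \pm 2e_i$): the four remainders sum with modulus at most $2\cdot \tfrac{L_j(2h)^3}{6}$, and dividing by $4h^2$ yields $|(A_j)_{ii}-H_{ii}|\leq \tfrac{2L_j h}{3}$; off-diagonal entries ($\|e_i\pm e_l\|^3=2\sqrt{2}$) give a strictly smaller constant. For the forward-type formula \eqref{eq:2.148990}, the direct triangle-inequality bound is weaker, so one rewrites the remainder in integral form $R(v)=h^2\!\int_0^1(1-s)v^T[\nabla^2 F_j(x+shv)-H]v\,ds$ and exploits the Lipschitz bound $\|\nabla^2 F_j(x+shv)-H\|\leq L_j sh\|v\|$ inside the combination $R_1-R_2-R_3$ so that cancellations yield the same $\tfrac{2L_j h}{3}$ per-entry estimate. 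The symmetrization step in \eqref{eq:2.1654} does not enlarge the error, since $H$ is symmetric and so $|(\tfrac12(A_j+A_j^T))_{il}-H_{il}|\leq \tfrac12(|(A_j)_{il}-H_{il}|+|(A_j)_{li}-H_{li}|)$ is still controlled by the same constant.

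The third step passes from entries to the norm: since $E^H_j(x)$ is symmetric of order $n$, its operator norm is dominated by the Frobenius norm, hence by $n\max_{i,l}|(E^H_j(x))_{il}| \leq n\cdot\tfrac{2L_j h}{3}$, which gives \eqref{eq:2.17mais34}. Finally, for the ``as a consequence'' statement, I would substitute $h = 3\|x_t-x_{t-1}\|/(2n^{\beta}\alpha^{i-1})$ into \eqref{eq:2.17mais34}, obtaining
\[
\|E^H_j(x_t)\|\leq \frac{2nL_j}{3}\cdot\frac{3\|x_t-x_{t-1}\|}{2n^{\beta}\alpha^{i-1}} = \frac{L_j n^{1-\beta}}{\alpha^{i-1}}\|x_t-x_{t-1}\| \leq \frac{Ln^{1-\beta}}{\alpha^{i-1}}\|x_t-x_{t-1}\|,
\]
which is precisely the second inequality in \eqref{ineq:ApproxGrad-Hess} with $\bar\kappa_H:=Ln^{1-\beta}$.

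The main obstacle is step two for the forward-type formula \eqref{eq:2.148990}: a crude triangle inequality on the Taylor remainders inflates the constant well beyond $2/3$, so one must keep the remainders in integral form and use the Lipschitz continuity of the Hessian to obtain enough cancellation between the three terms $R_1,R_2,R_3$ (or, equivalently, rewrite \eqref{eq:2.148990} as a central second difference at the shifted base point $x+\tfrac{h}{2}(e_i+e_l)$ with step $h/2$ and bound the perturbation caused by the shift via \eqref{eq:2.3}). Once this refined entrywise bound is in place, the remaining steps are routine.
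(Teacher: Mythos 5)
Your overall architecture coincides with the paper's own proof: entrywise Taylor cancellation via \eqref{eq:2.2}, the observation that symmetrization \eqref{eq:2.1654} does not enlarge the error, and the passage $\|E^H_j(x)\|\leq n\max_{i,l}|(A_j-\nabla^2F_j(x))_{il}|$. Your treatment of the central-type formula \eqref{eq:2.14899008745} is exactly the paper's computation, including the (correct) identification of the diagonal case $i=l$, where $\|e_i+e_l\|^3=8$, as the one that forces the constant $\tfrac{8}{3}L_jh^3$ before division by $4h^2$; your final substitution of $h$ to obtain $\bar\kappa_H=Ln^{1-\beta}$ is also the paper's.

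The genuine gap is in your second step for the forward-type formula \eqref{eq:2.148990}: the per-entry bound $\tfrac{2L_j}{3}h$ you claim to recover through integral-form cancellations is provably false on the diagonal, so no refinement of that step can succeed. Take $n=1$ and $F_j(u)=u^3/6$, for which {\bf (A1)} holds with $L_j=1$; at $x=0$, formula \eqref{eq:2.148990} with $i=l=1$ gives $(A_j)_{11}=\bigl(F_j(2h)-2F_j(h)+F_j(0)\bigr)/h^2=h$, while $\nabla^2F_j(0)=0$, so $\|E^H_j(0)\|=h=L_jh>\tfrac{2}{3}L_jh$. This is sharp: the identity $h^2(A_j)_{ii}=\int_0^h\!\int_0^h\bigl[\nabla^2F_j(x+(s+t)e_i)\bigr]_{ii}\,ds\,dt$ shows the diagonal error is bounded by, and can equal, $L_jh$. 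Your fallback recentering identity is correct as algebra --- \eqref{eq:2.148990} is indeed the central difference at base point $x+\tfrac{h}{2}(e_i+e_l)$ with step $h/2$ --- but the shift of the Hessian from $x+\tfrac{h}{2}(e_i+e_l)$ back to $x$ costs $L_j\tfrac{h}{2}\|e_i+e_l\|$, which on the diagonal equals $L_jh$, again exceeding the target constant; so that route yields at best $\|E^H_j(x)\|\leq nL_jh$ for \eqref{eq:2.148990}, not \eqref{eq:2.17mais34}. In fairness, the paper itself does not prove this case: it disposes of \eqref{eq:2.148990} with ``similar arguments'' and a citation to \cite[Lemma~3.3]{Geovanizero}, and the one-dimensional example above shows that the stated constant $\tfrac{2nL_j}{3}$ cannot hold verbatim for that formula (for $n=1$ it is already violated), so the obstruction you flagged is real; the honest resolutions are either to restrict the lemma to \eqref{eq:2.14899008745}, or to prove \eqref{eq:2.148990} with the larger constant $nL_jh$ and rescale $h$ accordingly, which still yields $\bar\kappa_H=\mathcal{O}(Ln^{1-\beta})$ and leaves the complexity consequences intact.
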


In the following, we provide a derivative-free implementation of the M-CRM.
\\[4mm]
\textbf{Algorithm 2.} A derivative-free variant of the M-CRM 
\\[0.2cm]
\textbf{Step 0.} Choose $x_{0},x_{1}\in\mathbb{R}^{n}$, {$x_0 \ne x_1$,} $\beta \geq 0$, {$\sigma_{1}>0$}, $\theta\geq 0$, $\alpha>1$,  and set  $t:=1$.
\\
{\bf Step 1.}  Find the smallest integer $i\geq 0$ such that $\alpha^{i-1}\sigma_{t}\geq \sigma_{1}$.
\\
{\bf Step 1.1.}  Define
\[ h = \min\left\{\frac{\sqrt{6}}{(n^\beta\alpha^{(i-1)})^{1/2}}, \frac{3}{2n^\beta\alpha^{i-1}}\right\} \|x_t - x_{t-1}\|, \]
and, for every $j \in \J$, compute   $\nabla \bar F_j(x_t)$ and $\nabla^2 \bar F_j(x_t)$ 
as  in Lemmas~\ref{lem:2.378} and~\ref{lem:2.356457}, respectively.
\\
{\bf Step 1.2.} 
Compute an approximate KKT point $(x_{t,i}^{+}, \lambda_{t,i}^{+}) \in \R^n \times \R^m_+$ of subproblem \eqref{newtonproblem} with $x:=x_t$ and $\sigma:= \alpha^i\sigma_t$ 
such that
\small
\[
M_{x_{t},\alpha^{i}\sigma_{t}}(x^{+}_{t,i})\leq 0, \quad \sum_{j=1}^{m} (\lambda^+_{t,i})_j=1, \quad \left \|\sum_{j=1}^{m} (\lambda^+_{t,i})_j\nabla M_{x_t,\alpha^i\sigma_t}^j(x_{t,i}^+)\right\|\leq \theta  \|x_{t,i}^+-x_t\|^2,
\]
where 
\[ M_{x,\sigma}(y):=\max_{j \in \J} \,M^j_{x,\sigma}(y), \quad M^j_{x,\sigma}(y):=\langle \nabla \bar F_j(x),y-x\rangle+\dfrac{1}{2}\langle \nabla^{2}\bar F_j(x) (y-x),y-x\rangle+\dfrac{\sigma}{6}\|y-x\|^{3}.\]
\normalsize
\\
{\bf Step 1.3.} If 
\[ 
F_j(x_{t})-F_j(x^{+}_{t,i})\geq\dfrac{\alpha^{i}\sigma_{t}}{12}\|x^{+}_{t,i}-x_{t}\|^{3}-\dfrac{\sigma_{t}}{12}\|x_{t}-x_{t-1}\|^{3}, \quad \forall j \in \J,
\]
set $i_{t}=i$ and go to Step 2. Otherwise, set $i:=i+1$ and go to Step 1.1.
\\
{\bf Step 2.} Set $x_{t+1}=x^{+}_{t,i_{t}}$, $\lambda_{t+1}=\lambda^+_{t,i}$, $\sigma_{t+1}=\alpha^{i_{t}-1}\sigma_{t}$, $t:=t+1$, and go to Step 1.
\vspace{2mm}

{
\begin{remark}\label{rem:a90}
(i) It follows from  Lemmas~\ref{lem:2.378} and~\ref{lem:2.356457}  that  \( \nabla \bar F_j(x_t) \) and \( \nabla^2 \bar F_j(x_t) \) as chosen in Algorithm~2 satisfy the inequalities in \eqref{ineq:ApproxGrad-Hess} with $\bar \kappa_{G}:=Ln^{(1-2\beta)/2}$ and $\bar \kappa_{H}:=Ln^{1-\beta}$. Hence, for $\beta \in [0,1]$,  we obtain a derivative-free implementation of the M-CRM, with  iteration-complexity bounds  of \(\mathcal{O}(n^{3(1-\beta)/2}\epsilon^{-3/2})\)  to generate an \(\varepsilon\)-approximate Pareto critical point for problem~\eqref{vectorproblem}; see Theorem~\ref{thm:sumNablaF_j<-Delta0}.
Additionally, since each inner iteration of  Algorithm~2 involves $\mathcal{O}(mn^2)$
function evaluations, it follows from Lemma~\ref{lem:3.1} that the total number \(\delta_{T}\) of evaluations of \(F_j(\,\cdot\,)\)  required to generate an \(\varepsilon\)-approximate Pareto critical point is \(\mathcal{O}\left(mn^{(7-3\beta)/2}\varepsilon^{-\frac{3}{2}}\right)\).
Again (see Remark~\ref{rem2}), these bounds imply that for large values of \(\beta\), which make \(h\) close to zero, the number of iterations and function  evaluations decrease, as expected. (ii)  Since Algorithm~2  computes only approximation \( \nabla \bar F(\cdot) \) of the derivative $\nabla F(\cdot)$, an interesting stopping criterion is to consider  $\|\sum_{j\in \J} \lambda_t\nabla \bar F_j(x_t)\|\leq \varepsilon $, for a given tolerance $\varepsilon>0$. First note that this criterion will eventually be satisfied. Indeed, 
 using the triangle inequality, the first observation of item(i) above, the definition of $E^G_j(\cdot)$  in \eqref{definition-approxGrad-Hessian}, and the fact that $\sum_{j\in \J}(\lambda_t)_j=1$, we have
{\small\begin{align}
  \left\|\sum_{j\in \J} (\lambda_t)_j\nabla \bar F_j(x_t)\right\|\leq \left\|\sum_{j\in \J} (\lambda_t)_j\nabla  F_j(x_t)\right\| + \max_{j\in \J} \|E^G_j(x_t)\|  &\leq \left\|\sum_{j\in \J} (\lambda_t)_j\nabla  F_j(x_t)\right\| +  \alpha\bar{\kappa}_G\|x_t-x_{t-1}\|^2.
\end{align}}
The right-hand side of the above inequality converges to zero in view of  Corollary~\ref{cor:conv}, proving that the aforementioned criterion will eventually be satisfied. Additionally, if Algorithm~2 is stopped such that 
\[
\left \|\sum_{j\in \J} (\lambda_t)_j\nabla \bar F_j(x_t)\right\|\leq \varepsilon\quad  \mbox{and} \quad 
\|x_t-x_{t-1}\|^2 \leq \varepsilon,
\]
then  $x_t$ is an  \((1+\alpha\bar{\kappa}_G)\varepsilon\)-approximate Pareto critical point of~\eqref{vectorproblem} in the sense of Definition~\ref{def:aprSol}  due to the fact that
\begin{align*}
  \left\|\sum_{j\in \J} (\lambda_t)_j\nabla F_j(x_t)\right\| &\leq \left\|\sum_{j\in \J} (\lambda_t)_j\nabla \bar F_j(x_t)\right\| +  \left\|\sum_{j\in \J}(\lambda_t)_jE^G_j(x_t)\right\|\\
  &\leq \varepsilon + \alpha\bar{\kappa}_G\|x_t-x_{t-1}\|^2\leq (1+\alpha\bar{\kappa}_G)\varepsilon.
\end{align*}
\end{remark}}

\section{Numerical experiments}\label{sec:numerical}

{ In order to assess the numerical performance of the proposed algorithm, we considered 44 unconstrained multiobjective test problems, encompassing both convex and nonconvex cases with varying numbers of variables \(n\) and objectives \(m\). Table~\ref{tab:testp} provides an overview of these problems; for further details, see, for example, \cite{gonccalves2021globally}.} 
We remark that the bounds $\ell$ and $u$ were only used to generate starting points in a box defined by $\ell \leq x \leq u$:
\begin{equation}\label{eq:x0}
x_0 = (1 - \eta) \ell + \eta u,    
\end{equation}
where $\eta$ is drawn from a uniform distribution in (0,1).

{For numerical stability, we consider a scaled version of problem \eqref{vectorproblem} where 
each $F_j(x)$ is replaced by $\gamma_j F_j(x)$ with $\gamma_j = 1/\max (1, \| \nabla F_j(x_0) \|_{\infty}), j=1,\dots,m$. We remark that the set of Pareto critical points is the same for the original and the scaled problem.}

Our experiments were implemented in Fortran 90 and ran on an Apple M4, 24GB with macOS Sequoia. For solving  subproblem \eqref{newtonproblem}, we used Algencan \cite{andreani2008, algencan} which implements an augmented Lagrangian method with safeguards for general nonlinear programming. For the common parameters of Algorithms~1 and 2, we set $\sigma_1 = 2 \times 10^{-2}$, $\alpha = 2$, $x_0$ from \eqref{eq:x0} and $x_1 = x_0 + 10^{-4} \times \sqrt{n}$.  For Algorithm~2, we use   $\beta = 1/2$.

\begin{table}
\centering
\begin{tabular}{lrrcrr}
\hline 
Problem & n & m & Convex & $\ell$ & $u$ \\   \hline 
AP1 & 2 & 3 & Y & (-10, -10) & (10, 10) \\  
AP2 & 1 & 2 & Y & -100 & 100\\  
AP3 & 2 & 2 & N & (-100, -100) & (100, 100)\\  
AP4 & 3 & 3 & Y & (-10, -10, -10) & (10, 10, 10)\\  
BK1 & 2 & 2 & Y & (-5, -5) & (10, 10)\\  
DD1a & 5 & 2 & N & (-20, . . . , -20) & (20, . . . , 20)\\  
DGO1 & 1 & 2 & N & -10 & 13\\  
Far1 & 2 & 2 & N & (-1, -1) & (1, 1)\\  
FDS & 5 & 3 & Y & (-2, . . . , -2) & (2, . . . , 2)\\  
FF1 & 2 & 2 & N & (-1, -1) & (1, 1)\\  
Hil1 & 2 & 2 & N & (0, 0) & (1, 1)\\  
IKK1 & 2 & 3 & Y & (-50, -50) & (50, 50)\\  
JOS1 & 100 & 2 & Y & (-100, . . . , -100) & (100, . . . , 100)\\  
KW2 & 2 & 2 & N & (-3, -3) & (3, 3)\\  
LE1 & 2 & 2 & N & (-5, -5) & (10, 10)\\  
Lov1 & 2 & 2 & Y & (-10, -10) & (10, 10)\\  
Lov3 & 2 & 2 & N & (-20, -20) & (20, 20)\\  
Lov4 & 2 & 2 & N & (-20, -20) & (20, 20)\\  
Lov5 & 3 & 2 & N & (-2, -2, -2) & (2, 2, 2)\\  
MGH16b & 4 & 5 & N & (-25, -5, -5, -1) & (25, 5, 5, 1)\\  
MGH26b & 4 & 4 & N & (-1, -1, -1 - 1) & (1, 1, 1, 1)\\  
MGH33b & 10 & 10 & Y & (-1, . . . , -1) & (1, . . . , 1)\\  
MHHM2 & 2 & 3 & Y & (0, 0) & (1, 1)\\  
MLF2 & 2 & 2 & N & (-100, -100) & (100, 100)\\  
MMR1c & 2 & 2 & N & (0.1, 0) & (1, 1)\\  
MMR3 & 2 & 2 & N & (-1, -1) & (1, 1)\\  
MOP2 & 2 & 2 & N & (-1, -1) & (1, 1)\\  
MOP3 & 2 & 2 & N & (-$\pi$, -$\pi$) & ($\pi$, $\pi$)\\  
MOP5 & 2 & 3 & N & (-1, -1) & (1, 1)\\  
MOP7 & 2 & 3 & Y & (-400, -400) & (400, 400)\\  
PNR & 2 & 2 & Y & (-2, -2) & (2, 2)\\  
QV1 & 10 & 2 & N & (-5, . . . , -5) & (5, . . . , 5)\\  
SK1 & 1 & 2 & N & -100 & 100\\  
SK2 & 4 & 2 & N & (-10, -10, -10, -10) & (10, 10, 10, 10)\\  
SLCDT1 & 2 & 2 & N & (-1.5, -1.5) & (1.5, 1.5)\\  
SLCDT2 & 10 & 3 & Y & (-1, . . . , -1) & (1, . . . , 1)\\  
SP1 & 2 & 2 & Y & (-100, -100) & (100, 100)\\  
SSFYY2 & 1 & 2 & N & -100 & 100\\  
Toi4b & 4 & 2 & Y & (-2, -2, -2, -2) & (5, 5, 5, 5)\\  
Toi8b & 3 & 3 & Y & (-1, -1, -1, -1) & (1, 1, 1, 1)\\  
Toi9b & 4 & 4 & N & (-1, -1, -1, -1) & (1, 1, 1, 1)\\  
Toi10b & 4 & 3 & N & (-2, -2, -2, -2) & (2, 2, 2, 2)\\  
VU1 & 2 & 2 & N & (-3, -3) & (3, 3)\\  
ZLT1 & 10 & 5 & Y & (-1000, . . . , -1000) & (1000, . . . , 1000)\\  \hline 
\end{tabular}
\caption{Test problems}\label{tab:testp}
\end{table}

\subsection{Exact derivatives}\label{sec:numexact}

{In the first set of experiments we provide analytic expressions for the gradient and Hessian of the objectives, such that $\bar{\kappa}_G = \bar{\kappa}_H = 0$. We consider two versions of Algorithm~1: one that finds a KKT point in Step~1.2 ``exactly'', i.e, $\theta := 10^{-8} / \norm{x_{t,i_t}^{+} - x_t}^2$, and another where the subproblems are solved inexactly, with $\theta:=0.9$ in \eqref{eq:subprob-Alg1}. We changed the stopping criteria of the inner solver (Algencan) to meet those in \eqref{eq:subprob-Alg1}. These two versions shall be called M-CRM-E (exact) and M-CRM-I (inexact).}

{Based on Definition~\ref{def:aprSol}, we consider the stopping criterion
\begin{equation}\label{eq:stop1}
\left\| \sum_{j \in \J} \lambda_{t,i_t}^{+} \nabla F_j(x_{t,i_t}^{+}) \right\|^2 \leq  10 \times {\tt eps}^{1/2},    
\end{equation}
where ${\tt eps} = \sqrt{2^{-52}}$ is the machine precision. We also set a maximum number of iterations to 1,000.}

M-CRM-E and M-CRM-I were compared with a Multiobjective Newton Method with safeguards (M-NS) from \cite{gonccalves2021globally} on the 44 problems listed in Table~\ref{tab:testp} for 100 different starting points (resulting in 4,400 instances). {The default parameters of the M-NS defined in \cite{gonccalves2021globally} were used. We remark that M-NS implementation\footnote{https://github.com/lfprudente/newtonMOP} also employs Algencan for solving the subproblems.} 

Performance profiles of outer iterations, function evaluations, and running time are reported in Figure~\ref{fig:ppex}. 
We observe that both versions of M-CRM are more efficient in terms of outer iterations than M-NS. When it comes to robustness, given the budget of 1,000 iterations, M-NS solved 97.25\% of the instances whereas M-CRM-E and M-CRM-I solved 99.65\% and 99.63\%, respectively. 
{When it comes to function evaluations, the profile follows almost the same trend as that of outer iterations, which indicates that M-CRM-I and M-CRM-E are not demanding an excessive number of backtrackings in Step~1.3. In terms of running time, M-CRM-I is more efficient than both M-NS and M-CRM-E for this set of problems/instances. It is interesting to note that, in general, even though M-CRM-E requires less iterations than M-NS, the iterations of the former are more expensive, which has a negative impact in the total running time. M-CRM-I mitigates this higher cost per iteration by allowing the subproblems to be solved inexactly.}

\begin{figure}
    \centering
    \includegraphics[trim={20 0 30 0},clip,width=0.31\linewidth]{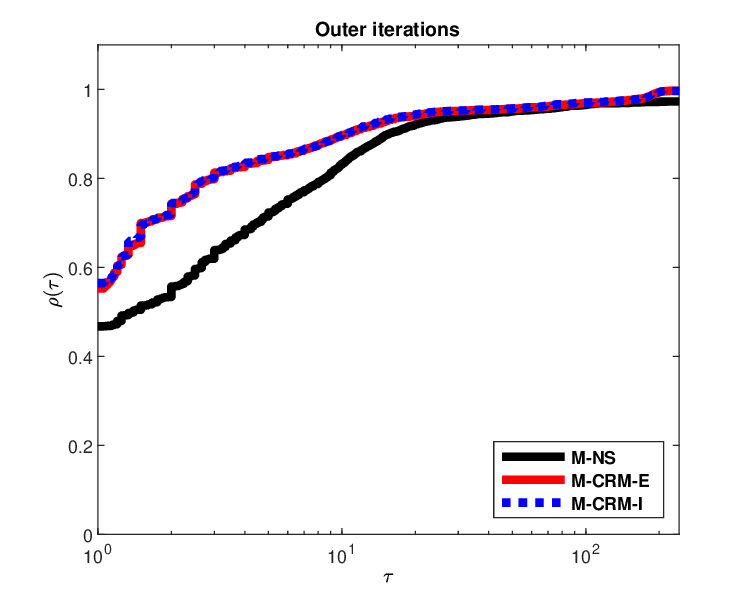}
    \includegraphics[trim={20 0 30 0},clip,width=0.31\linewidth]{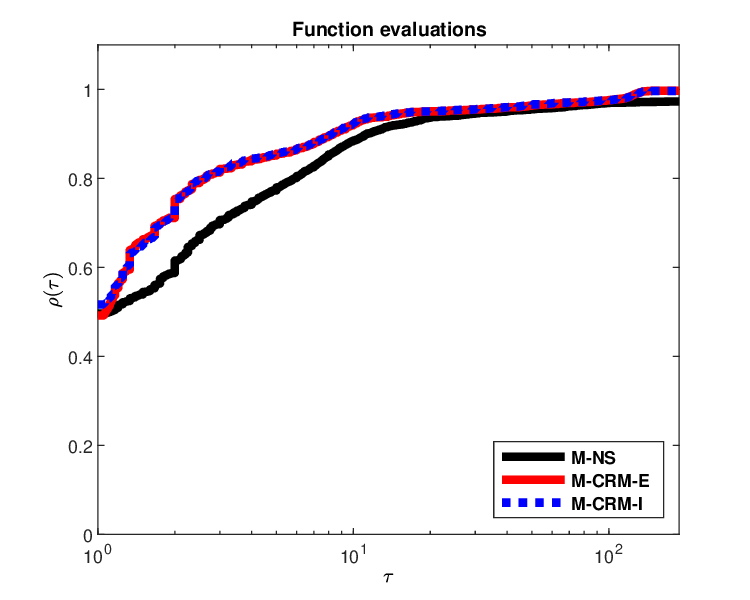}
    \includegraphics[trim={20 0 30 0},clip,width=0.31\linewidth]{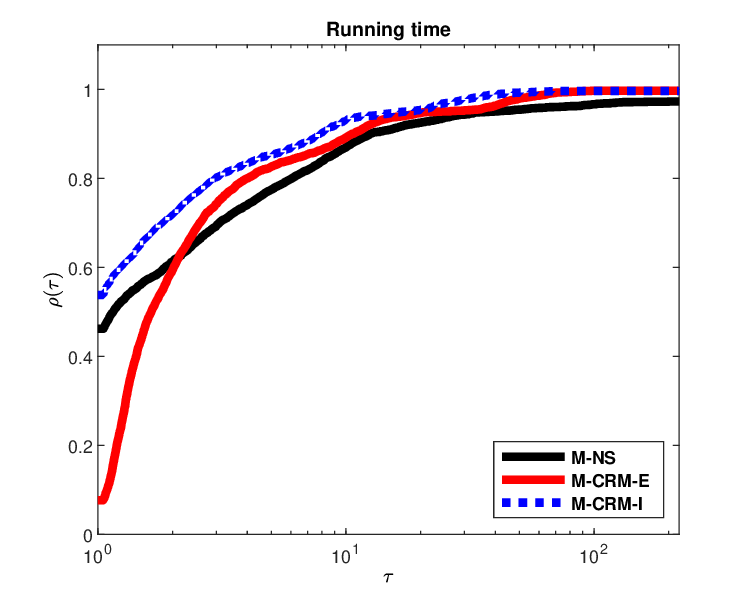}
    \caption{From left to right: outer iterations, function evaluations and running time.}
    \label{fig:ppex}
\end{figure}

\subsection{Finite difference approximations}\label{sec:numinexact} 
Now, we consider a derivative-free version of M-CRM-I given by Algorithm~2, where the gradients and Hessians are approximated by finite differences. We will refer to this version as M-CRM-I-DF. 

{Since M-CRM-I-DF is designed for problems where the derivatives are not readily available, we used as stopping criterion
\begin{equation}\label{eq:stop2}
\left\| \sum_{j \in \J} \lambda_{t,i_t}^{+} \nabla \bar{F}_j(x_{t,i_t}^{+}) \right\| \leq \varepsilon \quad \text{and} \quad      \norm{x_{t,i_t}^{+} - x_t}^2 \leq \varepsilon 
\end{equation}
where $\nabla \bar{F}_j(x)$ denotes the finite difference approximation of $\nabla F_j(x)$ given in Lemma~\ref{lem:2.378} and $\varepsilon^2 = 10 \times {\tt eps}^{1/2}$; see discussion in Remark~\ref{rem:a90}(ii).}

\begin{figure}
\centering
    \includegraphics[trim={20 0 30 0},clip,width=0.31\linewidth]{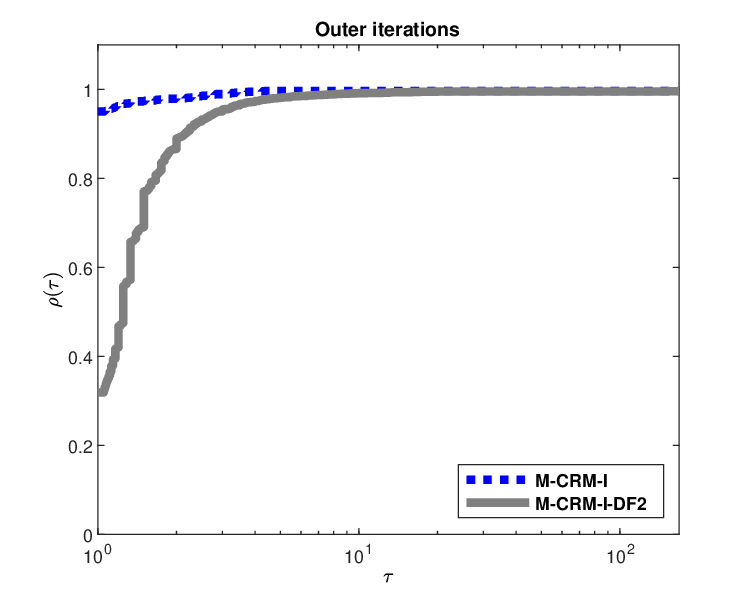}
    \includegraphics[trim={20 0 30 0},clip,width=0.31\linewidth]{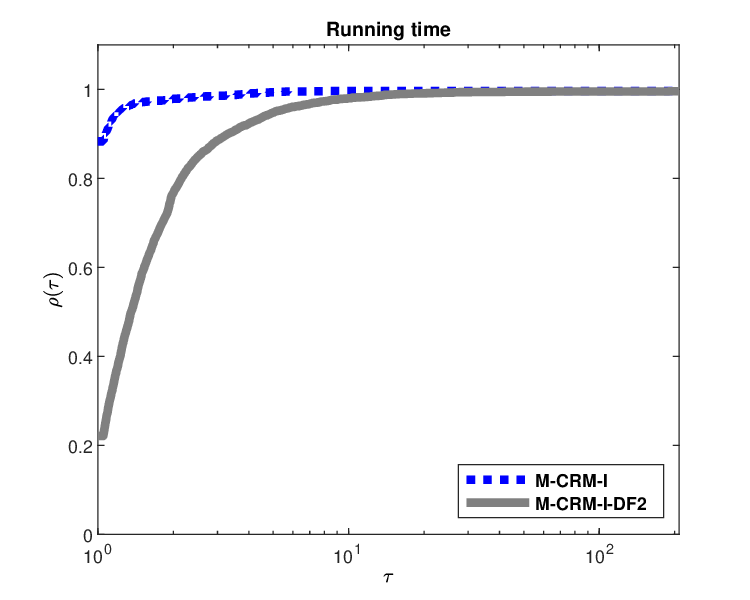}
     \caption{{Outer iterations (left) and running time (right). Performance profiles comparing M-CRM-I and M-CRM-I-DF.}}
    \label{fig:ppex-df}
\end{figure}


Figure~\ref{fig:ppex-df} presents the performance profiles considering outer iterations and running time for M-CRM-I and M-CRM-I-DF. As expected, the use of finite-difference approximations for the derivatives leads to an increase in the number of outer iterations and running time, which is observed in these performance profiles. {However, what is more important to observe is that the robustness was not compromised and M-CRM-I-DF solved 99.54\% of the instances within the iteration limit, very close to the 99.63\% of M-CRM-I.}

Of course, the use of derivative approximations may lead to different iterates and consequently different approximate Pareto points. Hence, it is interesting to investigate whether these two algorithms are recovering the Pareto front in a similar way. Figure~\ref{fig:paretofront} presents the recovered Pareto front for problems BK1, DGO1, Hil1 and Toi4. {Pictures on the left correspond to M-CRM-I which uses exact derivatives whereas those on the right correspond to M-CRM-I-DF. Visually the recovered fronts are quite similar.} 

\begin{figure}
    \centering
    \includegraphics[width=0.45\linewidth, trim={0 40 0 40},clip]{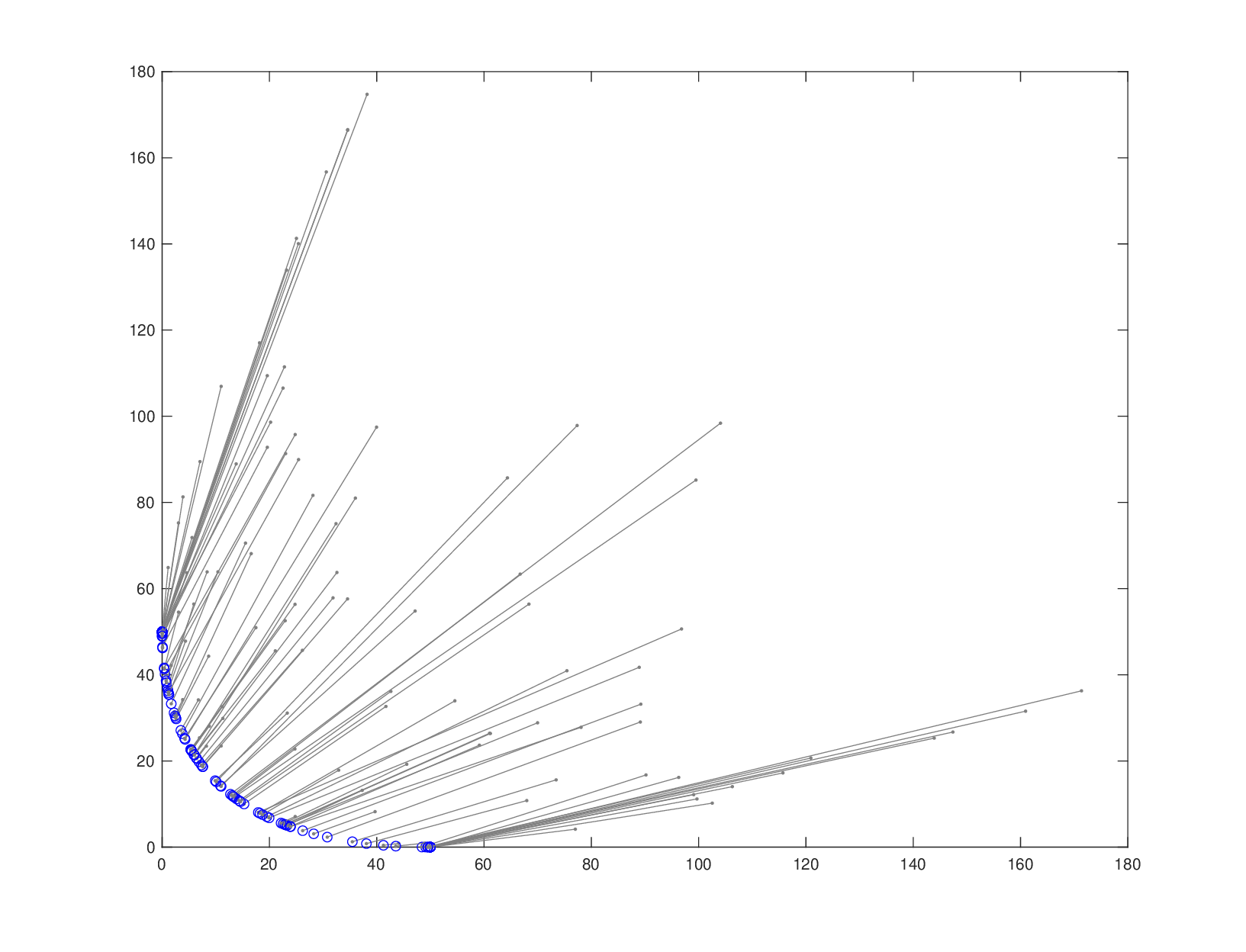}
    \includegraphics[width=0.45\linewidth, trim={0 40 0 40},clip]{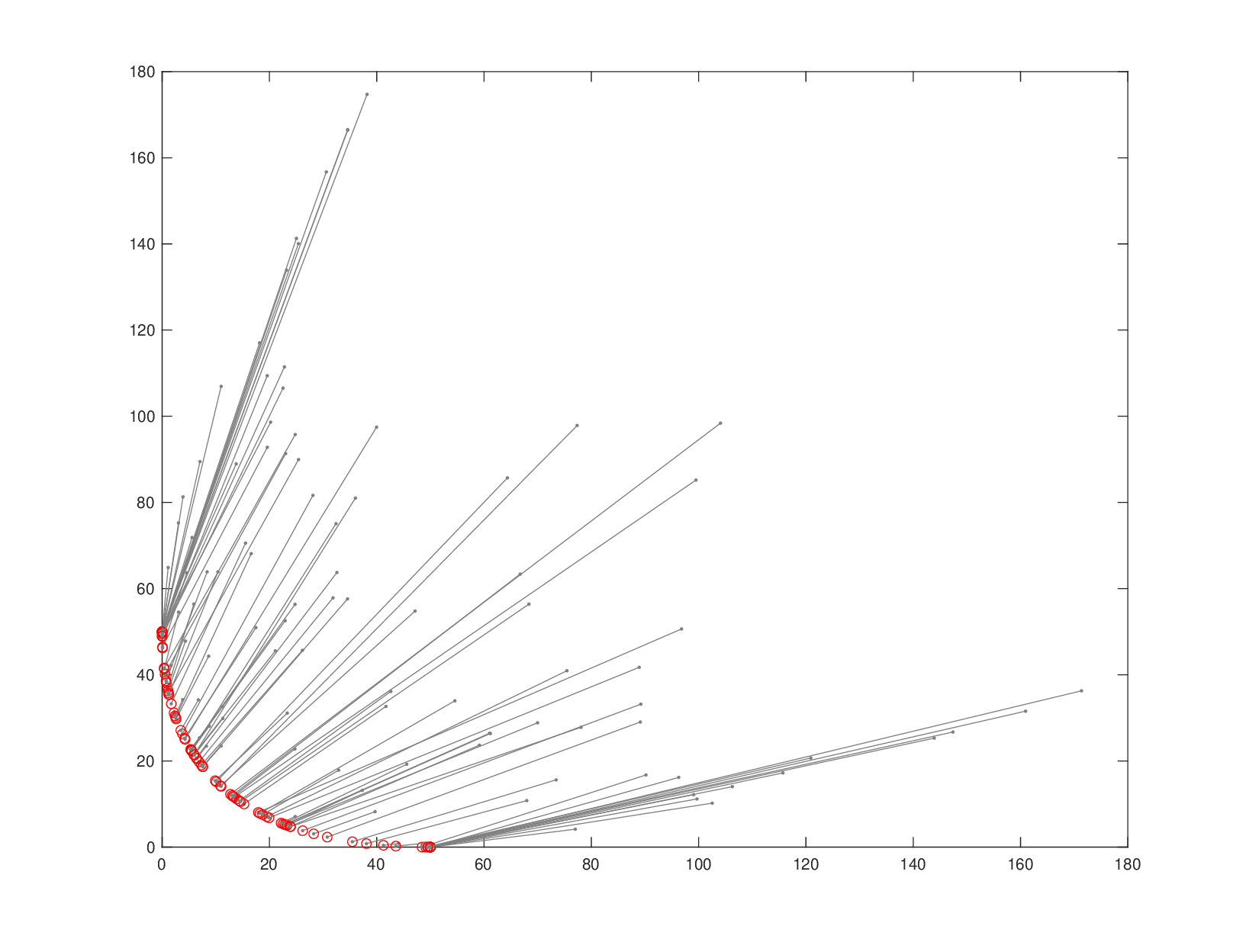}
    \includegraphics[width=0.45\linewidth, trim={0 20 0 20},clip]{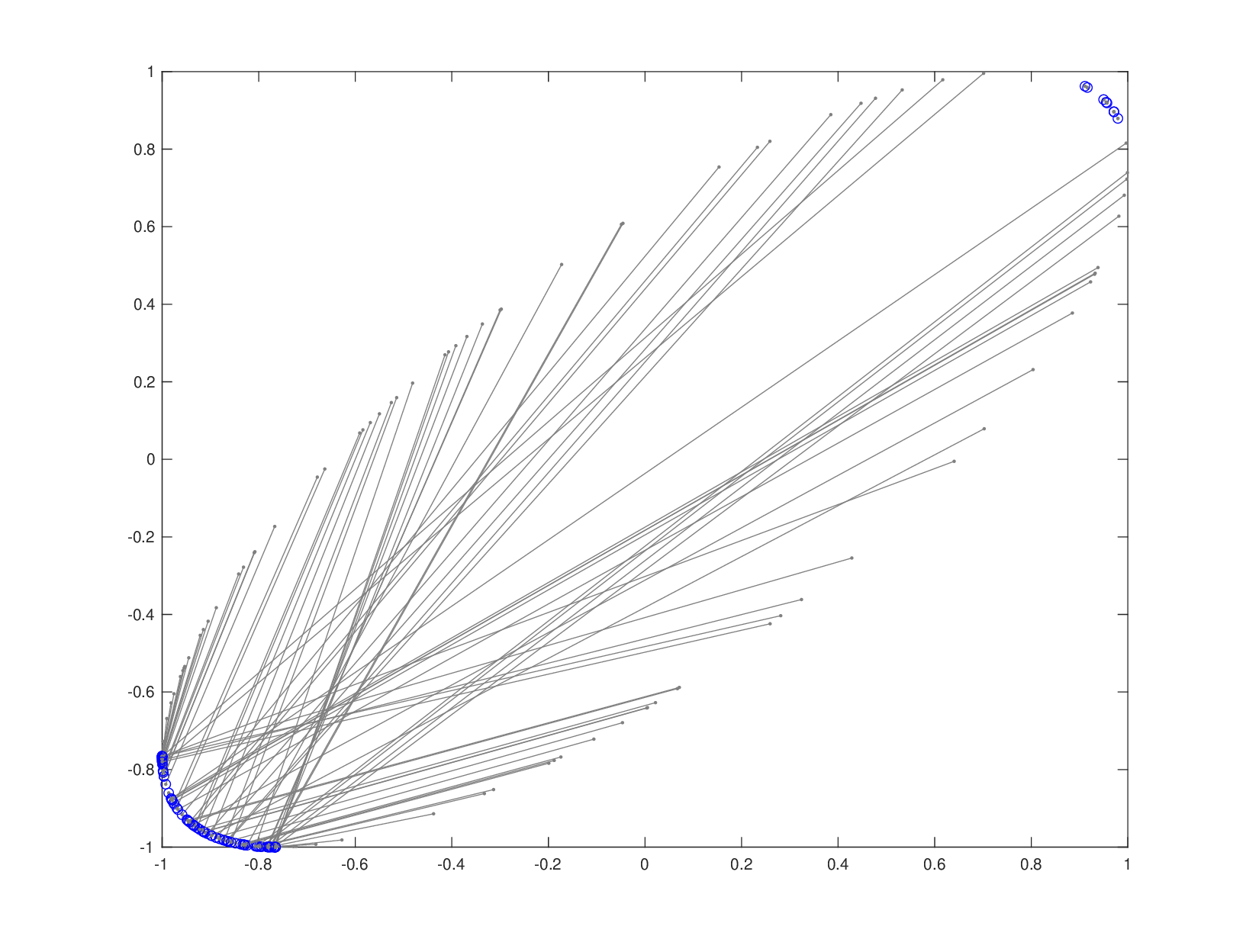}
    \includegraphics[width=0.45\linewidth, trim={0 20 0 20},clip]{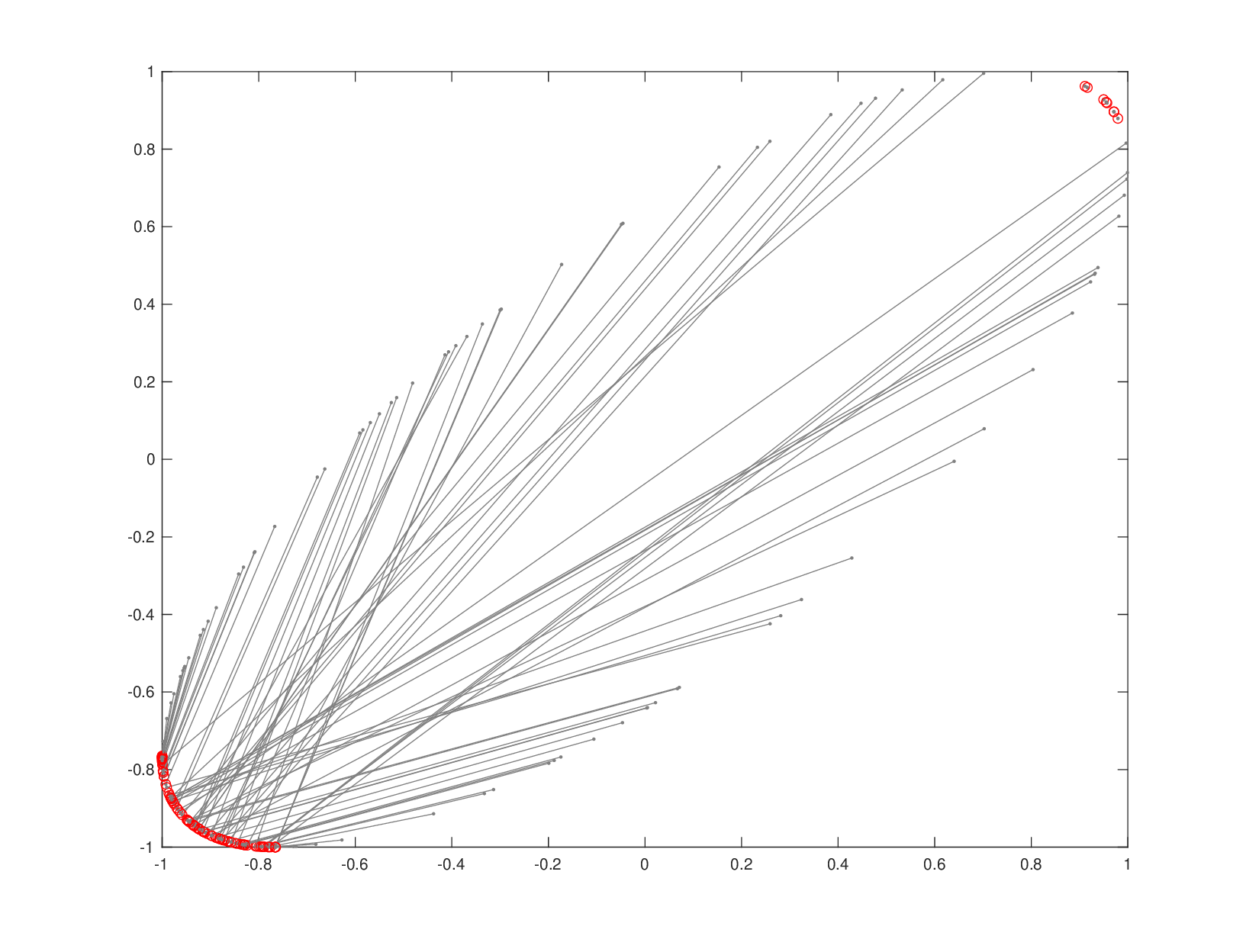}
    \includegraphics[width=0.45\linewidth, trim={0 20 0 20},clip]{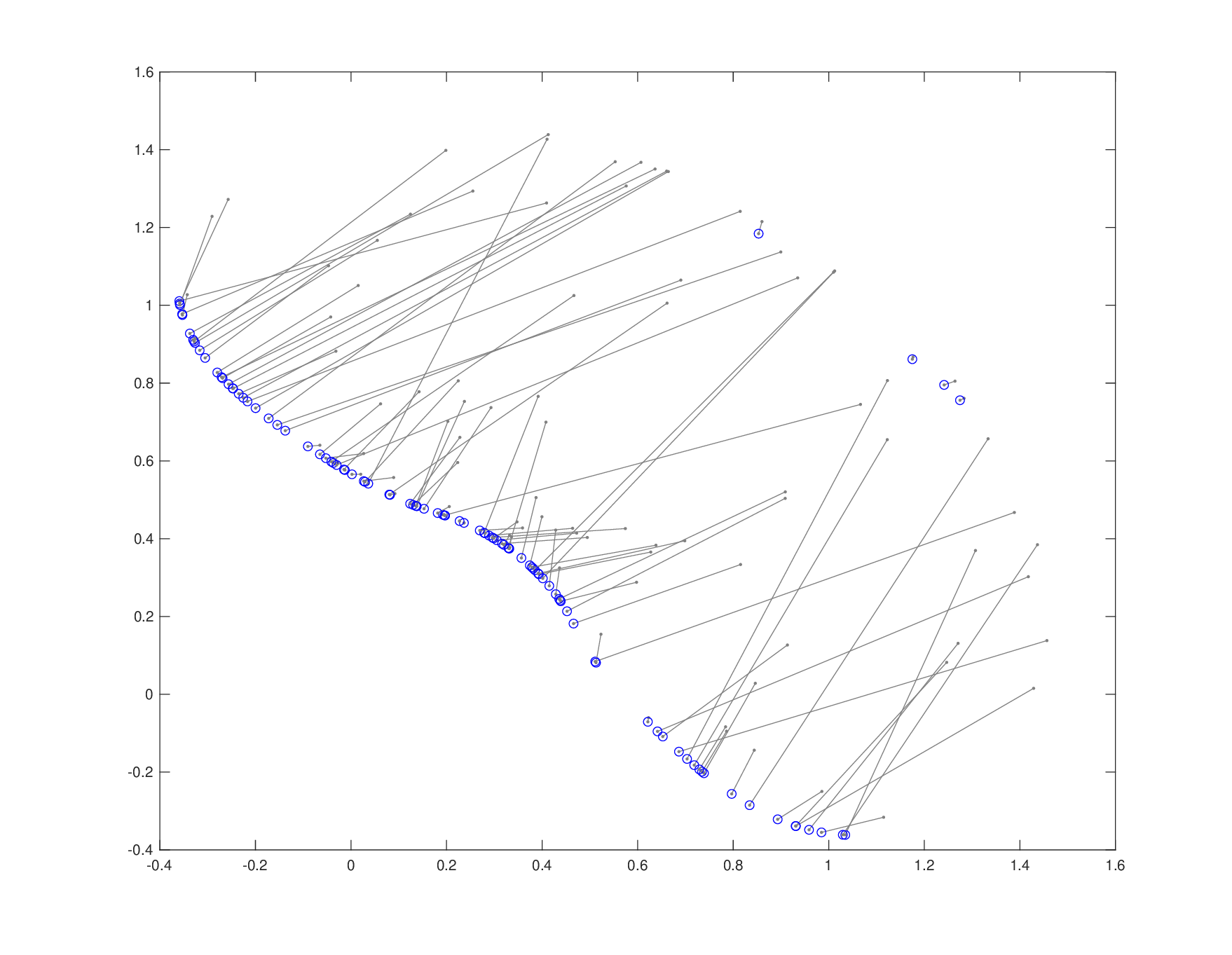}
    \includegraphics[width=0.45\linewidth, trim={0 20 0 20},clip]{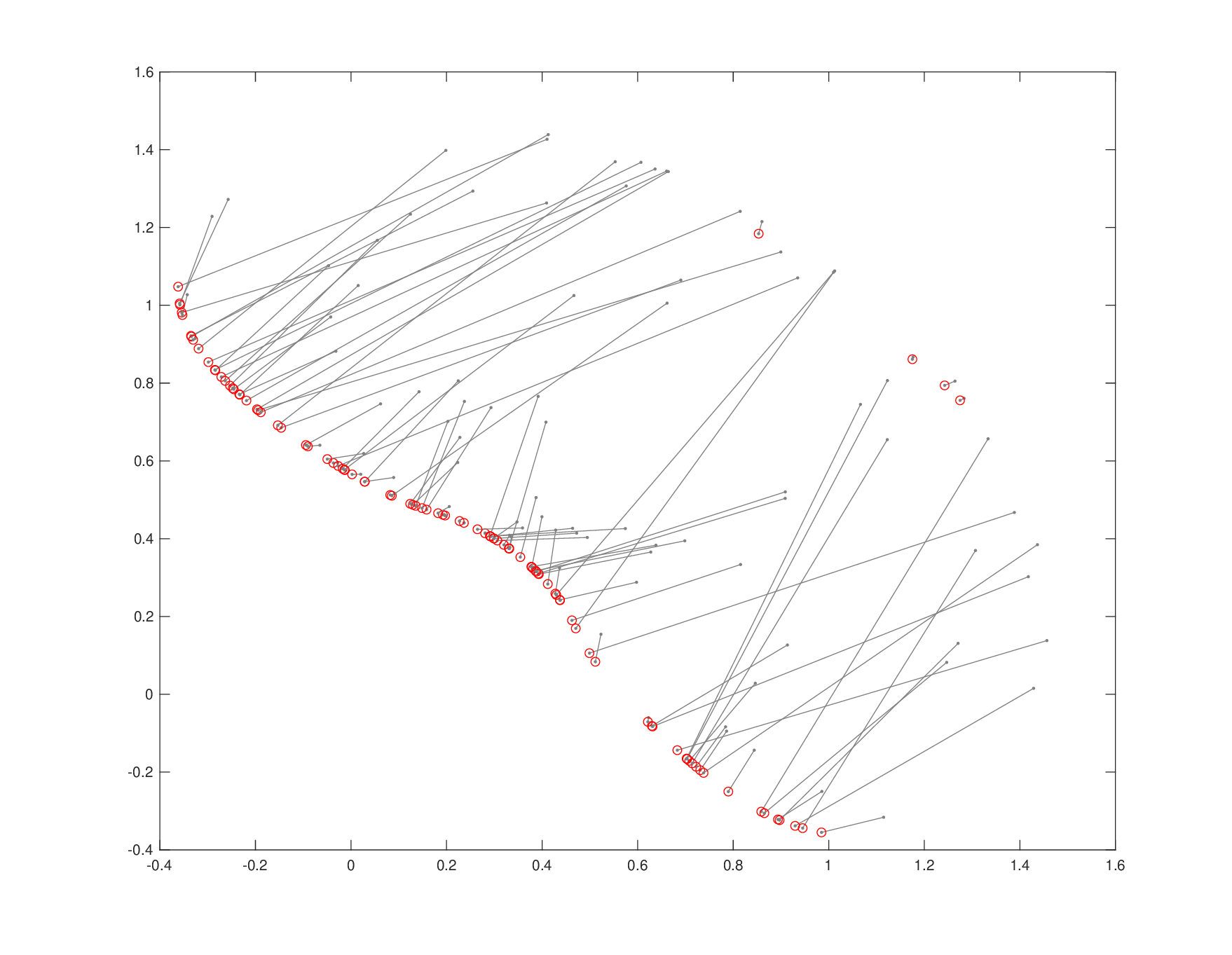}
    \includegraphics[width=0.45\linewidth, trim={0 20 0 20},clip]{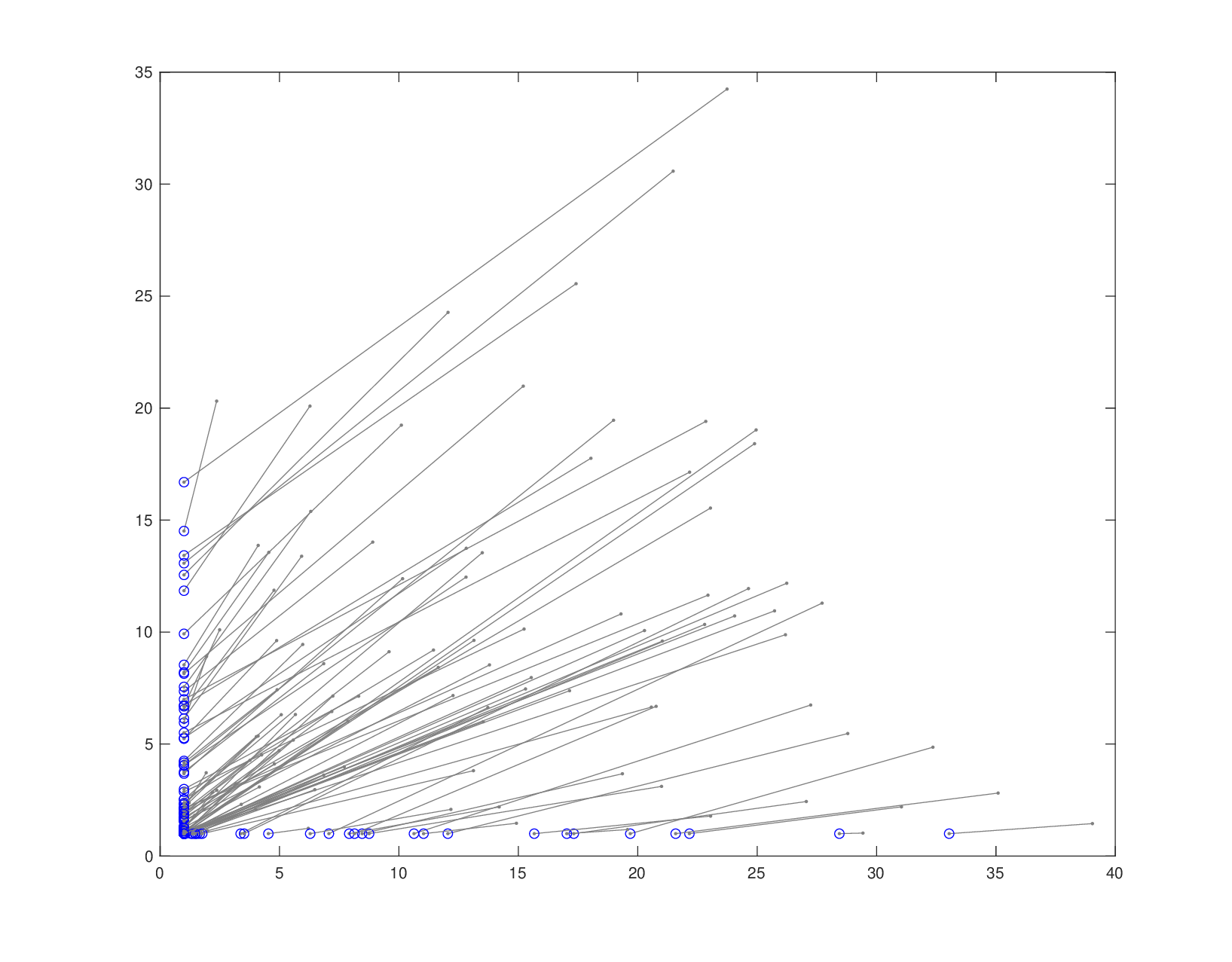}
    \includegraphics[width=0.45\linewidth, trim={0 20 0 20},clip]{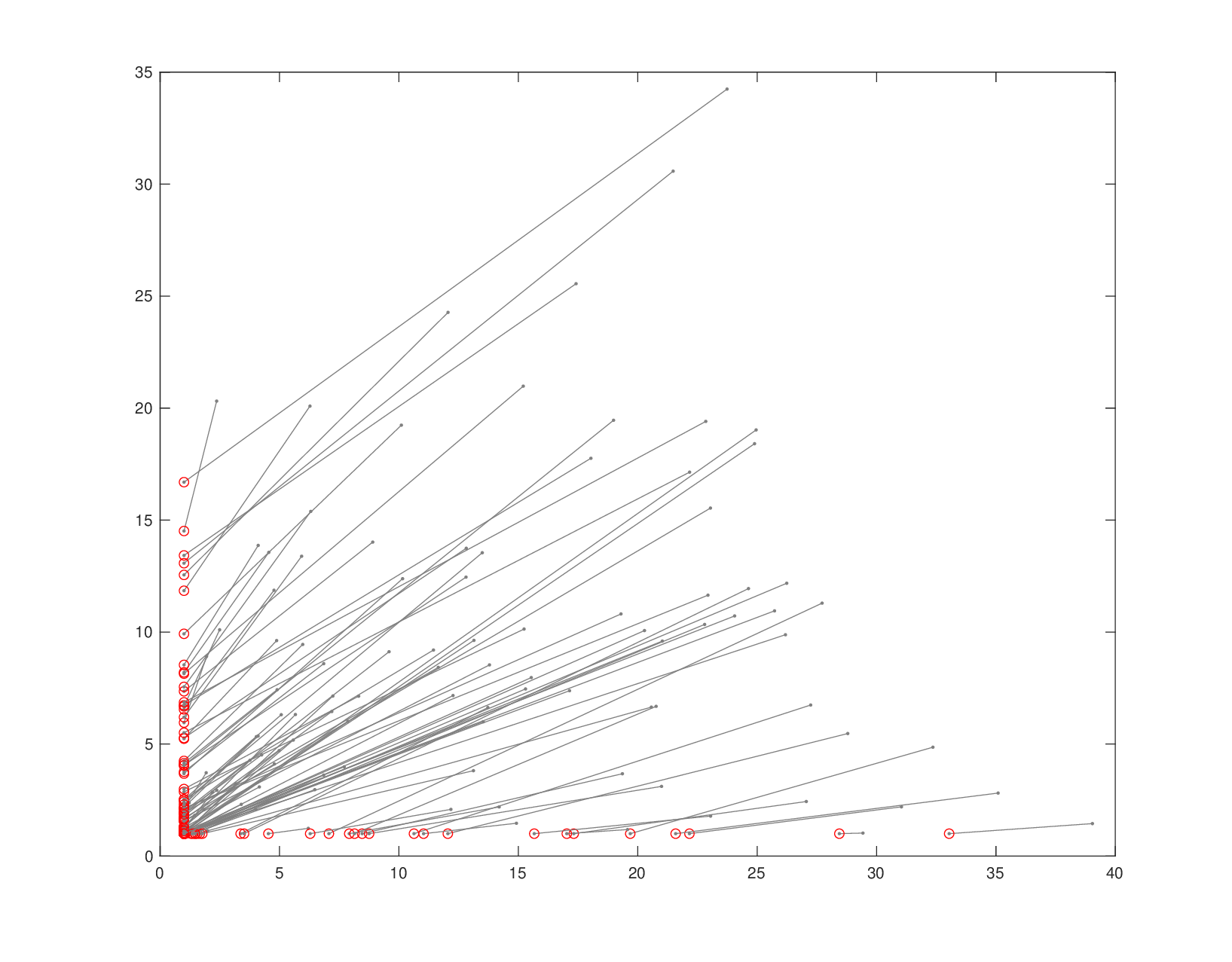}
    \caption{Recovered Pareto front from 100 random starting points. From top to bottom: BK1, DGO1, Hil1, Toi4. Left: M-CRM-I, Right: M-CRM-I-DF.}
    \label{fig:paretofront}
\end{figure}

\subsection{An application to logistic regression}
This last experiment is inspired by regularized logistic regression problems of the form
\begin{equation}\label{eq:logistic}
\min_{x \in \Rn} f_{\mu}(x) := - \sum_{i=1}^m \left[ b^{(i)} \log (\sigma_x(a^{(i)})) + (1 - b^{(i)}) \log (1 - \sigma_x(a^{(i)})) \right] + \frac{\mu}{2} \|x \|_2^2,
\end{equation}
where $\{(a^{(i)},b^{(i)})\}_{i=1}^m \subset \Rn \times \{0,1\}$ is the dataset, $\sigma_x(a):= 1/(1+e^{-\langle a, x \rangle})$ is the logistic model and $\mu > 0$ is a regularization parameter.

Problem \eqref{eq:logistic} can be interpreted as a scalarization for a multiobjective optimization problem (MOP) {with two objectives}
\begin{equation}
    F_1(x) = - \sum_{i=1}^m \left[ b^{(i)} \log (\sigma_x(a^{(i)})) + (1 - b^{(i)}) \log (1 - \sigma_x(a^{(i)})) \right], \quad F_2(x) = \frac{1}{2} \|x \|_2^2.
\end{equation}
Performing analysis of points on the Pareto front of such a MOP allows one to find a good trade-off between goodness of fit (represented by $F_1(x)$) and simplicity of the model (represented by $F_2(x)$). 

{As dataset, we consider the Wisconsin Breast Cancer Dataset\footnote{Dataset freely available in the UCI Machine Learning Repository
(\url{https://archive.ics.uci.edu/ml}).} \cite{street1993}}. We used only the first 10 features and scaled the data so that each feature is in the interval [0,1]. {For scaling the two objectives we considered $\gamma_1 = 0.1$ and $\gamma_2 = 1.0$}. The dataset has 568 records which were split: the first $m=468$ for training and the last 100 for test. 

{We decided to test M-CRM-I-DF for training so we do not need to provide gradients and Hessians for the objectives.} Then, we ran M-CRM-I-DF from 300 random starting points in $\mathbb{R}^{10}$ according to \eqref{eq:x0} with $\ell = (-10,\dots,-10)^\top$ and $u = (10,\dots,10)^\top$.

Figure~\ref{fig:btc} illustrates the recovered Pareto front (and its zoomed version). ``A'' represents the point with smallest training error (smallest $F_1(x))$, but it is the point with the largest Euclidean norm ($F_2(x)$ value). Points up northwest in the Pareto front are more prone to overfitting. Its accuracy in the test set was 93\%.  In the opposite side, point ``C'' is the one with smallest Euclidean norm ($F_2(x) \approx 10^{-14}$), resulting in a model too simple to adjust well the training data. Its test accuracy was only 23\%. In the ``knee'' region of the Pareto front we depict point ``B'' which corresponds to one of the best in terms of test accuracy: 95\%. 
{Even with a value of $F_1(x)$ (training error) slightly higher than that of point ``A'',  the simpler model provided by ``B'' generalized better than the one corresponding to ``A''.}

\begin{figure}
    \centering
    \includegraphics[width=0.45\linewidth]{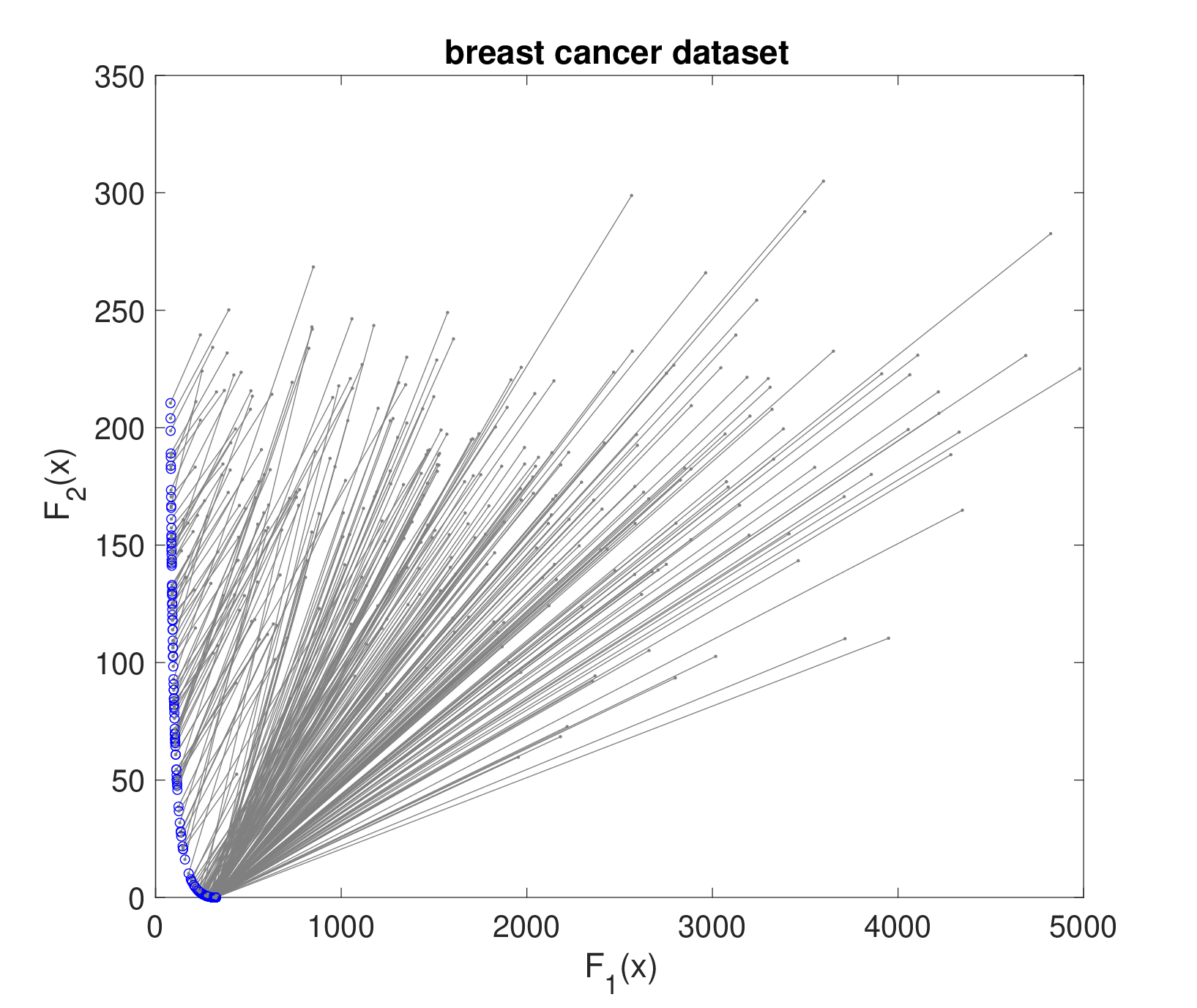}\includegraphics[width=0.45\linewidth]{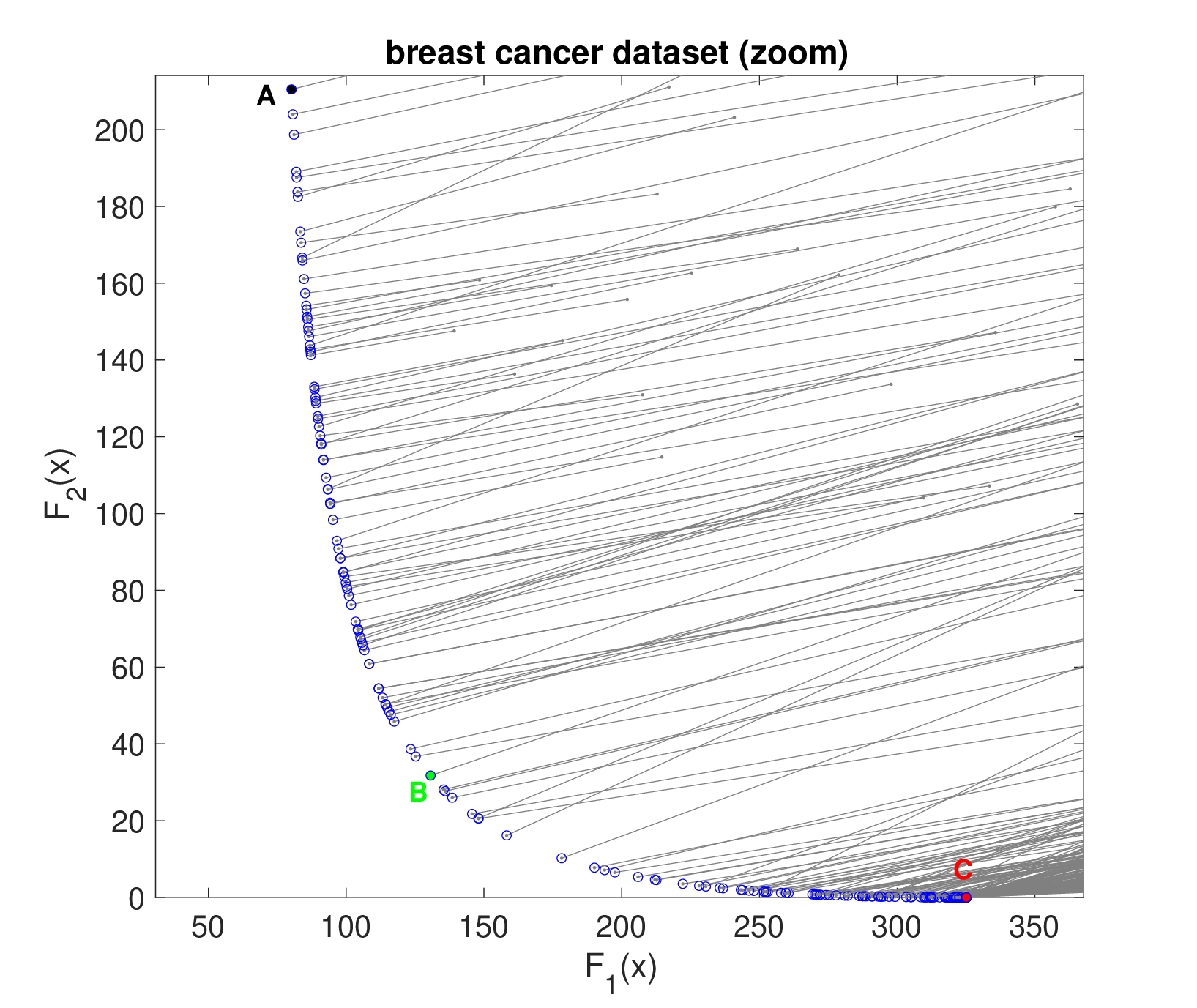}
    \caption{Recovered Pareto front (left) and zoomed front (right). On the right, points A and C indicate the smallest $F_1(x)$ (logistic objective) and $F_2(x)$ ($\|x\|_2$), respectively. Point B corresponds to the best in terms of accuracy in the test set.}
    \label{fig:btc}
\end{figure}




\section{Conclusion}\label{sec:final}

This work proposed a novel cubic regularization method tailored for nonconvex unconstrained multiobjective optimization problems. The method relies on regularized models for each objective component, incorporating inexact first- and second-order derivative information within controllable error bounds. A distinctive aspect of the algorithm is the joint adjustment of the regularization parameter and derivative approximation accuracy through a nonmonotone line search strategy, enhancing its practical robustness. Theoretical analysis established iteration-complexity bounds of \(\mathcal{O}(\epsilon^{-3/2})\) under mild assumptions, with specific results for finite-difference implementations showing favorable complexity in terms of function evaluations. Furthermore, the method exhibits global convergence, and under local convexity, achieves superlinear or quadratic local convergence, depending on the accuracy of the derivative information.

Numerical experiments on standard test sets from the literature showcase the effectiveness of the proposed methods and their competitive performance with respect to a safeguarded multiobjective Newton method. We also remark the robustness of the derivative-free version of M-CRM in these test problems and its applicability in real problems as illustrated by the example in regularized logistic regression.

\appendix

\section{Proof of Lemma~\ref{lem:2.378}}
It follows from the triangle inequality for norms and (\ref{eq:2.2}) that, for every $j\in \J$,
\begin{align*}
 & | F_j(x+he_{i})- F_j(x-he_{i})-2h\inner{\nabla F_j(x)}{e_{i}}|\\
  &\leq| F_j(x+he_{i})- F_j(x)-h\inner{\nabla F_j(x)}{e_{i}}-h^2\inner{\nabla^{2}F_j(x)e_{i}}{e_i}|\\
 &+\left|-F_j(x-he_{i})+F_j(x)-h\inner{\nabla F_j(x)}{e_{i}}+h^2\inner{\nabla^{2}F_j(x)e_{i}}{e_i}\right|\\
 &\leq\dfrac{L_j}{6}h^{3}+\dfrac{L_j}{6}h^{3}=\dfrac{L_j}{3}h^{3},
\end{align*}
which implies that 
\begin{equation*}
 \left|\dfrac{ F_j(x+he_{i})- F_j(x-he_{i})}{2h}-\inner{\nabla F_j(x)}{e_{i}}\right|\leq\dfrac{L_j}{6}h^2.
\end{equation*}
Hence, if  $\nabla \bar F_j$ is as in \eqref{eq:2.147814}, we obtain 
\begin{equation}\label{eq:8901909}
 | ({\nabla \bar F_j(x)-\nabla F_j(x)})_i|\leq\dfrac{L_j}{6}h^2, \quad \forall i=1, \dots, n.
\end{equation}
which yields 
\[
 \|E^G_{j}(x)\|=\| {\nabla \bar F_j(x)-\nabla F_j(x)}\| \leq\dfrac{\sqrt{n}L_j}{6}h^2,
\]
concluding the proof of \eqref{eq:2.17mais1}.

The second statement of the lemma trivially follows from the first one and the definition of $h$ and~$L$.

\section{Proof of Lemma~\ref{lem:2.37890}}
It follows from \eqref{eq:o90} with $y=x+he_{i}$ that, for every $j\in \J$,
\begin{equation*}
| F_j(x+he_{i})- F_j(x)-h\inner{\nabla F_j(x)}{e_{i}}|\leq\dfrac{\bar L_j}{2}h^{2}
\end{equation*}
or, equivalently,
\begin{equation*}
  \left|\left(\dfrac{ F_j(x+he_{i})- F_j(x)}{h}\right)-\inner{\nabla F_j(x)}{e_{i}}\right|\leq\dfrac{\bar L_j}{2}h.
\end{equation*}
Hence, if  $\nabla \bar F_j$ is as in \eqref{eq:2.14781o0}, we obtain 
\begin{equation}\label{eq:89011}
\|({\nabla \bar F_j(x)-\nabla F_j(x)})_i\|\leq\dfrac{\bar L_j}{2}h,  \quad \forall i=1, \dots, n.
\end{equation}
Using similar arguments, it can be proven that this inequality also holds for \(\nabla \bar{F}_j\) as defined in \eqref{eq:2.14781o01}.
On the other hand,  it follows from the triangle inequality for norms and \eqref{eq:o90} that, for every $j\in \J$,
\begin{align*}
 & | F_j(x+he_{i})- F_j(x-he_{i})-2h\inner{\nabla F_j(x)}{e_{i}}|\\
 & \leq| F_j(x+he_{i})- F_j(x)-h\inner{\nabla F_j(x)}{e_{i}}|
+\left|-F_j(x-he_{i})+ F_j(x)-h\inner{\nabla F_j(x)}{e_{i}}\right|\\
 &\leq\dfrac{\bar L_j}{2}h^{2}+\dfrac{\bar L_j}{2}h^{2}={\bar L_j}h^{2},
\end{align*}
which implies that 
\begin{equation*}
 \left|\left(\dfrac{ F_j(x+he_{i})- F_j(x-he_{i})}{2h}\right)-\inner{\nabla F_j(x)}{e_{i}}\right|\leq\dfrac{\bar L_j}{2}h.
\end{equation*}
Hence, if  $\nabla \bar F_j$ is as in \eqref{eq:2.147814ji}, we obtain 
\begin{equation}\label{eq:89019091}
 | ({\nabla \bar F_j(x)-\nabla F_j(x)})_i|\leq\dfrac{\bar L_j}{2}h, \quad \forall i=1, \dots, n. 
\end{equation}
Therefore, for the three choices of $\nabla \bar F_j$, we obtain from \eqref{eq:89011} and \eqref{eq:89019091} that 
\[
\|E^G_{j}(x)\|=\| {\nabla \bar F_j(x)-\nabla F_j(x)}\| \leq\dfrac{\sqrt{n}\bar L_j}{2}h,
\]
concluding the proof of \eqref{eq:2.17mais2345}.

The last statement of the lemma trivially follows from the first one and the definitions of $h$ and~$\bar L$.

\section{Proof of Lemma~\ref{lem:2.3}}
It follows from (\ref{eq:2.3}) with $y=x+he_{i}$ that, for every $j\in \J$,
\begin{equation*}
\|\nabla F_j(x+he_{i})-\nabla F_j(x)-h\nabla^{2}F_j(x)e_{i}\|\leq\dfrac{L_j}{2}h^{2}
\end{equation*}
\begin{equation*}
\Longrightarrow \left\|\left(\dfrac{\nabla F_j(x+he_{i})-\nabla F_j(x)}{h}\right)-\nabla^{2}F_j(x)e_{i}\right\|\leq\dfrac{L_j}{2}h.
\end{equation*}
Hence, if  $A_j$ is as in \eqref{eq:2.14}, we obtain 
\begin{equation}\label{eq:890}
 \|(A_j-\nabla^{2}F_j(x))e_{i}\|\leq\dfrac{L_j}{2}h.
\end{equation}
Using similar arguments, it can be proven that this inequality also holds for $A_j$ as defined in \eqref{eq:2.141}.
On the other hand, 
it follows from the triangle inequality for norms and (\ref{eq:2.3}) that, for every $j\in \J$,
\begin{align*}
 & \|\nabla F_j(x+he_{i})-\nabla F_j(x-he_{i})-2h\nabla^{2}F_j(x)e_{i}\|\\
 & \leq\|\nabla F_j(x+he_{i})-\nabla F_j(x)-h\nabla^{2}F_j(x)e_{i}\|+\|-(\nabla F_j(x-he_{i})-\nabla F_j(x)+h\nabla^{2}F_j(x)e_{i})\|\\
 &\leq\dfrac{L_j}{2}h^{2}+\dfrac{L_j}{2}h^{2}={L_j}h^{2},
\end{align*}
which implies that 
\begin{equation*}
 \left\|\left(\dfrac{\nabla F_j(x+he_{i})-\nabla F_j(x-he_{i})}{2h}\right)-\nabla^{2}F_j(x)e_{i}\right\|\leq\dfrac{L_j}{2}h.
\end{equation*}
Hence, if  $A_j$ is as in \eqref{eq:2.142}, we obtain 
\begin{equation}\label{eq:8901}
 \|(A_j-\nabla^{2}F_j(x))e_{i}\|\leq\dfrac{L_j}{2}h.
\end{equation}
Therefore, for the three choices of $A_j$, we obtain from \eqref{eq:890} and \eqref{eq:8901}
\begin{equation*}
\|A_j-\nabla^{2}F_j(x)\|^{2}\leq\|A_j-\nabla^{2}F_j(x)\|_{F}^{2}=\sum_{i=1}^{n}\|(A_j-\nabla^{2}F_j(x))e_{i}\|_{2}^{2}\leq n\left(\dfrac{L_j}{2}\right)^{2}h^{2},
\end{equation*}
which gives
\begin{equation}
\|A_j-\nabla^{2}F_j(x)\|\leq\dfrac{\sqrt{n}L_j}{2}h.
\label{eq:2.17}
\end{equation}
Finally, combining (\ref{eq:2.16}) and (\ref{eq:2.17}), we get
\begin{equation*}
\|E^H_{j}(x)\|=\|\nabla^2 \bar F_j(x)-\nabla^{2}F_j(x)\|\leq\|A_j-\nabla^{2}F_j(x)\|\leq\dfrac{\sqrt{n}L_j}{2}h,
\end{equation*}
which proves the inequality in \eqref{eq:2.17mais}.

The last  statement of the lemma trivially follows from the first one and the definitions of $h$ and $L$.

\section{Proof of Lemma~\ref{lem:2.356457}}
 Note first that, for every $j\in \J$,  from \eqref{eq:2.14899008745}, the difference $ 4h^2 (A_j)_{il} - 4{h^2} (\nabla^{2}F_j(x))_{il}$ is given by 
\begin{align*}
 &  F_j(x+h(e_{i}+e_l))-F_j(x+h(e_{i}-e_l))-F_j(x+h(e_l-e_{i}))+F_j(x-h(e_i+e_{l}))-4{h^2} (\nabla^{2}F_j(x))_{il}\\
 & =F_j(x+h(e_{i}+e_l)) - F_j(x)-h\inner{\nabla F_j(x)}{e_{i}+e_l}-\frac{h^2}{2}\inner{\nabla^{2}F_j(x)(e_{i}+e_l)}{e_i+e_l}\\
& -\left(F_j(x+h(e_{i}-e_l)) - F_j(x)-h\inner{\nabla F_j(x)}{e_{i}-e_l}-\frac{h^2}{2}\inner{\nabla^{2}F_j(x)(e_{i}-e_l)}{e_i-e_l}\right)\\
& -\left(F_j(x+h(e_l-e_{i})) - F_j(x)-h\inner{\nabla F_j(x)}{e_l-e_{i}}-\frac{h^2}{2}\inner{\nabla^{2}F_j(x)(e_l-e_{i})}{e_l-e_i}\right)\\
 &+F_j(x-h(e_{i}+e_l)) - F_j(x)+h\inner{\nabla F_j(x)}{e_{i}+e_l}-\frac{h^2}{2}\inner{\nabla^{2}F_j(x)(e_{i}+e_l)}{e_i+e_l}.
\end{align*}
Therefore, using the triangle inequality for norms and (\ref{eq:2.2}), we have
\begin{align*}
|4h^2 (A_j)_{il} - 4{h^2} (\nabla^{2}F_j(x))_{il} | &\leq  \frac{1}6L_jh^3\left(\|e_{i}+e_l\|^3+\|e_{i}-e_l\|^3+\|e_{l}-e_i\|^3+\|e_{i}+e_l\|^3\right)\\
&\leq  \frac{8}3L_jh^3.
\end{align*}
For $A_j$ as in \eqref{eq:2.14899008745},
dividing the last inequality by $4{h^2}$, we obtain
\begin{equation}\label{eq:890109}
 |(A_j)_{il}-[\nabla^{2}F_j(x)]_{il}|\leq\dfrac{2L_j}{3}h.
\end{equation}
Using similar arguments, it can be proven that this inequality also holds for $A_j$ as defined in \eqref{eq:2.148990}; see, for example, \cite[Lemma~3.3]{Geovanizero}.
Finally, combining the last inequality and (\ref{eq:2.1654}), we get
\begin{equation*}
\|E^H_{j}(x)\|=\|\nabla^2 \bar F_j(x)-\nabla^{2}F_j(x)\|\leq\|A_j-\nabla^{2}F_j(x)\|\leq n\|A_j-\nabla^{2}F_j(x)\|_{\max}\leq \dfrac{2nL_j}{3}h,
\end{equation*}
which proves the inequality in \eqref{eq:2.17mais34}.

The second statement of the lemma trivially follows from the first one and the definition of $h$ and~$L$.

\bibliographystyle{siamplain}
\bibliography{Referencias}

\end{document}